\title{Divergence of CAT(0) Cube Complexes and \\Coxeter Groups}
\author{Ivan Levcovitz}
\date{}
\newtheorem{theorem}{Theorem}[section]
\newtheorem{lemma}[theorem]{Lemma}
\newtheorem{corollary}{Corollary}[theorem]
\newtheorem{subclaim}[theorem]{Sub-Claim}
\theoremstyle{definition}
\newtheorem{definition}[theorem]{Definition}
\theoremstyle{remark}
\newtheorem{remark}{Remark}[theorem]
\long\def\Restate#1#2#3#4{
\medskip\par\noindent
{\bf #1 \ref{#2} #3} {\it #4}\par\medskip }
\long\def\Doublestate#1#2#3#4{
\medskip\par\noindent
{\bf #1 \ref{#2} and \ref{#3}} {\it #4}\par\medskip }
\begin{document}
\maketitle

\abstract{We provide geometric conditions on a pair of hyperplanes of a CAT(0) cube complex that imply divergence bounds for the cube complex. As an application, we classify all right-angled Coxeter groups with quadratic divergence and show right-angled Coxeter groups cannot exhibit a divergence function between quadratic and cubic. This generalizes a theorem of Dani-Thomas that addressed the class of $2$--dimensional right-angled Coxeter groups. As another application, we provide an inductive graph theoretic criterion on a right-angled Coxeter group's defining graph which allows us to recognize arbitrary integer degree polynomial divergence for many infinite classes of right-angled Coxeter groups. We also provide similar divergence results for some classes of Coxeter groups which are not right-angled.}

\section{Introduction}
In this article we study the divergence of CAT(0) cube complexes and apply our results to the class of right-angled Coxeter groups. We also provide results regarding the divergence of Coxeter groups which are not right-angled. 

Given a metric space $X$ and a positive number $r$, the divergence function $Div(X, r) = Div(X)$ is the supremum over all lengths of minimal paths, which avoid a ball of radius $r$, connecting two points that are distance roughly $r$ apart. One may roughly think of the divergence function as a measure of the best upper bound on the rate a pair of geodesic rays can stray apart from one another. For a finitely generated group $G$, $Div(G)$ is the divergence function applied to the Cayley graph of $G$ endowed with the word metric.

Groups which are $\delta$--hyperbolic all exhibit at least exponential divergence. On the other end of the spectrum, $\mathbb{Z}^n$ displays linear divergence for $n \ge 2$. Gromov conjectured that spaces of nonpositive curvature should exhibit either linear or at least exponential divergence \cite{Gro2}. To the contrary, many important classes of groups, several of which are CAT(0), contain groups of quadratic divergence. For instance, the class of $3$--manifold groups \cite{Ger, KL}, the mapping class group of a closed surface of genus $g \ge 2$ \cite{Beh}, right-angled Artin groups \cite{BC} and right-angled Coxeter groups \cite{DT} have been shown to contain groups of quadratic divergence.  

More recently, there have been some examples of CAT(0) groups exhibiting polynomial divergence of any positive integer degree \cite{BD} \cite{BeHa} \cite{DT} \cite{Mac}. Additionally, there are even constructions of more exotic infinitely presented groups (not necessarily CAT(0)) with divergence function not a polynomial \cite{GS} \cite{OOS}. 

Right-angled Artin groups have played a central role in contemporary mathematics (see for example \cite{Wise} \cite{Ago}). In terms of their divergence, these groups satisfy a certain trichotomy: each right-angled Artin group either exhibits linear, quadratic or infinite divergence with these occurrences classified by simple properties of the group's defining graph \cite{BC}. Fundamental groups of $3$--manifolds exhibit a similar trichotomy as well \cite{Ger2}.  

Right-angled Coxeter groups form an important class of groups which act geometrically on CAT(0) cube complexes. Associated to any simplicial graph $\Gamma$ is a right-angled Coxeter group, $W_{\Gamma}$, whose presentation consists of an order 2 generator for each vertex of $\Gamma$ with the relation that two generators commute if there is an edge between the corresponding vertices of $\Gamma$.  It is true that every right-angled Artin group is finite index in some right-angled Coxeter group \cite{DJ}, therefore as divergence is a quasi-isometry invariant, the class of right-angled Coxeter groups contains groups of linear, quadratic and infinite divergence as well. However, even more is true for these groups.

For any positive integer degree, Dani-Thomas surprisingly provide an example of a $2$--dimensional right-angled Coxeter group exhibiting polynomial divergence of the given degree \cite{DT}. This raises the question of which divergence functions are possible for right-angled Coxeter groups. Additionally, there is the question of which properties of right-angled Coxeter groups give rise to their broader spectrum of divergence functions and how can these properties be recognized through these groups' defining graphs. 

The class of groups that act geometrically on a CAT(0) cube complex is vast and includes both right-angled Artin groups and right-angled Coxeter groups. More generally, we ask which properties of CAT(0) cube complexes give rise to different divergence functions. 

The Rank Rigidity Theorem shows the existence of a rank one isometry in the automorphism group of an irreducible, essential, locally compact CAT(0) cube complex with cocompact automorphism group \cite{CS} (the result actually holds under more general assumptions as well). As a consequence, the divergence of these spaces is either linear or at least quadratic \cite{Hag2}. 

We prove that for the case of right-angled Coxeter groups, there is an additional gap between quadratic and cubic divergence, and we classify exactly which right-angled Coxeter groups exhibit quadratic divergence. 

\Restate{Theorem}{racg_quadratic_div_theorem}{}
{
Suppose the graph $\Gamma$ is not a non-trivial join. The right-angled Coxeter group $W_{\Gamma}$ exhibits quadratic divergence if and only if $\Gamma$ is CFS. If $\Gamma$ is not CFS, then the divergence of $W_{\Gamma}$ is at least cubic. 
}

The CFS condition (``constructed from squares'') is a purely graph-theoretic condition which can be computationally checked. We note that the divergence of $W_{\Gamma}$ is linear if and only if $\Gamma$ is a non-trivial join \cite{BFHS}. For the case when $\Gamma$ does not contain triangles, the above theorem is a result of Dani-Thomas \cite{DT}. Such groups are precisely those whose Davis complex, a natural CAT(0) space a Coxeter group acts on, is $2$--dimensional. Theorem \ref{racg_quadratic_div_theorem} thus generalizes Dani-Thomas's result to right-angled Coxeter groups of arbitrary dimension. 
 
An important application of Theorem \ref{racg_quadratic_div_theorem} is to the theory of random right-angled Coxeter groups. Let $\Gamma(n, p(n))$ be a random $n$-vertex graph containing an edge between a given pair of vertices with probability $p(n)$. A random right-angled Coxeter group is simply the right-angled Coxeter group defined by a random graph. Behrstock--Falgas-Ravry--Hagen--Susse \cite{BFHS} give a threshold theorem for when a random graph is CFS with probability 1. Combining their result with Theorem \ref{racg_quadratic_div_theorem}, we obtain a threshold function for the transition between quadratic to at least cubic divergence in random right-angled Coxeter groups. 

\begin{theorem}[Behrstock--Falgas-Ravry--Hagen--Susse, Levcovitz] \label{random_racg_theorem}
	Suppose $p(n)$ is a probability density function bounded away from 1 and let $\epsilon > 0$. Let $\Gamma = \Gamma(p(n), n)$ be a random graph. If $p(n) > n^{-\frac{1}{2} + \epsilon}$, then the right-angled Coxeter group $W_{\Gamma}$ asymptotically almost surely exhibits quadratic divergence. If $p(n) < n^{-\frac{1}{2} - \epsilon}$, then $W_{\Gamma}$ asymptotically almost surely exhibits at least cubic divergence. 
\end{theorem}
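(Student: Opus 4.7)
The plan is to combine the BFHS threshold theorem for a random graph being CFS with Theorem \ref{racg_quadratic_div_theorem}. First I would invoke the Behrstock--Falgas-Ravry--Hagen--Susse result: when $p(n) > n^{-1/2 + \epsilon}$, the random graph $\Gamma(n, p(n))$ is CFS asymptotically almost surely, and when $p(n) < n^{-1/2 - \epsilon}$, it is a.a.s.\ not CFS. To feed this into Theorem \ref{racg_quadratic_div_theorem}, I need the additional hypothesis that $\Gamma$ is not a non-trivial join, and this must be verified in both regimes.

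To handle the non-join hypothesis, I would use the standard characterization that $\Gamma$ is a non-trivial join if and only if its complement $\overline{\Gamma}$ is disconnected. Since $p(n)$ is bounded away from $1$, the edge probability $1 - p(n)$ of $\overline{\Gamma}$ is bounded below by a positive constant, and so $\overline{\Gamma}$ is an Erd\H{o}s--R\'enyi random graph well above the connectivity threshold $\tfrac{\log n}{n}$. Classical random graph theory then gives that $\overline{\Gamma}$ is a.a.s.\ connected, hence $\Gamma(n, p(n))$ is a.a.s.\ not a non-trivial join.

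Combining the two a.a.s.\ events, in the dense regime $p(n) > n^{-1/2+\epsilon}$ the graph $\Gamma$ is a.a.s.\ both CFS and not a non-trivial join, so Theorem \ref{racg_quadratic_div_theorem} yields that $W_{\Gamma}$ has quadratic divergence. In the sparse regime $p(n) < n^{-1/2-\epsilon}$, $\Gamma$ is a.a.s.\ not CFS and still a.a.s.\ not a non-trivial join, so Theorem \ref{racg_quadratic_div_theorem} gives divergence at least cubic.

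The main obstacle is essentially just bookkeeping: one must verify that two a.a.s.\ events can be intersected (which is immediate, since finite intersections of a.a.s.\ events are a.a.s.) and that the hypothesis $p(n)$ bounded away from $1$ really is needed to suppress joins. The genuine content — the CFS threshold of \cite{BFHS} and the divergence gap of Theorem \ref{racg_quadratic_div_theorem} — has already been established elsewhere, so no further geometric work is required here.
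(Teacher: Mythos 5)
Your proof is correct and takes essentially the same route the paper intends: invoke the Behrstock--Falgas-Ravry--Hagen--Susse threshold for the CFS property and feed the result into Theorem~\ref{racg_quadratic_div_theorem}. The paper does not explicitly verify the non-join hypothesis; your observation that $\Gamma^{c}$ is a.a.s.\ connected because $1-p(n)$ is bounded below (so $\Gamma$ is a.a.s.\ not a non-trivial join, and this is precisely where the ``$p(n)$ bounded away from $1$'' hypothesis is used) is a detail the paper leaves implicit, and you fill it in correctly.
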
 

We note that a random right-angled Coxeter group given as above asymptotically almost surely has dimension larger than two. Thus the generality of Theorem \ref{racg_quadratic_div_theorem} is needed to obtain Theorem \ref{random_racg_theorem}.  

\textit{Strongly thick metric spaces of order $d$} form an important class of spaces which can be constructed through a $d$--step inductive gluing procedure, with initial pieces of linear divergence. An important consequence is that these spaces must have divergence bounded above by a polynomial of degree $d+1$ \cite{BD}. There are not many general results in the opposite direction giving lower bounds on divergence, and a goal of this article is to introduce criteria which imply such lower bounds. These criteria then allow us to give the exact divergence, up to an equivalence of functions, for many spaces. 

We apply the following strategy to study the divergence in CAT(0) cube complexes. First, we define the \textit{hyperplane divergence function}, $HDiv$, that given a pair of non-intersecting hyperplanes, gives the length of a shortest path between these hyperplanes that avoids a ball of radius $r$ about a basepoint. We then give conditions on a pair of non-intersecting hyperplanes that imply a lower bound on their corresponding hyperplane divergence function. The proofs for these lower bounds involve the use of disk diagrams. Finally, we show how the hyperplane divergence function for a pair of such hyperplanes actually implies a lower bound on the divergence of the entire CAT(0) cube complex: 

\Restate{Theorem}{chaining_hyperplanes_theorem}{} 
{
Let $X$ be an essential, locally compact CAT(0) cube complex with cocompact automorphism group. Suppose $HDiv(\mathcal{Y}, \mathcal{Z}) \succeq F(r)$ for a pair of non-intersecting hyperplanes $\mathcal{Y}$ and $\mathcal{Z}$ in $X$. It then follows that $Div(X) \succeq rF(r)$. 
}

Consequently, this process reduces the problem of finding a lower bound on divergence to finding a pair of hyperplanes with certain separation properties. Through this strategy, we prove the following theorem which gives lower bounds on divergence as a consequence of the existence of certain types of pairs of non-intersecting hyperplanes (these hypotheses on hyperplanes are defined in Section \ref{section_div_cube_complex}).

\Doublestate{Theorem}{div_bounds_theorem}{higher_degree_div_theorem}
{
Suppose $X$ is an essential, locally compact CAT(0) cube complex with cocompact automorphism group. Let $\mathcal{Y}$ and $\mathcal{Z}$ be non-intersecting hyperplanes in $X$.  
\begin{enumerate}[resume]
\item If $\mathcal{Y}$ and $\mathcal{Z}$ are $k$--separated, then $Div(X)$ is bounded below by a quadratic function. 
\item If $\mathcal{Y}$ and $\mathcal{Z}$ are $k$--chain separated, then $Div(X) \succeq \frac{1}{2}  R^2 \log_2(\log_2(R))$.
\item If $X$ contains a pair of degree $d$ $k$--separated hyperplanes, then $Div(X)$ is bounded below by a polynomial of degree $d+1$. \label{intro_div_thm3}
\item Suppose $X$ has $k$--alternating geodesics. If $\mathcal{Y}$ and $\mathcal{Z}$ are symbolically $k$--chain separated then $Div(X)$ is bounded below by a cubic function. \label{intro_div_thm4}
\end{enumerate}
}

The aforementioned characterization of quadratic divergence in right-angled Coxeter groups is an application of \ref{intro_div_thm4} above. Furthermore, as an application of \ref{intro_div_thm3} we give graph-theoretic criteria which imply polynomial lower bounds on divergence of right-angled Coxeter groups. 

\Restate{Theorem}{higher_degree_div_racg}{}
{
Suppose the graph $\Gamma$ contains a rank $n$ pair $(s, t)$, then $Div(W_{\Gamma})$ is bounded below by a polynomial of degree $n+1$. 
}

Here a rank $n$ pair, where $n$ is any nonnegative integer, is a pair of non-adjacent vertices of $\Gamma$ that satisfy a certain inductive graph-theoretic criterion. The above theorem together with the machinery of thickness (see \cite{BHSC} and \cite{Lev2}), provide exact bounds on the divergence of a wide range of right-angled Coxeter groups. The above theorem, in particular, applies to the examples given in \cite{DT}. However, this theorem is still not sufficient to characterize divergence in RACGs as demonstrated by Remark \ref{cubic_counterexample_rmk}.

Finally, we explore the divergence in the setting of Coxeter groups (not necessarily right-angled). Given an edge-labeled simplicial graph $\Gamma$ there is a corresponding Coxeter group $W_{\Gamma}$. An adaptation of our techniques allows us to prove results in this general case. For instance, we provide the following polynomial lower bound.

\Restate{Theorem}{coxeter_higher_div_thm}{}
{
Let $\Gamma$ be an even triangle-free Coxeter graph. Suppose $(u, v)$ is a rank $n$ pair, then the divergence of the Coxeter group $W_{\Gamma}$ is bounded below by a polynomial of degree $n+1$. 
}

By the above theorem and the results from \cite{BHSC}, for any positive integer degree, we can conclude there are infinite classes of Coxeter groups which are not right-angled and which have polynomial divergence of that degree. This shows the existence of higher degree polynomial divergence in the general class of Coxeter groups is abundant.

Theorem \ref{coxeter_higher_div_thm} is actually proven in a more general setting as we only need $\Gamma$ to be triangle-free and even for some neighborhood of the vertex $u$. For a precise statement see Theorem \ref{coxeter_higher_div_thm} of Section \ref{section_coxeter_groups}. For an edge-labeled graph $\Gamma$ representing a Coxeter group, we let $\hat{\Gamma}$ denote the graph obtained by collapsing odd labeled edges of $\Gamma$ to a point. We prove the following:

\Restate{Theorem}{coxeter_quadratic_thm}{}
{Let $\Gamma$ be a Coxeter graph. If the diameter of $\hat{\Gamma}$ is larger than 2, then $W_{\Gamma}$ has at least quadratic divergence. 
}

In particular, the above theorem shows that if $W_{\Gamma}$ is an even Coxeter group where $\Gamma$ has diameter larger than 2, then the divergence of $W_{\Gamma}$ is at least quadratic. 

The paper is organized as follows. Section \ref{section_background} provides general background material, including a background on divergence, CAT(0) cube complexes and Coxeter groups. Section \ref{section_disk_diagrams} provides necessary background on disk diagrams in CAT(0) cube complexes. In this section we set the notation and results regarding disk diagram structures that are used throughout the article. 

In Section \ref{section_separation_props}, we introduce several notions of separation for a pair of non-intersecting hyperplanes in a CAT(0) cube complex. The consequences of these separation properties on the divergence of a CAT(0) cube complex are explored in Section \ref{section_div_cube_complex} and \ref{section_higher_bounds}. The hyperplane divergence function is defined there as well. 

In Section \ref{section_racg_divergence} we apply the results obtained for CAT(0) cube complexes to the setting of right-angled Coxeter groups. Finally, in Section \ref{section_coxeter_groups}, we explore the divergence of Coxeter groups which are not necessarily right-angled. 

\vspace{.2in}

\noindent \textbf{Acknowledgements:} I would like to especially thank my advisor Jason Behrstock for countless discussions and guidance in writing this paper. I would also like to thank Mark Hagen for many useful suggestions and pointing me in the right directions in the world of CAT(0) cube complexes. Finally, I am also grateful to the anonymous referee for the many helpful suggestions and corrections.

\section{Background} \label{section_background}

Given a metric space $X$, we will always use $B_p(r)$ to denote the ball of radius $r$ about the point $p \in X$. 

\begin{definition}[quasi-isometry] \label{quasi_isometry_def}

Let $X$ and $Y$ be metric spaces. A \textit{($k$, $c$)--quasi-isometry} is a not necessarily continuous map $f: X \to Y$, such that for all $a, b \in X$ we have:
\[ \frac{1}{k}d_X(a,b) - c \le d_Y(f(a), f(b)) \le kd_X(a,b) + c \]

\end{definition}

Quasi-isometries provide a natural notion of equivalence in a coarse geometric setting. For a detailed background on quasi-isometries and geometric group theory in general see \cite{BrHa}.

\subsection{Divergence}

	Let $X$ be a metric space. Fix constants $0 < \delta \le 1$, $\lambda \ge 0$ and consider the linear function $\rho(r) = \delta r - \lambda$. Let $a, b, c \in X$ and set $k = d(c, \{ a, b \})$.  
	
	\[ div_{\lambda}(a, b, c, \delta) \] is the length of the shortest path in $X$ from $a$ to $b$ which avoids the ball $B_c(\rho(k))$. 
	
	\[ Div_{\lambda}^X(r, \delta) \] 
	
	is the supremum of $div_{\lambda}(a, b, c, \delta)$ over all $a, b, c$ with $d(a, b) \le r$. We will often suppress $\delta$ and $\lambda$ from the notation and say the \textit{divergence} of $X$ is the function $Div(X) = Div_{\lambda}^X(r, \delta)$. 

We set $f(r) \asymp g(r)$ if there exists a $C$ such that: 
	
	 \[ \frac{1}{C} g(\frac{r}{C}) - Cr - C < f(r) < Cg(Cr) + Cr + C \] 
	 
Up to this equivalence relation on functions and under mild assumptions on the metric space, divergence is a quasi-isometry invariant. See Lemma 3.4 \cite{DMS} for the relevant hypotheses. For instance, the Cayley graph of a finitely generated group  satisfies such hypotheses. 

\begin{remark}
	Let $\gamma \subset X$ be a bi-infinite geodesic with base point $b$. The divergence of $\gamma$ is a function in $r$ whose values are the supremum of $div_{\lambda}(\gamma(r), \gamma(-r), b, \delta)$. Under the above equivalence relation on functions, this definition is well-defined regardless of the choice of base point. It follows the divergence of $X$ is bounded below by the divergence of $\gamma$.
\end{remark}

\subsection{CAT(0) Cube Complexes}

A \textit{CAT(0) cube complex}, $X$, is a simply connected cell complex whose cells consist of Euclidean unit cubes, $[ -\frac{1}{2}, \frac{1}{2}]^d$, of varying dimension $d$. Additionally, the link of each vertex is a flag complex (i.e., any set of vertices which are pairwise connected by an edge, span a simplex). A CAT(0) cube complex with the induced metric is a CAT(0) space (see \cite{CS} and \cite{Wise} for more detailed references). We say $X$ is \textit{finite-dimensional} if there is an upper bound on the dimension of cubes in $X$. 

A \textit{midcube} $Y \subset [ -\frac{1}{2}, \frac{1}{2}]^d$ is the restriction of a coordinate to 0. A \textit{hyperplane} $\mathcal{H} \subset X$ is a connected subspace with the property that for each cube $C$ in $X$, $\mathcal{H} \cap C$ is a midcube or $\mathcal{H} \cap C = \emptyset$. It follows $X - \mathcal{H}$ consists of exactly two distinct components. A \textit{half-space} is the closure of such a component. We denote the two half-spaces associated to $\mathcal{H}$ by $\mathcal{H}^+$ and $\mathcal{H}^-$. The carrier of a hyperplane, $N(\mathcal{H})$, is the set of all cubes in $X$ which have non-trivial intersection with $\mathcal{H}$. 

A CAT(0) cube complex $X$ is \textit{essential} if all its half-spaces contain arbitrarily large balls of $X$. If $X$ is one-ended and essential, then every hyperplane is unbounded.

We will work exclusively with the combinatorial metric on the $1$--skeleton of $X$. A \textit{combinatorial geodesic} is a geodesic in the $1$--skeleton of $X$ under this metric and a \textit{combinatorial path} is a path in the $1$--skeleton of $X$. We often drop the word ``combinatorial'' from these definitions. The following core lemmas, whose proofs are found in \cite{CS}, are used throughout this paper. 

\begin{lemma} \label{qi_combinatorial_lemma}
Suppose $X$ is a finite-dimensional CAT(0) cube complex. $X$ is quasi-isometric to its 1--skeleton endowed with the combinatorial metric. 
\end{lemma}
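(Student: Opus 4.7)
The plan is to show that the identity on vertices extends to a quasi-isometry between $(X^{(1)}, d_1)$ and $(X, d_{\mathrm{CAT}(0)})$, where $d_1$ is the combinatorial (graph) metric and $d_{\mathrm{CAT}(0)}$ is the piecewise-Euclidean metric. Let $d = \dim(X) < \infty$. I would first dispense with quasi-density: any point of $X$ lies in some cube of dimension at most $d$, and such a cube has CAT(0) diameter $\sqrt{d}$, so every point of $X$ is within CAT(0) distance $\sqrt{d}/2$ of some vertex. Thus it suffices to produce constants $K, C$ with
\[ \tfrac{1}{K} d_1(v,w) - C \le d_{\mathrm{CAT}(0)}(v,w) \le K d_1(v,w) + C \]
for all vertices $v, w \in X^{(0)}$.

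The upper bound is immediate: any edge path in $X^{(1)}$ realizing $d_1(v,w)$ is a piecewise-geodesic path in $X$ of length $d_1(v,w)$, so $d_{\mathrm{CAT}(0)}(v,w) \le d_1(v,w)$. For the lower bound I would use the hyperplane characterization of the combinatorial distance: $d_1(v,w)$ equals the number $N(v,w)$ of hyperplanes separating $v$ from $w$. Any continuous path from $v$ to $w$ (in particular, the CAT(0) geodesic $\gamma$) must cross each of these hyperplanes. So it is enough to produce a constant $K = K(d)$ such that any CAT(0) rectifiable path of length $L$ in $X$ crosses at most $K L + K$ hyperplanes.

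To establish this crossing-rate bound, I would localize to cubes. The CAT(0) geodesic $\gamma$ passes through a sequence of closed cubes $C_1, \ldots, C_m$, and inside each $C_i$ it is a Euclidean straight segment of some length $L_i$ with $\sum L_i = L$. Inside a cube of dimension $\le d$ the midcubes are coordinate hyperplanes, and a Euclidean straight segment of length $\ell$ can cross at most $d \cdot \ell \cdot \sqrt{d} + d$ such midcubes (since projection to any coordinate axis has length at most $\ell$, and traversing a unit of one coordinate requires moving at least distance $1$ in CAT(0) metric — more carefully, one uses that the $\ell^1$ and $\ell^2$ norms on $\mathbb{R}^d$ differ by a factor of at most $\sqrt{d}$). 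Summing over the cubes $C_i$ gives a bound of the form $K(d)(L+1)$ on the total number of hyperplane crossings, which is the desired estimate.

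The main obstacle is the last step, pinning down the constant in the per-cube hyperplane-crossing bound, because a CAT(0) geodesic can enter and leave a cube through its boundary many times in principle; however, inside each \emph{maximal} Euclidean segment the count is controlled by the $\ell^1$-vs-$\ell^2$ comparison, and finite-dimensionality guarantees that the number of hyperplanes met is proportional to the CAT(0) length. Combined with the hyperplane characterization of $d_1$, this produces the required quasi-isometry constants.
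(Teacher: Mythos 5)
The paper itself does not prove this lemma; it simply cites Caprace--Sageev, so your proposal has to stand on its own. The overall framing is right (quasi-density is trivial, the upper bound $d_{\mathrm{CAT}(0)}\le d_1$ is immediate, and the content is the lower bound, which you correctly reduce via the hyperplane characterization $d_1(v,w)=N(v,w)$ to a crossing-rate estimate), but the per-cube accounting in your last step has a genuine gap. Inside a single unit $d$-cube there are only $d$ midcubes, and a straight segment can cross \emph{all} of them with arbitrarily small Euclidean length (take a segment through a tiny neighborhood of the center in the direction $(1,\dots,1)$). So the only uniformly valid per-cube bound on midcube crossings is the additive constant $d$, not anything proportional to $L_i$: your ``$d\cdot\ell\cdot\sqrt d + d$'' is dominated by its $+d$ term. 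Summing over the cubes $C_1,\dots,C_m$ then yields $O(L) + d\cdot m$, and the number $m$ of cubes the geodesic traverses is not a priori bounded linearly in $L$ (indeed $X$ need not even be locally finite, and controlling $m$ is exactly the nontrivial content you would need). Your closing sentence explicitly flags this difficulty but only gestures at a resolution; as written the argument does not close.

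A cleaner way to finish that bypasses the cube-by-cube sum entirely: for each hyperplane $\mathcal H$ crossed by the CAT(0) geodesic $\gamma$, the normalized transverse coordinate $\phi_{\mathcal H}\colon X\to[-\tfrac12,\tfrac12]$ (constant $\pm\tfrac12$ off the carrier, linear across it) is $1$-Lipschitz for the CAT(0) metric, and $\phi_{\mathcal H}$ takes the values $-\tfrac12$ and $+\tfrac12$ at the two endpoints, so $\gamma$ spends length at least $1$ inside $N(\mathcal H)$. By finite-dimensionality, every point of $X$ lies in the interior of the carrier of at most $d$ hyperplanes, so the total time $\gamma$ spends in carriers of crossed hyperplanes is at most $dL$. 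Hence at most $dL$ hyperplanes are crossed, giving $d_1(v,w)\le d\, d_{\mathrm{CAT}(0)}(v,w)$ and completing the quasi-isometry. This replaces the $\ell^1$-vs-$\ell^2$ bookkeeping (which is fine in spirit but hard to localize to cubes) with a global ``carrier overlap'' count.
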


\begin{lemma} \label{ramsey_lemma}
Suppose $X$ is a finite-dimensional CAT(0) cube complex. For each $k > 0$, there exists a number $N(k)$ such that any combinatorial geodesics of length $N(k)$ in $X$ must cross a set of pairwise non-intersecting hyperplanes $\{\mathcal{H}_1, ..., \mathcal{H}_k \}$. 

\end{lemma}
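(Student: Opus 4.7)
The plan is to combine two well-known structural facts about CAT(0) cube complexes with a direct Ramsey-theoretic argument. First I would recall that a combinatorial geodesic $\gamma$ in a CAT(0) cube complex crosses each hyperplane at most once; consequently a geodesic of length $L$ determines exactly $L$ distinct hyperplanes, one dual to each edge. Second, I would use that if $X$ is $d$-dimensional, then any family of pairwise intersecting hyperplanes has cardinality at most $d$: the hyperplanes of a CAT(0) cube complex enjoy the Helly property, so a collection of $m$ pairwise intersecting hyperplanes has a common intersection containing a cube of dimension $m$, forcing $m \le d$.

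With these two ingredients in hand, the lemma reduces to a Ramsey argument. I would set $N(k) := R(k, d+1)$, where $R$ denotes the classical Ramsey number, and consider an arbitrary combinatorial geodesic of length $N(k)$. Labeling the $N(k)$ hyperplanes it crosses by $\mathcal{H}_1, \dots, \mathcal{H}_{N(k)}$, I would form the auxiliary graph $G$ on these hyperplanes with an edge between $\mathcal{H}_i$ and $\mathcal{H}_j$ if and only if $\mathcal{H}_i \cap \mathcal{H}_j \ne \emptyset$. By the definition of $R(k,d+1)$, the graph $G$ contains either a clique of size $d+1$ or an independent set of size $k$. A clique of size $d+1$ would contradict the dimension bound from the Helly property, so instead one obtains an independent set of size $k$, which is exactly a family of $k$ pairwise non-intersecting hyperplanes crossed by $\gamma$.

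The main obstacle, such as it is, is conceptual rather than technical: recognizing that the problem is essentially a two-coloring Ramsey question once the at-most-one-crossing and Helly properties are invoked. No delicate estimates on $N(k)$ are required, and any reasonable upper bound on the Ramsey number $R(k,d+1)$ yields an explicit (though not optimal) value for $N(k)$ as a function of $k$ and $\dim(X)$.
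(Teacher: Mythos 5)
Your proposal is correct, and it is essentially the standard argument: the paper itself does not prove this lemma but cites \cite{CS} for it, and the proof there is exactly the Ramsey-theoretic reduction you describe (a geodesic crosses each hyperplane at most once, pairwise intersecting hyperplanes in a $d$-dimensional complex number at most $d$ by the Helly property, so $N(k) = R(k, d+1)$ forces an independent set of size $k$). Nothing is missing.
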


\subsection{Coxeter Groups}

We only give a brief background on Coxeter groups. For an extensive background we refer the reader to \cite{BB} and \cite{Dav}.

In this paper $\Gamma$ will always denote a simplicial graph (usually a Coxeter graph). The vertex set and edge set of $\Gamma$ are respectively denoted by $V(\Gamma)$ and $E(\Gamma)$. A \textit{clique} in $\Gamma$ is a subgraph whose vertices are all pairwise adjacent.  A $k$--clique is a clique with $k$ vertices. 

For $s \in \Gamma$, the \textit{link of $s$}, $Link(s) \subset V(\Gamma)$, is the set of vertices in $\Gamma$ connected to $s$ by an edge. The \textit{star of $s$} is the set $Star(s) = Link(s) \cup s$.

A Coxeter group is defined by the presentation: 
\[W = \langle s_1,s_2,...,s_n | (s_is_j)^{m(s_i,s_j)} = 1 \rangle \]
where $m(s_i,s_i) = 1$ and $m(s_i, s_j) = m(s_j, s_i) \in \{2,3,...,\infty\}$ when $i \ne j$. If $m(s_i, s_j) = \infty$ then no relation of the form $(s_is_j)^m=1$ is imposed. 

Given a presentation for a Coxeter group, there is a corresponding labeled \textit{Coxeter graph} $\Gamma$. The vertices of $\Gamma$ are elements of $S$. There is an edge between $s_i$ and $s_j$ if and only if $m(s_i,s_j) \ne \infty$. This edge is labeled by $m(s_i, s_j)$ if $m(s_i,s_j) \ge 3$. If $m(s_i, s_j) = 2$ no label is placed on the corresponding edge. Conversely, given such an edge-labeled graph $\Gamma$, we have the Coxeter group $W_{\Gamma}$. 

In the literature, there are many different conventions for associating a graph to a Coxeter group presentation. The given convention was chosen to make the theorems in this paper easier to state. 

A \textit{right-angled Coxeter group (RACG)} is a Coxeter group with generating set $S$ where $m(s, t) \in \{ \infty, 2 \}$ for $s, t$ distinct elements in S. An \textit{even Coxeter group} is a Coxeter group given by a Coxeter graph where each edge either has an even label or no label. 

We will often want to consider subgroups of a Coxeter group $W_{\Gamma}$ corresponding to subgraphs of $\Gamma$. The full subgraph of $T \subset V(\Gamma)$ is the graph with vertex set $T$ with a labeled edge $(t_1, t_2)$ if and only if $(t_1, t_2)$ is an edge of $\Gamma$ with the same label.

\begin{definition} [Induced Subgroup] \label{induced_subgroup_def}
Let $(W,S)$ be a Coxeter group. For $T \subset S$, let $W_T$ be the subgroup of $W$ generated by the full subgraph of $T$. This subgroup is isomorphic to the Coxeter group corresponding to the subgraph of $\Gamma$ induced by $T$ (see \cite{BB} for instance).
\end{definition}

The \textit{Davis complex}, $\Sigma_{\Gamma}$, is a natural CAT(0) cell complex which the Coxeter group $W_{\Gamma}$ acts geometrically. In this paper, we will only make use of the Davis complex for right-angled Coxeter groups. 

Suppose $W_{\Gamma}$ is a RACG.  For every $k$--clique, $T \subset \Gamma$, the induced subgroup $W_T$ is isomorphic to the direct product of $k$ copies of $\mathbb{Z}_2$. Hence, the Cayley graph of $W_T$ is isomorphic to a $k$--cube. The Davis complex $\Sigma_{\Gamma}$ has 1-skeleton the Cayley graph of $W_{\Gamma}$ where edges are given unit length. Additionally, for each $k$--clique, $T \subset \Gamma$, and coset, $gW_T$, we glue a unit $k$--cube to $gW_T \subset \Sigma_{\Gamma}$. It is then evident that the Davis Complex for a RACG is naturally a CAT(0) cube complex. 

\section{Disk Diagrams in CAT(0) Cube Complexes} \label{section_disk_diagrams}

A \textit{disk diagram} $D$ is a contractible finite $2$--dimensional cube complex with a fixed planar embedding $P: D \to \mathbb{R}^2$. The \textit{area} of $D$ is the number of $2$--cells it contains. By compactifying $\mathbb{R}^2$, $S^2 = \mathbb{R}^2 \cup \infty$, we can extend $P: D \to S^2$, giving a cellulation of $S^2$. The \textit{boundary path of} $D$, $\partial D$, is the attaching map of the cell in this cellulation containing $\infty$. Note that this is not necessarily the topological boundary.

Let $X$ be a CAT(0) cube complex. We say $D$ is a \textit{disk diagram in} $X$, if $D$ is a disk diagram and there is a fixed continuous combinatorial map of cube complexes $F: D \to X$. By a lemma of Van Kampen, for every null-homotopic closed combinatorial path $p: S^{1} \to X$, there exists a disk diagram $D$ in $X$ such that $\partial D = p$. 

Suppose $D$ is a disk diagram in a CAT(0) cube complex $X$ and $t$ is a $1$--cell of $D$. A \textit{dual curve $H$ dual to $t$}, is a concatenation of midcubes in $D$ which contains a midcube in $D$ which intersects $t$. The image of $H$ under the map $F: D \to X$ lies in some hyperplane $\mathcal{H} \subset X$. We also have that every edge in $D$ is dual to exactly one maximal dual curve. 

\textbf{In our notation, we denote a dual curve by a capital letter and its corresponding hyperplane by the corresponding script letter.}

An \textit{end} of a dual curve $H$ in $D$ is a point of intersection of $H$ with $\partial D$. Maximal dual curves either have no ends or two ends. The \textit{carrier} $N(H)$ of $H$ is the set of 2-cubes in $D$ containing $H$.  

Suppose we have oriented combinatorial paths $p_1$, $p_2$, ..., $p_n$ in a disk diagram $D$ and that $p_i \cap p_{i+1} \ne \emptyset$ for $1 \le i < n$. We then define a new oriented path $p = p_1 * p_2 * ... * p_n$ by beginning at the first point of $p_1$, followed by $p_1$ until its first intersection with $p_2$; followed by $p_i$ until its first intersection with $p_{i+1}$. In this definition, we further assume the orientations are chosen such that this construction is possible (i.e., we can always follow $p_i$ along its orientation until its intersection with $p_{i+1}$). Furthermore, different choices of path orientations could produce a different paths. We note that this construction is only used in Lemma \ref{chain_separated_div_lemma} and Lemma \ref{symbolically_separated_div_lemma}, and the relevant path orientations there are given.

Many of the ideas in this section originated in \cite{Hag} and \cite{Wise}. For our purposes, we require some modified definitions and lemmas to those in the mentioned works. For completeness we include proofs of these modified claims, even though many of the arguments are very similar. 

To every closed loop formed by a concatenation of combinatorial paths and hyperplanes, we wish to associate a disk diagram with boundary path this loop. This notion is formally defined below. If such a diagram is chosen appropriately, the dual curves associated to it behave nicely. We call such nicely behaved diagrams \textit{combed} and define them later in this section.

\begin{definition} [Hyperplane-Path Sequence] \label{hyperplane_path_sequence_def}
In the following definition, we work modulo $n+1$ (i.e, $n+1 = 0$). Let $\bar{A} = \{A_0, A_1, ..., A_n \}$ be a sequence such that for each $i$, $A_i$ is either a hyperplane or an oriented combinatorial path in a CAT(0) cube complex $X$. Furthermore, for $0 \le i \le n$, if $A_i$ and $A_{i+1}$ are both hyperplanes then they intersect. If $A_i$ and $A_{i+1}$ are both combinatorial paths, then the endpoint of $A_i$ is the initial point of $A_{i+1}$. If $A_i$ is a hyperplane and $A_{i+1}$ is a path, then the beginning point of $A_{i+1}$ lies on $N(A_i)$. Similarly, if $A_i$ is a path and $A_{i+1}$ is a hyperplane, then the endpoint of $A_{i}$ lies on $N(A_{i+1})$. We call $\bar{A}$ a \textit{hyperplane-path sequence}. 

Given a hyperplane-path sequence $\bar{A} = \{A_0, ..., A_n \}$, let $\bar{P} = \{P_0, ..., P_n\}$ be a sequence of combinatorial paths where $P_i = A_i$ if $A_i$ is a combinatorial path and $P_i$ is a combinatorial geodesic in $N(A_i)$ if $A_i$ is a hyperplane. Furthermore, assume $P = P_0 * P_1 * ... * P_n$ defines a loop. A disk diagram  $D$ is \textit{supported by $\bar{A}$} if $\partial D = P$ for some choice of $\bar{P}$. Often this choice is given and we say \textit{$D$ is supported by $\bar{A}$ with boundary path $\bar{P}$}. For an example of a disk diagram supported by a hyperpane path sequence see Figure \ref{fig_k_separated_lemma}.

\end{definition}

\begin{remark}
Diagrams supported by a hyperplane-path sequence are a special case of diagrams with fixed carriers defined in \cite{Hag}. The difference is that consecutive hyperplanes in a hyperplane-path sequence must intersect, where in \cite{Hag} they either intersect or osculate. Most of this section can be modified to allow for osculating hyperplanes; however, there was no need for such sequences in this paper.  
\end{remark}

\begin{definition}[Nongons, Bigons, Monogons and Oscugons] \label{gons_def}
A \textit{nongon} is a dual curve of length greater than one which begins and ends on the same dual $1$--cell. A \textit{bigon} is a pair of dual curves which intersect at their first and last containing squares. A \textit{monogon} is a dual curve which intersects itself in its first and last square. An \textit{oscugon} is a dual curve which starts at the dual $1$--cell $e_1$, ends at the dual $1$--cell $e_2$, such that $e_1 \neq e_2$, $e_1 \cap e_2 \neq \emptyset$ and $e_1, e_2$ are not contained in a common square. A disk diagram \textit{without pathologies} is one that does not contain a nongon, bigon, monogon or oscugon. 
\end{definition}

The following is proved in \cite[Corollary~2.4]{Wise}: 

\begin{lemma}[\cite{Wise}] \label{no_monogons_lemma}
Suppose $D$ is a disk diagram in a CAT(0) cube complex $X$, then $D$ does not contain monogons. 	
\end{lemma}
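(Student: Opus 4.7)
The plan is to argue by minimal counterexample, exploiting the two-sidedness of hyperplanes in the CAT(0) cube complex $X$. Suppose, for contradiction, that there exists a disk diagram $D$ in $X$ containing a monogon, and choose $D$ together with a monogon $H \subset D$ so that the area of the subdisk $D_0 \subset D$ bounded by the closed midcube-loop $H$ (which exists by the Jordan curve theorem applied to the planar embedding $P$) is minimal. Under the combinatorial map $F: D \to X$, the dual curve $H$ projects into a single hyperplane $\mathcal{H} \subset X$, and since $X$ is CAT(0), $\mathcal{H}$ is two-sided, separating $X$ into half-spaces $\mathcal{H}^+$ and $\mathcal{H}^-$.

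The core step is to analyze dual curves meeting $D_0$. If $K$ is a maximal dual curve with $K \cap D_0 \neq \emptyset$, then the portion of $K$ inside $D_0$ cannot be a closed loop, since that would yield a smaller monogon and contradict the minimality of $D_0$. Hence $K$ restricted to $D_0$ is an arc with both endpoints on $\partial D_0$, which lies in the carrier $N(H)$. Such a $K$ together with a subarc of $H$ cuts $D_0$ into two strictly smaller subdisks, at least one of which either inherits a strictly smaller monogon (formed by gluing a half of $K$ to a subarc of $H$) or produces a bigon between $K$ and $H$; each case contradicts the minimality of $D_0$. If instead no dual curve meets the interior of $D_0$, then $D_0$ has no $2$--cells in its interior, and the inner boundary of $N(H)$ in $D_0$ is a loop in $D$ whose image under $F$ lies entirely on one side of $\mathcal{H}$. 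Tracking the half-spaces to which the vertices of this loop are sent under $F$, the closed structure of $H$ would force a vertex to lie simultaneously in both $\mathcal{H}^+$ and $\mathcal{H}^-$, contradicting the two-sided separation of $X$ by $\mathcal{H}$.

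The main technical obstacle will be making the step \emph{loop of $K$ plus subarc of $H$ yields a smaller monogon or bigon} fully rigorous: one must carefully track how $K$ and $H$ interact at crossings in $D_0$ and verify that the parities of intersections line up to produce a genuine smaller pathology rather than a trivial one. An alternative and possibly cleaner route is to use the fact that hyperplanes in CAT(0) cube complexes are themselves CAT(0) cube complexes, and hence simply connected: the loop $F(H)$ in $\mathcal{H}$ therefore bounds a disk in $\mathcal{H}$, and lifting this disk back to replace $H$'s interior in $D$ would produce a smaller disk diagram without the monogon, contradicting the minimality of $D$.
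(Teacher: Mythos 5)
The paper does not prove this itself; it cites \cite{Wise}, and the argument there is far more direct than what you propose. A monogon is a dual curve $H$ whose first and last midcubes lie in the \emph{same} square $S$ of $D$, so $H$ contains both midcubes of $S$. Since $F\colon D\to X$ restricts to an isomorphism on each cube, these two midcubes are carried to the two distinct midcubes of the $2$--cube $F(S)\subset X$, and in a CAT(0) cube complex the two midcubes of any square lie in two \emph{distinct} hyperplanes, because no hyperplane self-intersects. This contradicts the fact that the entire image of $H$ lies in the single hyperplane $\mathcal{H}$. The contradiction is visible at the one square $S$; no minimal-area induction, no disk replacement, and no half-space bookkeeping are needed. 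This is the structural difference between monogons and the other pathologies (bigons, nongons, oscugons), which genuinely require the replacement machinery of Lemma~\ref{wise_lemma} and are not impossible in an arbitrary diagram.

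Your minimal-counterexample route, besides being unnecessary, has real gaps. In the case where another dual curve $K$ enters $D_0$, the assertion that cutting along $K$ and a subarc of $H$ produces a smaller monogon or a bigon is not established, and you flag it yourself as the main obstacle; note also that obtaining ``a bigon'' would not immediately finish, since bigons are not a priori forbidden in the given diagram. In the case where no other dual curve enters $D_0$, the claim that $D_0$ has no interior $2$--cells is wrong: $D_0$ contains the carrier $N(H)$, whose $2$--cells carry dual curves transverse to $H$, so this case is not actually vacuous of squares; and the half-space tracking you sketch does not force any vertex of the inner rim of $N(H)$ into both halves of $\mathcal{H}$, since that rim stays on one fixed side. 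Your alternative via simple connectivity of $\mathcal{H}$ also does not close: $F(H)$ does bound a disk of midcubes in $\mathcal{H}$, but there is no canonical thickening of that disk into a replacement subdiagram of $D$ mapping to $X$, and you never set up a minimality hypothesis on the area of $D$ that such a replacement would violate. The idea you are missing is that a monogon already self-destructs locally, before any global reduction argument is invoked.
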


We now wish to discuss the idea of \textit{boundary combinatorics} in a disk diagram $D$. Suppose $D$ and $D'$ are two disk diagrams in a CAT(0) cube complex $X$. Let $p \subset \partial D$ and $p' \subset \partial D'$ be subcomplexes. We say $p$ and $p'$ are \textit{equal boundary complexes} if the canonical maps $p \to X$ and $p' \to X$ are the same combinatorial maps. 

Suppose $p \subset \partial D$ and $p' \subset \partial D'$ are equal boundary complexes, and let $i: p \to p'$ be the canonical isomorphism between them. We say $p$ and $p'$ have \textit{equal boundary combinatorics} if given any pair of edges $e_1, e_2 \in p$ dual to a common dual curve in $D$, it follows $i(e_1)$ and $i(e_2)$ are dual to a common dual curve in $D'$. 

In particular, $D$ and $D'$ have \textit{equal boundary} if $\partial D$ and $\partial D'$ are equal combinatorial complexes. Additionally, $D$ and $D'$ have \textit{equal boundary combinatorics} if $\partial D$ and $\partial D'$ have equal boundary combinatorics. 

The following is also proved in \cite[Lemma~2.3]{Wise}: 

\begin{lemma} [\cite{Wise}] \label{wise_lemma}
Let $D$ be a disk diagram in a CAT(0) cube complex $X$. There exists a disk diagram $D'$ in $X$  with no pathologies and equal boundary combinatorics to $D$.
\end{lemma}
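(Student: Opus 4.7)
The plan is to induct on the area of $D$. If $D$ has no pathologies, set $D' = D$. Otherwise, I will exhibit a surgery producing a disk diagram $\tilde{D}$ in $X$ of strictly smaller area that has equal boundary combinatorics to $D$; applying the inductive hypothesis to $\tilde{D}$ then yields the desired $D'$. Monogons are already excluded by Lemma \ref{no_monogons_lemma}, so only nongons, bigons, and oscugons need to be eliminated.

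For a bigon formed by dual curves $H_1, H_2$ meeting in squares $C_1, C_2$, I would first pass to an innermost such bigon, so that the subdiagram $E$ enclosed between the two intersections contains no further bigon in $H_1, H_2$ and no dual curve re-enters $E$. Since the hyperplanes $\mathcal{H}_1, \mathcal{H}_2 \subset X$ actually cross (as witnessed by the midcube image of $C_1$), one can perform the standard uncrossing: excise $E$ and reglue so that $H_1$ and $H_2$ meet only once, strictly reducing area. For a nongon, the closed dual curve $H$ bounds a subdiagram in $D$; since $H$ has no endpoints on $\partial D$, this subdiagram can be collapsed by identifying the two copies of the carrier of $H$, again reducing area. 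For an oscugon with dual $1$-cells $e_1, e_2$ sharing a vertex but not lying in a common square of $D$, the images $F(e_1), F(e_2)$ are distinct edges dual to a common hyperplane of $X$ and share a vertex in $X$, so the flag condition on the link of that vertex forces $F(e_1), F(e_2)$ to span a square in $X$. This square provides the local model for a surgery that replaces the initial portion of the dual curve by a shorter one, strictly decreasing area. In every case the surgery takes place in the interior of $D$, so $\partial D$ is unchanged and the pairing of boundary edges by dual curves is preserved, giving equal boundary combinatorics.

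The main obstacle is the bigon reduction: one must verify that the "innermost bigon" rerouting is globally consistent, i.e.\ that the resulting cell complex $\tilde{D}$ really is a disk diagram admitting a combinatorial map to $X$, and that every dual curve crossing the excised region $E$ reassembles coherently so that its two ends on $\partial E$ pair up correctly after regluing. Innermostness is exactly what makes this possible, since it forces each other dual curve to enter and exit $E$ through matched pairs of parallel edges on the arcs of $H_1$ and $H_2$; the orientation-reversing swap across the uncrossing thus carries dual $1$-cells to dual $1$-cells in a way compatible with $F$. Once this local check is in place, the induction closes and produces the desired pathology-free $D'$.
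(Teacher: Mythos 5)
The paper does not prove this lemma; it is cited verbatim as Lemma~2.3 of Wise, so there is no internal argument to compare against. Your overall strategy—induction on area, removing each pathology by a local surgery that strictly reduces area while preserving the boundary path and the pairing of boundary edges into dual curves—is the right shape of argument, and it is consistent with the kind of reasoning Wise uses.

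Your treatment of the oscugon, however, contains a concrete error. You assert that $F(e_1)$ and $F(e_2)$ are \emph{distinct} edges of $X$ dual to a common hyperplane and then invoke the flag condition to produce a square spanned by them. But a dual curve has image in a single hyperplane $\mathcal{H}\subset X$, so $F(e_1)$ and $F(e_2)$ are both dual to $\mathcal{H}$ and share the vertex $F(v)$, where $v=e_1\cap e_2$. In a CAT(0) cube complex there is at most one edge at a given vertex dual to a given hyperplane: the carrier $N(\mathcal{H})$ is a convex, embedded subcomplex isomorphic to $\mathcal{H}\times[0,1]$, and the $[0,1]$ factor at any carrier vertex is that unique dual edge. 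Hence $F(e_1)=F(e_2)$, the flag condition is not in play, and the correct move is a fold identifying $e_1$ with $e_2$ (after which one must still verify the quotient is a disk diagram mapping to $X$). Independently, the flag condition would not help even if the edges were distinct: flagness only promotes pairwise adjacency in a link to a simplex; it does not supply the required edge in the link. Beyond this, the bigon and nongon reductions are asserted at a level that omits the actual work—how the excised region is reglued, why the result is again a planar disk with a combinatorial map to $X$, and why area genuinely drops—as you acknowledge. So the proposal has the correct skeleton but one reduction is wrong as stated and the others are not yet proofs.
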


\begin{definition}[Combed Diagram] \label{combed_diagram_def}
Let $D$ be a disk diagram supported by hyperplane-path sequence $\bar{A} = \{ A_0, ..., A_n \}$ with boundary path $\bar{P} = \{P_0, ..., P_n\}$. We say $D$ is \textit{combed} if the following properties hold:

\begin{enumerate} 
\item $D$ has no pathologies.
\item If $A_i$ is a hyperplane, no two dual curves dual to $P_i$ intersect. In particular, no dual curve has both ends on $P_i$.
\item If $A_i$ and $A_{i+1}$ are both hyperplanes, no dual curve dual to $P_i$ intersects $P_{i+1}$.
\end{enumerate}
\end{definition}

The arguments in the next two lemmas are essentially the same as those in the proof of \cite[Lemma~2.11]{Hag}. However, for our purposes, we often require a statement regarding the boundary combinatorics of a given disk diagram. To be self-contained and to guarantee the arguments are valid in our context, we provide them here.

\begin{lemma} \label{combed_prop_lemma}
Suppose $D$ is a disk diagram supported by hyperplane-path sequence $\bar{A} = \{ A_0, ..., A_n \}$ with boundary path $\bar{P} = \{P_0, ..., P_n\}$, satisfying properties $1$ and $2$ of combed diagrams (Definition \ref{combed_diagram_def}). There exists a combed diagram $D'$ with boundary path $\bar{P}' = \{ P_0', ..., P_n'\}$, where $P_i'$ is a connected subsegment of $P_i$. Furthermore, $\bar{P}'$ has equal boundary combinatorics as its image in $\bar{P}$. 
\end{lemma}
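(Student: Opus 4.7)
The plan is to induct on the area of $D$. In the base case $\mathrm{area}(D) = 0$, the diagram $D$ is a tree and contains no dual curves, so it is trivially combed. For the inductive step, either $D$ already satisfies property $3$ and we take $D' = D$, or we produce a strictly smaller diagram $\tilde{D}$ satisfying the same hypotheses as $D$ (supported by $\bar{A}$, with boundary paths that are connected subsegments of $\bar{P}$ and equal boundary combinatorics to their image, and satisfying properties $1$ and $2$). Applying the inductive hypothesis to $\tilde D$ then produces the desired combed diagram $D'$, which is also combed relative to the original $\bar{P}$ since a subsegment of a subsegment is a subsegment.

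Suppose $D$ fails property $3$, so there is a dual curve $H$ with one end on an edge $e \subset P_i$ and the other on an edge $e' \subset P_{i+1}$ where both $A_i$ and $A_{i+1}$ are hyperplanes. Since $D$ has no pathologies, $H$ embeds as an arc in the planar disk $D$ and separates it into two components. Let $D_1$ be the component containing the corner $v$ where $P_i$ meets $P_{i+1}$; its boundary is the concatenation of $H$, a subpath $\pi_i \subset P_i$ from $e$ to $v$, and a subpath $\pi_{i+1} \subset P_{i+1}$ from $v$ to $e'$. Choose $H$ so that $\mathrm{area}(D_1)$ is minimal among all offending dual curves. Now let $\tilde P_i$ be the initial subsegment of $P_i$ complementary to $\pi_i$, let $\tilde P_{i+1}$ be the terminal subsegment of $P_{i+1}$ complementary to $\pi_{i+1}$, and set $\tilde P_j = P_j$ for all other $j$. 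Define $\tilde D$ by excising $D_1$ together with the portion of the carrier $N(H)$ interior to $D_1$; the result is a disk diagram of strictly smaller area. Property $2$ transfers automatically from $D$ to $\tilde D$ since any two edges of $\tilde P_j \subset P_j$ are dual to pairwise non-intersecting dual curves in $D$, and restriction to $\tilde D$ only shrinks these curves; property $1$ is reestablished by applying Lemma~\ref{wise_lemma}, which preserves boundary combinatorics. The minimality of $D_1$ ensures no new offending dual curve was created in the excised region, and induction closes the argument.

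The main technical obstacle is to verify rigorously that $\tilde D$ is indeed a disk diagram supported by $\bar A$. After shortening, $\tilde P_i$ and $\tilde P_{i+1}$ must intersect so that $\tilde P_0 \ast \cdots \ast \tilde P_n$ forms a closed loop, and each $\tilde P_j$ with $A_j$ a hyperplane must remain a geodesic in $N(A_j)$ (the latter is automatic as a subsegment of a geodesic). The key geometric input is that the hyperplane $\mathcal H = F(H)$ in $X$ crosses both $A_i$ and $A_{i+1}$, since the edges $e$ and $e'$, which lie in $N(A_i)$ and $N(A_{i+1})$ respectively, are both dual to $\mathcal H$. Consequently $N(A_i) \cap N(A_{i+1}) \cap N(\mathcal H)$ is nonempty and provides vertices near which the shortened boundary paths can be reconnected, yielding a valid hyperplane-path boundary loop for $\tilde D$. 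Making this gluing explicit and checking that the excision and re-gluing does not disturb the boundary combinatorics relative to the image in $\bar P$ is the delicate local analysis at the heart of the proof.
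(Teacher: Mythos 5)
Your induction on area stalls at exactly the place you flag as ``the delicate local analysis at the heart of the proof,'' and that step is where the real content lies. After excising $D_1$ together with the adjacent part of $N(H)$, the resulting diagram has a \emph{new} boundary segment running along the far side of the carrier $N(H)$, so its boundary would have to be parsed as $\tilde P_i$, then a geodesic in $N(\mathcal H)$, then $\tilde P_{i+1}$. That is a diagram supported by $\{A_0,\dots,A_{i-1},\tilde A_i,\mathcal H,\tilde A_{i+1},\dots,A_n\}$, not by $\bar A$ as you claim. The two shortened paths $\tilde P_i$ and $\tilde P_{i+1}$ do not meet in $\tilde D$; the fact that $N(A_i)\cap N(A_{i+1})\cap N(\mathcal H)$ is nonempty in $X$ tells you nothing about whether the two cut endpoints coincide in $\tilde D$. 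So the inductive hypothesis cannot be applied, and the argument does not close.

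The idea you are missing, which makes the paper's proof go through, is that property~2 collapses the excision to a single backtrack edge. Fix the offending dual curve $C$ from $P_i$ to $P_{i+1}$. Any dual curve $C'$ emanating from the arc of $P_i$ between the corner $v$ and $C$ is also dual to $P_i$, so by property~$2$ it cannot cross $C$; since $P_i$ is a geodesic, $C'$ cannot return to $P_i$; hence $C'$ must reach $P_{i+1}$. In particular the dual curve to the corner edge $e_1\subset P_i$ reaches $P_{i+1}$, and running the same argument with $P_{i+1}$ in place of $P_i$ pins it down as the dual curve of the corner edge $e_2\subset P_{i+1}$. Since $e_1$ and $e_2$ share $v$ and $D$ has no pathologies (no oscugon), $e_1=e_2$. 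Thus the offending configuration is just a backtrack at $v$, and one deletes that single edge --- no region is cut off, no new boundary piece appears, and the support $\bar A$ is preserved. Iterating this terminates and yields the combed $D'$. Choosing $H$ to minimize the area of $D_1$, as you do, points vaguely at this but never actually forces $e_1 = e_2$; you need property~2 explicitly to rule out dual curves in the wedge crossing $C$.
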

\begin{proof}
Suppose some dual curve $C$ intersects both $P_i$ and $P_{i+1}$.  Let $v$ be the vertex where $P_i$ meets $P_{i+1}$. Let $e_1$ be the edge in $P_i$ which intersects $v$ and $e_2$ the edge in $P_{i+1}$ which intersects $v$. 

By property 2 of combed diagrams, every dual curve to $P_i$ between $v$ and $C$ intersects $P_{i+1}$ as well. It follows that the dual curve, $K$, to $e_1$ intersects $e_2$. Let $Q$ be the combinatorial path in $N(K) - K$ that forms a loop based at $v$. Note that $Q$ cannot contain any edge of $\partial D$.  If $e_1 \neq e_2$, then any dual curve to $Q$ must intersect $Q$ twice, forming a bigon. However, as $D$ does not contain bigons, it follows $e_1 = e_2$. Hence, we obtain a new diagram from $D$ by simply deleting the edge $e_1$. By repeating this process we obtain the desired diagram $D'$. 

\end{proof}

\begin{lemma} \label{remove_intersecting_curves_lemma}
Let $D$ be a disk diagram in a CAT(0) cube complex $X$ supported by hyperplane-path sequence $\bar{A} = \{ A_0, ..., A_n \}$ with boundary path $\bar{P} = \{P_0, ..., P_n \}$. Suppose for some $i$, $\mathcal{A} = A_i$ is a hyperplane. There exists a disk diagram $D'$ also supported by $\bar{A} = \{ A_0,  ..., A_n \}$ with boundary path $\bar{P}' = \{ P_0, ..., P_{i-1}, P_i', P_{i+1}, ..., P_n \}$ such that $\partial D - P_i$ has the same boundary combinatorics as $\partial D' - P_i'$. Additionally, no two dual curves dual to $P_i'$ in $D'$ intersect. 
\end{lemma}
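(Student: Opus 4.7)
The plan is to induct on the area of $D$, modifying both $D$ and (locally) $P_i$ at each step. First I would apply Lemma \ref{wise_lemma} to $D$ to reduce to the case where $D$ has no pathologies; since this does not affect boundary combinatorics, any diagram produced from here will suffice for the original $D$. If no two dual curves dual to $P_i$ meet in $D$, set $D' := D$. Otherwise, choose a pair of dual curves $C_1, C_2$ meeting at a $2$-cube $s \subset D$, with $C_j$ dual to the edge $e_j \in P_i$, such that the subsegment of $P_i$ from $e_1$ to $e_2$ is as short as possible.

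The first technical step is the innermost-pair argument showing that $e_1$ and $e_2$ share a vertex $u$ on $P_i$. Suppose instead that an edge $e'$ of $P_i$ lies strictly between them, with dual curve $C'$. Let $R$ be the sub-disk of $D$ bounded by $C_1[e_1, s]$, $C_2[s, e_2]$, and the $P_i$-arc from $e_1$ to $e_2$. Since $P_i$ is a geodesic in $N(\mathcal{A})$ and hence in $X$, each hyperplane is dual to at most one edge of $P_i$, so $C'$ cannot terminate on the $P_i$-part of $\partial R$. As $D$ has no pathologies, $C'$ must therefore exit $R$ by crossing $C_1$ or $C_2$, producing a pair of intersecting dual curves with smaller $P_i$-separation and contradicting minimality.

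Having $u$, I would perform a cubical surgery at the corner $e_1 \cdot e_2$. The hyperplanes $\mathcal{C}_1 = F(C_1)$, $\mathcal{C}_2 = F(C_2)$, and $\mathcal{A}$ pairwise cross in $X$ ($\mathcal{C}_1, \mathcal{C}_2$ because their dual curves meet in $D$; each $\mathcal{C}_j$ with $\mathcal{A}$ because $e_j \in N(\mathcal{A})$), so the flag condition at $u$ yields a $3$-cube at $u$ in $X$ spanned by $\mathcal{A}, \mathcal{C}_1, \mathcal{C}_2$. The $2$-face $\sigma_u$ of this cube at $u$ parallel to $\mathcal{A}$ provides an alternative corner $\tilde{e}_2 \cdot \tilde{e}_1$ around $\sigma_u$ whose edges and interior vertex lie in $N(\mathcal{A})$, so replacing $e_1 \cdot e_2$ by $\tilde{e}_2 \cdot \tilde{e}_1$ keeps the new path a geodesic in $N(\mathcal{A})$. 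If $\sigma_u$ is already a $2$-cell of $D$ incident to $u$, I delete it to obtain $\tilde{D}$ of area $|D|-1$. Otherwise I attach $\sigma_u$ to $D$ along $e_1 \cup e_2$, note that $C_1$ and $C_2$ now meet at both $s$ and $\sigma_u$, and use Lemma \ref{wise_lemma} to remove the resulting bigon, producing $\tilde{D}$ of area at most $|D|-1$. In either case the combinatorics of $\partial D - P_i$ is untouched, and the inductive hypothesis applied to $\tilde{D}$ supplies the required $D'$ and $P_i'$.

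The hard part will be justifying the existence of $\sigma_u$ in $X$, i.e.\ that two pairwise crossing hyperplanes whose carriers meet at a common vertex actually cross at that vertex, and upgrading this to the full $3$-cube via flagness. In low dimensions this is exactly the place where care is needed: if the ambient complex cannot accommodate such a $3$-cube, then the triangle in $\mathrm{Link}(u)$ required by the configuration violates flagness, so the intersecting pair $C_1, C_2$ cannot have existed in the first place and $D' = D$ (after removing pathologies) already works.
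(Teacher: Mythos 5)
Your proposal follows the same overall strategy as the paper's proof: remove pathologies via Lemma~\ref{wise_lemma}, induct on area, locate an innermost pair of intersecting dual curves to $P_i$ so that the dual edges $e_1,e_2$ share a vertex $u$, and then push $P_i$ across a square lying in $N(\mathcal{A})$ at that corner. The one real point of departure is how that square is produced. The paper observes that the square where $C_1$ and $C_2$ cross is already a $2$--cell of $D$ with $e_1,e_2$ among its edges, checks its image lies in $N(\mathcal{A})$ (splitting into the case where one of $\mathcal{C}_1,\mathcal{C}_2$ \emph{is} $\mathcal{A}$, so that only a $2$--cube is required, and the case where all three hyperplanes are distinct and flagness gives a $3$--cube), and then passes to a subdiagram. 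You instead manufacture $\sigma_u$ abstractly in $X$ and either delete it or glue it on and clean up a bigon; this also works and avoids having to argue that the relevant square already sits in $D$. Two things in your write-up need tightening, however. First, as you yourself flag, ``$\mathcal{C}_1,\mathcal{C}_2,\mathcal{A}$ pairwise cross in $X$'' is not the hypothesis the flag condition at $u$ wants — you need the three pairwise crossings to occur \emph{at} $u$. The missing step is a standard one: $\mathcal{C}_1,\mathcal{C}_2$ cross (through $F(s)$), each is dual to an edge at $F(u)$, the two far endpoints of those edges are at distance exactly $2$ and are separated only by $\mathcal{C}_1$ and $\mathcal{C}_2$, hence their interval is a single square with $F(u)$ as a vertex. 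Second, your final paragraph is misdirected: flagness is an unconditional hypothesis on $X$ and is never ``violated,'' so once the three squares at $u$ exist the $3$--cube always does; the genuine degenerate case is $\mathcal{C}_j = \mathcal{A}$ for some $j$ (so only two hyperplanes are in play and a single $2$--cube in $N(\mathcal{A})$ suffices), which you should treat separately as the paper does.
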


\begin{proof}
By Lemma \ref{wise_lemma} we may assume $D$ has no pathologies. Set $P = P_i$ and suppose two dual curves to $P$, $C_1$ and $C_2$, intersect. 

A dual curve cannot have two ends on $P$. For otherwise $P$ would cross the same hyperplane twice, contradicting $P$ being geodesic. In particular, $C_1 \neq C_2$. Let $e_1$ be the edge on $P$ dual to $C_1$ and $e_2$ the edge on $P$ dual to $C_2$. If $d_{P}(e_1, e_2) = d >0$, it follows there is another dual curve to $P$, $C_3$, between $e_1$ and $e_2$. $C_3$ must then either intersect $C_1$ or intersect $C_2$ ($C_3$ cannot have both ends on $P$). Proceeding this way, we can then assume that $e_1$ and $e_2$ are distinct adjacent edges. 

Let $S$ be a square where $C_1$ and $C_2$ intersect. There are two cases. First suppose that $S$ does not contain $e_1$ and $e_2$ as edges. As $C_1$ intersects $C_2$, it follows, $e_1$ and $e_2$ must both lie in another square of $X$, say $S'$. For if this were not the case, the hyperplanes associated to the dual curves $C_1$ and $C_2$ would both cross and osculate (i.e., have dual adjacent edges that are not in a common square). However, this is not possible in a CAT(0) cube complex (see \cite[Section 6b]{Wise}).

As the link of vertices in $X$ are flag complexes, it follows $S' \subset N(\mathcal{A})$. We can then form a new disk diagram, $D'$, by attaching $S'$ to $D$ along the edges $e_1$ and $e_2$. This modifies the path $P$ into a new path $P'$, that is still geodesic and is still in $N(\mathcal{A}) $. In $D'$, the dual curves $C_1$ and $C_2$ now form a bigon. By \cite[Lemma 2.3]{Wise}, there is a another disk diagram $D''$, with the same boundary combinatorics as $D'$ and no pathologies, such that $\text{Area}(D'') \le \text{Area}(D') - 2 = \text{Area}(D) -1$. We have thus produced a diagram, $D''$, with the desired boundary combinatorics that is of area strictly smaller than $D$.

For the second case, suppose $e_1$ and $e_2$ are edges of $S$. Label the other edges of $S$ as $e_3$ and $e_4$. Let $\mathcal{H}_1$ and $\mathcal{H}_2$ respectively be hyperplanes which pass through $e_1$ and $e_2$. If $\mathcal{A} = \mathcal{H}_1$, then it follows that $e_1, e_2, e_3 \subset N(\mathcal{A})$. Hence, $S \subset N(\mathcal{A})$. Alternatively, suppose $\mathcal{H}_1$, $\mathcal{H}_2$, and $\mathcal{A}$ are distinct. Since the link of vertices in $X$ are flag complexes, it follows $S$ is in a $3$-cube which is contained in $N(\mathcal{A})$. Either way, $S \subset N(\mathcal{A})$. 

Let $P'$ be the path in $D$ which is the same as $P$ with $e_1, e_2$ replaced with $e_3, e_4$. $P'$ is still geodesic. Furthermore, $P' \subset N(\mathcal{A})$. Let $D' \subset D$ be the disk diagram obtained as a subdiagram of $D$ by replacing $P$ with $P'$. It follows $D'$ is a diagram of smaller area than $D$. Furthermore, the boundary combinatorics of $\partial D - P$ are not affected. 

In both cases we are able to produce a smaller area disk diagram with the desired boundary combinatorics. Therefore, by iterating this process we are guaranteed to eventually have a diagram with the conclusion of the lemma.
\end{proof}

The following lemma guarantees the existence of a combed diagram. 

\begin{lemma} \label{minimal_area_lemma}
Let $D$ be a disk diagram supported by hyperplane-path sequence $\bar{A}$. There exists a combed disk digram $D'$ also supported by $\bar{A}$. 
\end{lemma}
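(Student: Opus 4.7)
The plan is to combine the three preceding lemmas via an area-minimization argument. I would choose $D$ to be a disk diagram supported by $\bar{A}$ of minimal area; such a $D$ exists by Van Kampen's lemma applied to any valid choice of $\bar{P}$ together with the well-ordering of the non-negative integers. The aim is to show that this minimal $D$ already satisfies properties 1 and 2 of Definition \ref{combed_diagram_def}, after which Lemma \ref{combed_prop_lemma} will supply the remaining property.

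For property 1, monogons are ruled out by Lemma \ref{no_monogons_lemma}. If $D$ contained a bigon, nongon, or oscugon, then the reduction in Lemma \ref{wise_lemma} would produce a diagram with equal boundary combinatorics but strictly smaller area; since equal boundary combinatorics implies equal boundary complexes, this smaller diagram would have the same $\bar{P}$, hence remain supported by $\bar{A}$, contradicting minimality. For property 2, suppose at some index $i$ with $A_i$ a hyperplane, two dual curves dual to $P_i$ intersect. Then Lemma \ref{remove_intersecting_curves_lemma} would yield a strictly smaller diagram differing from $D$ only by replacing $P_i$ with a geodesic $P_i'$ in $N(A_i)$ having the same endpoints, still supported by $\bar{A}$, again contradicting minimality.

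Having verified properties 1 and 2 for this minimal $D$, I would invoke Lemma \ref{combed_prop_lemma} to produce a combed diagram $D'$ with boundary path $\bar{P}'$ where each $P_i'$ is a connected subsegment of $P_i$. A connected subsegment of a geodesic in $N(A_i)$ is itself a geodesic in $N(A_i)$, and the cancellation construction in the proof of Lemma \ref{combed_prop_lemma} removes matched edges at shared vertices so that consecutive subsegments still close up into a loop; thus $\bar{P}'$ is a valid witness that $D'$ is supported by $\bar{A}$, completing the argument.

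The principal obstacle I anticipate is the bookkeeping required to verify, at each reduction, that the modified diagram is still supported by $\bar{A}$. Concretely, one must check that every replacement subpath remains a geodesic in the appropriate carrier $N(A_i)$, that it still has matching endpoints with its neighbors, and that Lemma \ref{wise_lemma}'s reduction genuinely preserves the boundary path (and not merely its combinatorics). Once this is handled, the proof amounts to invoking the three preceding lemmas against an area-minimal counterexample.
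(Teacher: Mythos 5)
Your proposal is correct and follows essentially the same route as the paper: it combines Lemma \ref{wise_lemma}, Lemma \ref{remove_intersecting_curves_lemma}, and Lemma \ref{combed_prop_lemma}, differing only in that you package the paper's iterative reduction ("repeatedly apply") as a single area-minimization argument, which is a standard and equivalent framing since the reduction moves in both lemmas strictly decrease area while preserving support by $\bar{A}$. The one point to keep in mind is that Lemma \ref{wise_lemma} as stated guarantees only the existence of a pathology-free diagram with equal boundary combinatorics, not strict area drop; you are implicitly appealing to the standard fact that Wise's reduction moves decrease area, which is what makes the minimality contradiction go through.
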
 

\begin{proof}
The lemma follows by applying Lemma \ref{wise_lemma} to $D$ to get a disk diagram with no pathologies. We then repeatedly apply Lemma \ref{remove_intersecting_curves_lemma} to the resulting diagram to obtain a diagram satisfying property 1 and 2 of combed diagrams. Finally, we apply  Lemma \ref{combed_prop_lemma} to obtain a combed diagram. 
\end{proof}

Given a maximal dual curve $C$ in a disk diagram $D$, we want to construct a new combed diagram with the hyperplane $\mathcal{C}$ in its support. Furthermore, we want the appropriate boundary combinatorics of $D$ preserved in this new diagram. The following technical lemma guarantees the existence of such a diagram.

\begin{lemma} \label{new_combed_diagram_lemma}
Let $D$ be a combed disk diagram in a CAT(0) cube complex $X$ supported by hyperplane-path sequence $\bar{A} = \{ A_0, ...,A_n \}$ and with boundary path $\bar{P} = \{P_0, ..., P_n\}$. 

Suppose $C$ is a maximal dual curve from $P_i$ to $P_j$ with $i < j$ and let $\mathcal{C} \subset X$ be the hyperplane associated to $C$. Let $P$ be a combinatorial path in $N(C)$ from $P_i$ to $P_j$. Let $P_i'$ be the subsegment of $P_i$ between $P_{i-1}$ and $P$, and let $P_j'$ be the subsegment of $P_j$ between $P$ and $P_{j+1}$. Let $A_i'$ and $A_j'$ be the corresponding supports of $P_i'$ and $P_j'$ respectively ($A_i' = \mathcal{A}_i$ if $A_i$ is a hyperplane and $A_i = P_i'$ otherwise). 

Let $P'$ be any combinatorial geodesic in $N(\mathcal{C})$ connecting the endpoints of $P$. There exists a combed disk diagram $D'$ supported by 
\[ \bar{A}' = \{A_0, ..., A_{i-1}, A_i', \mathcal{C}, A_j', A_{j+1}, ..., A_n \} \] 

with boundary path

\[ \bar{P}' = \{P_0, ..., P_{i-1}, P_i', P', P_j',P_{j+1} ..., P_n\}\]

 such that $\partial D' - P'$ has the same boundary combinatorics as the corresponding subset of $\partial D$.  
\end{lemma}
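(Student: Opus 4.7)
The plan is to construct $D'$ by cutting the combed diagram $D$ along the path $P$ to obtain a subdiagram, and then applying the combing lemmas already established to clean it up without disturbing the relevant boundary combinatorics.

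Since $P$ is a combinatorial path in $N(C) \subset D$ with its two endpoints on $\partial D$ (one on $P_i$ and one on $P_j$), and $D$ is topologically a disk, $P$ separates $D$ into two subdiagrams. Let $D_0$ be the subdiagram whose boundary contains $P_0, \ldots, P_{i-1}, P_{j+1}, \ldots, P_n$. Tracing $\partial D_0$ then yields exactly $P_0 * P_1 * \cdots * P_{i-1} * P_i' * P * P_j' * P_{j+1} * \cdots * P_n$, which is the required boundary path. To verify that $D_0$ is supported by $\bar{A}'$, each inherited $P_k$ retains its support from $D$, the subsegments $P_i'$ and $P_j'$ are supported by $A_i'$ and $A_j'$, and the image $F(P)$ lies in $N(\mathcal{C})$ because the combinatorial map $F : D \to X$ sends $N(C) \subset D$ into $N(\mathcal{C}) \subset X$. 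The only new intersection conditions to check are between $\mathcal{C}$ and each of $\mathcal{A}_i$ and $\mathcal{A}_j$ (when these are hyperplanes); these hold because the initial edge of $C$ on $P_i$ lies in $N(\mathcal{A}_i)$ while being dual to $\mathcal{C}$, which forces $\mathcal{A}_i$ and $\mathcal{C}$ to cross in $X$, and the argument at $P_j$ is symmetric.

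I then comb $D_0$ in three stages. First, Lemma \ref{wise_lemma} removes any pathologies introduced by the cut while preserving all boundary combinatorics. Second, Lemma \ref{remove_intersecting_curves_lemma} applied to the newly introduced hyperplane $\mathcal{C}$ and its supporting path $P$ ensures that no two dual curves dual to $P$ intersect, possibly replacing $P$ with a subsegment $P'$ while preserving the boundary combinatorics of $\partial D_0 - P$. Third, Lemma \ref{combed_prop_lemma} establishes property 3 of combed diagrams, trimming boundary segments at the new junctions adjacent to $\mathcal{C}$ if necessary.

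The main technical obstacle is verifying that the cumulative application of these three lemmas preserves the boundary combinatorics of $\partial D' - P'$ as required. Each of the three lemmas is designed to preserve boundary combinatorics outside the particular boundary path it is modifying. Moreover, the inherited junctions present already in $D$ automatically satisfy property 3 in $D_0$: since $D$ was combed and cutting along $P$ can only shorten dual curves, no new intersections of the forbidden type can be created at those junctions. Combining these observations yields that the final combed diagram $D'$ has the same boundary combinatorics on $\partial D' - P'$ as the corresponding subset of $\partial D$, completing the plan.
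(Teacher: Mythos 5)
Your proposal shares the same opening move as the paper's proof (cut $D$ along $P$ to obtain a subdiagram), but it omits the essential intermediate step that makes the construction work, and this gap is not a technicality.

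The cut diagram $D_0$ has $P$ as a boundary segment, and while you correctly observe that $F(P) \subset N(\mathcal{C})$, the definition of a disk diagram supported by a hyperplane-path sequence requires the boundary segment carried by a hyperplane to be a \emph{geodesic} in that hyperplane's carrier (Definition \ref{hyperplane_path_sequence_def}). A combinatorial path $P$ in $N(C)\subset D$ need not map to a geodesic under $F$, so $D_0$ is not, formally, a diagram supported by $\bar{A}'$, and none of the three combing lemmas applies to it. In particular, you cannot invoke Lemma \ref{remove_intersecting_curves_lemma} to repair this: its proof explicitly uses that the boundary segment being modified is already a geodesic (the move replaces two edges by two other edges in the same $3$--cube, preserving length, and concludes ``$|P'| = |P|$, so $P'$ is geodesic''), and its output has the \emph{same length} as its input --- it carries $P$ toward $\mathcal{C}$ but never shortens it. So your parenthetical ``possibly replacing $P$ with a subsegment $P'$'' does not describe what that lemma delivers.

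The paper's proof resolves exactly this issue by introducing an auxiliary ``bridge'' diagram: it chooses a genuine geodesic $P'$ in $N(\mathcal{C})\subset X$ joining the endpoints of $F(P)$, builds a combed diagram $D_1$ with boundary path $\{P', P\}$, and glues $D_1$ onto the cut piece $D_2=D_0$ along $P$. The resulting $D_3$ is then honestly supported by $\bar{A}'$ with boundary path $\{P_0,\dots,P_i',P',P_j',\dots,P_n\}$, and only \emph{after} this gluing does the paper invoke Lemma \ref{wise_lemma} and Lemma \ref{remove_intersecting_curves_lemma} to clean up. Your proposal would be correct if you inserted this bridge step between cutting and combing; without it, the object you hand to the combing lemmas does not satisfy their hypotheses.
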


\begin{proof}
Let $D_1$ be a combed diagram with boundary path $\{ P', P \}$. Let $D_2$ be the subdiagram of $D$ with boundary path $\{ P_0, ...,P_i', P, P_{j}',...,P_n \}$. Note that $D_2$ is still combed.  We may form a new disk diagram $D_3$ by gluing $D_1$ to $D_2$ along $P$. $D_3$ is supported by $\{ A_0, ...,A_i', \mathcal{C}, A_{j}',...,A_n \}$ with boundary path $\{P_0, ..., P_i', P', P_{j}',... P_n \}$. 

By Lemma \ref{wise_lemma}, we may assume $D_3$ has no pathologies and $\partial D_3 - P'$ has the same combinatorics as the corresponding subset of $\partial D$.  

All that is left to prove is that properties 2 and 3 of combed diagrams (Definition \ref{combed_diagram_def}) hold. Property 3 clearly still holds for dual curves which do not intersect $P'$. Assume $A_i$ is a hyperplane. Since $D$ is combed, no dual curve to $P_i'$ in $D_2$ intersects $P$. Since the combinatorics of $D_2$ are preserved in $D_3$, this is still the case in $D_3$. In particular, no dual curve to $P_i'$ intersects $P'$ in $D_3$. Therefore, property 3 holds in $D_3$. 

Assume property 2 is false in $D_3$. We then have two intersecting dual curves, $C_1$ and $C_2$, that are dual to the same boundary path in $D_3$ with hyperplane support. By the preservation of boundary combinatorics and the fact that $D_3$ is combed, it follows each of these curves must have an endpoint on $P'$. We can then modify $D_3$ using Lemma \ref{remove_intersecting_curves_lemma} to obtain a new diagram $D'$ satisfying the claim. 
\end{proof}

\section{Hyperplane Separation Properties} \label{section_separation_props}

\textbf{For the remainder of the paper, $X$ will denote a CAT(0) cube complex. Furthermore, $\mathcal{Y}$ and $\mathcal{Z}$ will always denote a pair of non-intersecting unbounded hyperplanes in $X$.} 

We will discuss different definitions for separation properties of a given pair of non-intersecting hyperplanes. In the next section we will explore the relationship between these separation properties and the divergence of $X$.

Definition \ref{strongly_separated_def} given below is from \cite{BC}.

\begin{definition} [\cite{BC}] \label{strongly_separated_def}
$\mathcal{Y}$ and $\mathcal{Z}$ are \textit{strongly separated} if no hyperplane intersects them both. 
\end{definition}

A \textit{minimal geodesic} $g$ between hyperplanes $\mathcal{Y}$ and $\mathcal{Z}$ is a combinatorial geodesic with endpoints on $N(\mathcal{Y})$ and $N(\mathcal{Z})$ such that $|g|$ is minimal over all such geodesics. The following lemma shows minimal geodesics between strongly separated hyperplanes have the same endpoints.

\begin{lemma} \label{strongly_separated_lemma}
	Suppose $\mathcal{Y}$ and $\mathcal{Z}$ are strongly separated hyperplanes. Let $g_1$ and $g_2$ be minimal geodesics between $\mathcal{Y}$ and $\mathcal{Z}$. It follows that $N(\mathcal{Y}) \cap g_1 = N(\mathcal{Y}) \cap g_2$. Consequently, $g_1$ and $g_2$ have the same endpoints.
\end{lemma}

\begin{proof}
	Let $N(\mathcal{Y}) \cap g_1 = v_1$ and $N(\mathcal{Y}) \cap g_2 = v_2$. Suppose, for a contradiction, that $v_1 \neq v_2$. Let $h$ be a combinatorial geodesic from $v_1$ to $v_2$ in $N(\mathcal{Y})$ and let $\mathcal{H}$ be a hyperplane intersecting $h$. $\mathcal{H}$ cannot intersect $\mathcal{Z}$ since $\mathcal{Y}$ and $\mathcal{Z}$ are strongly separated. It follows $\mathcal{H}$ must either intersect $g_1$ or $g_2$. This is a contradiction (see \cite[Remark 3.12]{Wise}).
\end{proof}

The following definition gives a slight generalization of strongly separated hyperplanes.

\begin{definition} \label{k_separated_def}
$\mathcal{Y}$ and $\mathcal{Z}$ are $k$--separated if at most $k$ hyperplanes intersect both $\mathcal{Y}$ and $\mathcal{Z}$. In particular, a pair of strongly separated hyperplanes are $0$--separated.
\end{definition}

The following two lemmas describe how minimal geodesics and hyperplanes intersecting a pair of $k$--separated hyperplanes behave nicely.

\begin{lemma} \label{k_separated_lemma}
Suppose $\mathcal{Y}$ and $\mathcal{Z}$ are $k$--separated, then $d(\mathcal{H}_1 \cap \mathcal{Y}, \mathcal{H}_2 \cap \mathcal{Y}) \le k$ for every pair of hyperplanes $\mathcal{H}_1$, $\mathcal{H}_2$ which intersect both $\mathcal{Y}$ and $\mathcal{Z}$. Furthermore, either $\mathcal{Y}$ and $\mathcal{Z}$ are strongly separated, or every minimal geodesic $g$ connecting $\mathcal{Y}$ to $\mathcal{Z}$ lies in the carrier of a hyperplane which intersects both $\mathcal{Y}$ and $\mathcal{Z}$.
\end{lemma}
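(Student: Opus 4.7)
The plan is to use the disk diagram techniques of Section \ref{section_disk_diagrams} for both parts of the lemma. In each case, I build a closed loop in $X$ whose sides are geodesics in the carriers of the relevant hyperplanes, fill with a combed disk diagram via Lemma \ref{minimal_area_lemma}, and then use the combed conditions (Definition \ref{combed_diagram_def}) to constrain the endpoints of dual curves in the diagram.

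For the first claim, I would form a four-sided loop consisting of a geodesic $P_{\mathcal{Y}}$ in $\mathcal{Y}$ of length $d = d(\mathcal{H}_1 \cap \mathcal{Y}, \mathcal{H}_2 \cap \mathcal{Y})$, a geodesic in $N(\mathcal{H}_2)$ from $\mathcal{Y}$ to $\mathcal{Z}$, a geodesic in $N(\mathcal{Z})$, and a geodesic in $N(\mathcal{H}_1)$ back to the start. In a combed disk diagram $D$ supported by $\{\mathcal{Y}, \mathcal{H}_2, \mathcal{Z}, \mathcal{H}_1\}$, each of the $d$ edges of $P_{\mathcal{Y}}$ is dual to a distinct dual curve, each corresponding to a hyperplane of $X$ crossing $\mathcal{Y}$. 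Property 3 of combedness forbids these curves from ending on $P_{\mathcal{H}_1}$ or $P_{\mathcal{H}_2}$; Lemma \ref{no_monogons_lemma}, together with the fact that $P_{\mathcal{Y}}$ is a geodesic, forbids endpoints back on $P_{\mathcal{Y}}$ itself. Hence every such curve must end on the $P_{\mathcal{Z}}$ side, so its corresponding hyperplane crosses both $\mathcal{Y}$ and $\mathcal{Z}$. The $k$-separation hypothesis then yields $d \le k$.

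For the second claim, strong separation of $\mathcal{Y}$ and $\mathcal{Z}$ gives uniqueness of $g$ immediately by Lemma \ref{strongly_separated_lemma}. Otherwise, fix any hyperplane $\mathcal{K}$ crossing both and construct a loop consisting of $g$, a geodesic $P_{\mathcal{Z}}$ in $N(\mathcal{Z})$ reaching $N(\mathcal{K})$, a geodesic $h$ in $N(\mathcal{K})$ from $N(\mathcal{Z})$ to $N(\mathcal{Y})$, and a geodesic $P_{\mathcal{Y}}$ in $N(\mathcal{Y})$ returning to the start of $g$. In a combed disk diagram $D$ supported by $\{g, \mathcal{Z}, \mathcal{K}, \mathcal{Y}\}$, minimality of $g$ implies each hyperplane dual to an edge of $g$ separates $\mathcal{Y}$ from $\mathcal{Z}$ and hence crosses neither. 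Combined with combedness, this forces every dual curve from $g$ to end on $h$ (ruling out $P_{\mathcal{Y}}, P_{\mathcal{Z}}$, and $g$ itself), and every dual curve from $P_{\mathcal{Y}}$ to end on $P_{\mathcal{Z}}$. Combed property 2, applied via the hyperplane-supported sides $h$ and $P_{\mathcal{Y}}$, prevents internal crossings within either family, and topological considerations in the disk $D$ force pairwise crossings between the two families. This yields a rectangular grid structure on $D$ with $g, h$ and $P_{\mathcal{Y}}, P_{\mathcal{Z}}$ as opposite sides.

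From this grid, the row of squares adjacent to $g$ is bounded opposite $g$ by a single dual curve mapping to a hyperplane $\mathcal{K}'$ that crosses both $\mathcal{Y}$ and $\mathcal{Z}$. Each edge of $g$ is then an edge of a square in $X$ having $\mathcal{K}'$ as midcube, so $g \subset N(\mathcal{K}')$. The main obstacle is handling the degenerate case where $P_{\mathcal{Y}}$ or $P_{\mathcal{Z}}$ has length zero, in which the grid has no interior rows: here $D$ contains no $2$-cells and collapses to a tree whose boundary cycle is $g \cdot h^{-1}$, which forces $g = h$, whence $g \subset N(\mathcal{K})$ directly. Together with the strongly separated case, this completes the dichotomy.
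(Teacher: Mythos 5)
Your proof follows the same strategy as the paper's: build combed disk diagrams supported by the relevant hyperplane-path sequences and use the combed conditions (together with geodesity and no pathologies) to pin down where dual curves can terminate, deducing the $k$-separation bound from the curves forced into $P_{\mathcal{Z}}$, and the carrier conclusion from the rectangular grid structure. You spell out a few details the paper leaves implicit and explicitly handle the degenerate case where the carrier geodesics have length zero, but the core sequence of ideas is identical.
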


\begin{proof}
Suppose hyperplanes $\mathcal{H}_1$ and $\mathcal{H}_2$ each intersect both $\mathcal{Y}$ and $\mathcal{Z}$. For a contradiction, suppose that $d(\mathcal{H}_1 \cap \mathcal{Y}, \mathcal{H}_2 \cap \mathcal{Y}) > k$. Let $D$ be a combed disk diagram supported by $\{ \mathcal{Y}, \mathcal{H}_1, \mathcal{Z}, \mathcal{H}_2 \}$. Every dual curve to $\mathcal{Y}$ must intersect $\mathcal{Z}$. However, there are more than $k$ such dual curves and hence more than $k$ hyperplanes intersecting both  $\mathcal{Y}$ and $\mathcal{Z}$. This contradicts $\mathcal{Y}$ and $\mathcal{Z}$ being $k$-separated. 

To prove the lemma's second claim, suppose $g$ is a minimal geodesic from $\mathcal{Y}$ to $\mathcal{Z}$, and assume that $\mathcal{Y}$ and $\mathcal{Z}$ are not strongly separated. Suppose $\mathcal{H}$ is a hyperplane intersecting both $\mathcal{Y}$ and $\mathcal{Z}$ and let $D$ be a combed diagram supported by the hyperplane-path sequence $\{\mathcal{Y}, g, \mathcal{Z}, \mathcal{H} \}$ with boundary path $\{Y, g, Z, H\}$. Every dual curve to $H$ must intersect $g$. Since $g$ is geodesic, $|g| = |H|$ and it follows no dual curve to $\mathcal{Y}$ can intersect $g$. So every dual curve to $Y$ intersects $Z$. It follows $D$ is an Euclidean rectangle. Hence, there is a dual curve $C$ from $Y$ to $Z$ which has $g$ as part of its boundary path. So, $g$ is contained in the carrier of the hyperplane $\mathcal{C}$ which intersects both $\mathcal{Y}$ and $\mathcal{Z}$. 
\end{proof}

\begin{definition}
If infinitely many hyperplanes intersect both $\mathcal{Y}$ and $\mathcal{Z}$ then we say $\mathcal{Y}$ and $\mathcal{Z}$ are \textit{$\infty$-connected}. 
\end{definition}

A pair of non-intersecting hyperplanes are $k$--chain connected (formally defined below) if there is an appropriate sequence of sets of hyperplanes connecting them. Hyperplanes that are not $k$--chain connected, $k$--chain separated hyperplanes, provide a generalization of the notion of $k$--separated hyperplanes. 

\begin{figure}[h]
\centering
\begin{overpic}[scale=.35]{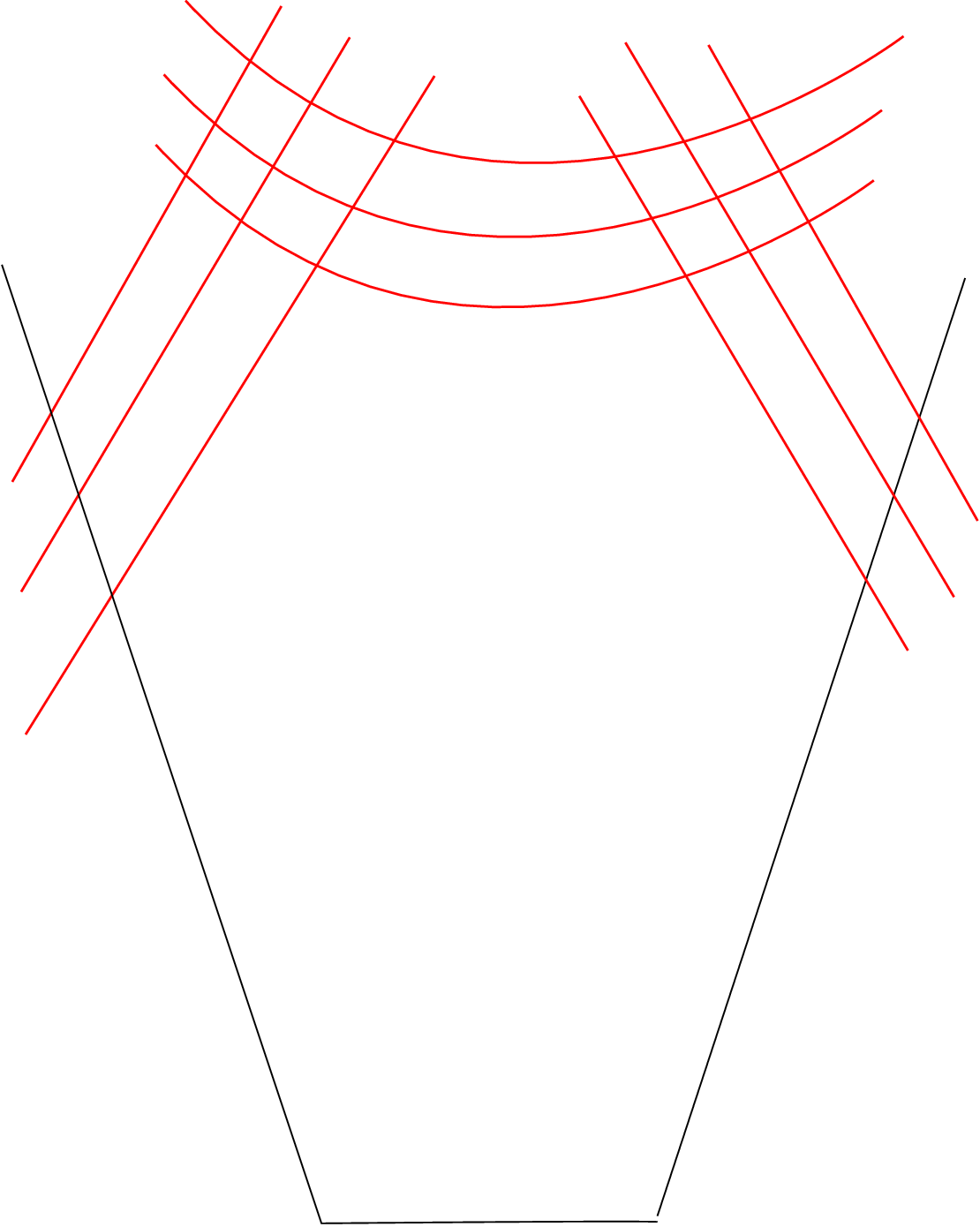}
\put(12,20){$\mathcal{Y}$}
\put(65, 20){$\mathcal{Z}$}
\put(40, 1){$g$}

\put(2, 75){$\mathcal{H}_1^1$}
\put(7, 68){$\mathcal{H}_2^1$}
\put(12, 60){$\mathcal{H}_3^1$}

\put(38, 88){$\mathcal{H}_1^2$}
\put(38, 81){$\mathcal{H}_2^2$}
\put(38, 71){$\mathcal{H}_3^2$}

\put(68, 73){$\mathcal{H}_1^3$}
\put(63, 68){$\mathcal{H}_2^3$}
\put(58, 63){$\mathcal{H}_3^3$}

\end{overpic}

\caption{The hyperplanes $\mathcal{Y}$ and $\mathcal{Z}$ above are $3$--chain connected.}
\end{figure}

\begin{definition}[$k$--chain connected] \label{thickly_connected_hyperplanes_def}
$\mathcal{Y}$ and $\mathcal{Z}$ are \textit{$k$--chain connected} if there exists a sequence of length $k$ sequences of hyperplanes:  

\[ S_1 = \{ \mathcal{H}_{1}^{1}, \mathcal{H}_{2}^{1}, ..., \mathcal{H}_{k}^{1} \} \]
\[ S_2 = \{ \mathcal{H}_{1}^{2}, \mathcal{H}_{2}^{2}, ..., \mathcal{H}_{k}^{2} \} \] 
\[ ... \]
\[ S_m = \{ \mathcal{H}_{1}^{m}, \mathcal{H}_{2}^{m}, ..., \mathcal{H}_{k}^{m} \} \]

satisfying the following properties:
 
\begin{description}
\item[I.] For each $i$, hyperplanes in $S_i$ pairwise do not intersect. 
\item[II.] For each $i < m$, each hyperplane in $S_i$ intersects each hyperplane in $S_{i+1}$. 
\item[III.] Every hyperplane in $S_1$ intersects $\mathcal{Y}$ and every hyperplane in $S_m$ intersects $\mathcal{Z}$. 
\end{description}

\end{definition}

\begin{definition} \label{k_chain_separated_def}
$\mathcal{Y}$ and $\mathcal{Z}$ are \textit{$k$--chain separated} if they are not $k$--chain connected. 
\end{definition}

\begin{definition}
The hyperplanes $\mathcal{H}$ and $\mathcal{H}'$ are of the same type, if they are in the same orbit of $Aut(X)$. The hyperplanes $\mathcal{H}$ and $\mathcal{H}'$ are of non-intersecting type if $g\mathcal{H} \cap \mathcal{H}' = \emptyset$ for all $g \in Aut(X)$. 
\end{definition}

\begin{definition}
Let $Q = \{ \mathcal{H}_1, ..., \mathcal{H}_m \}$ be a sequence of hyperplanes. Define 
\[Type(Q) = (T_1, ..., T_m ) \]
where $T_i$ is the hyperplane type (orbit class) of $\mathcal{H}_i$. Note that the tuple $Type(Q)$ is ordered.
\end{definition}

The next set of definitions provide a further strengthening of the notion of $k$--chain separated hyperplanes which allows us to prove stronger divergence bounds in the next section. The following definitions were created with the key example of right-angled Coxeter groups in mind.

\begin{figure}[h]
\centering
\begin{overpic}[scale=.3]{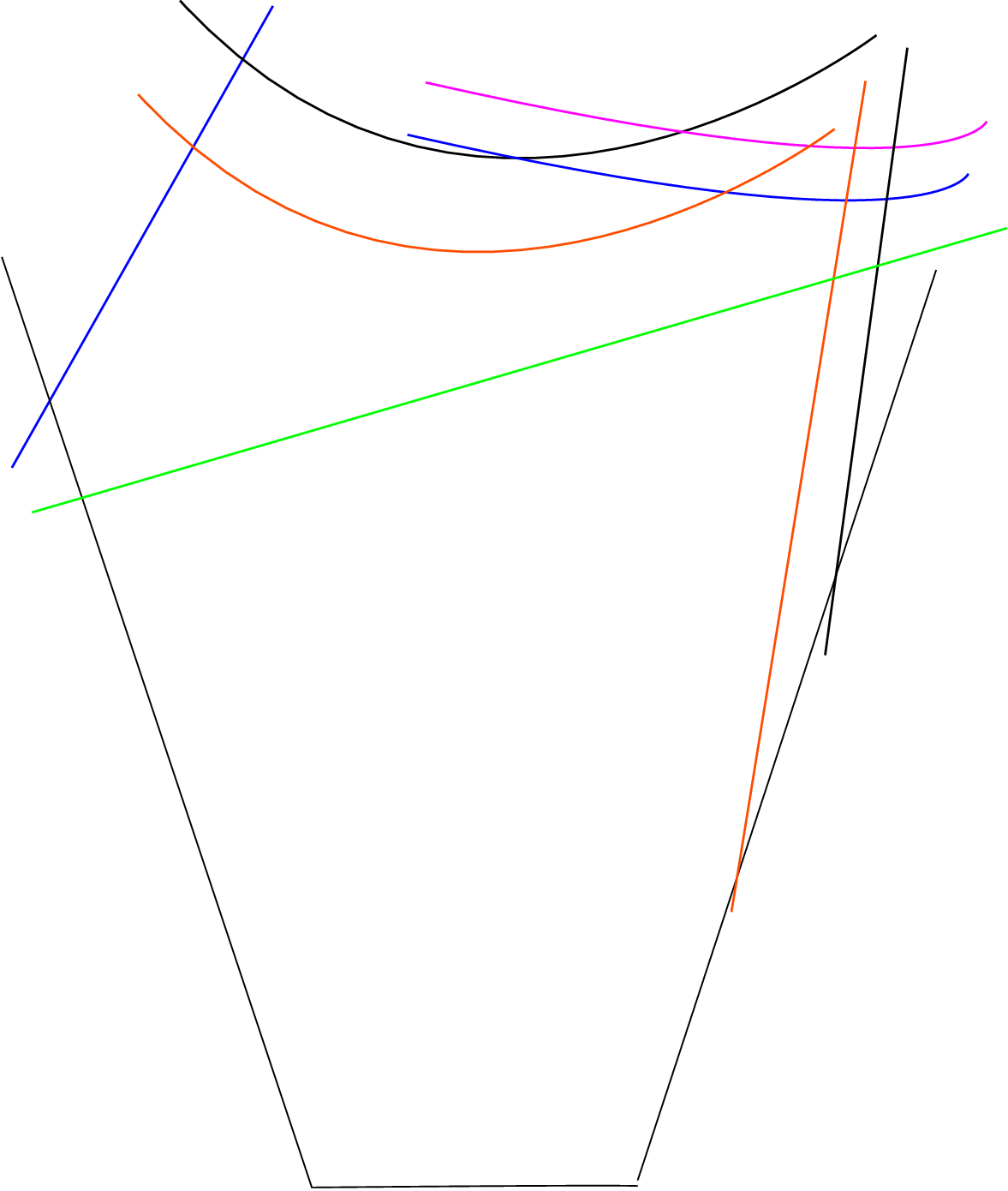}
\put(12,20){$\mathcal{Y}$}
\put(65, 20){$\mathcal{Z}$}
\put(40, 1){$g$}

\put(2, 75){$\mathcal{H}_1^1$}
\put(12, 60){$\mathcal{H}_2^1$}

\put(20, 90){$\mathcal{H}_1^2$}
\put(20, 81){$\mathcal{H}_2^2$}

\put(44, 92){$\mathcal{H}_1^3$}
\put(37, 88){$\mathcal{H}_2^3$}

\put(70, 69){$\mathcal{H}_1^4$}
\put(62, 62){$\mathcal{H}_2^4$}

\end{overpic}

\caption{Hyperplanes $\mathcal{Y}$ and $\mathcal{Z}$ are symbolically $2$--chain connected. The hyperplane colors signify their type.}
\end{figure}

\begin{definition}[Symbolically $k$--chain Connected] \label{symbolically_chain_connected_def}
$\mathcal{Y}$ and $\mathcal{Z}$ are \textit{symbolically $k$--chain connected} if there exists a sequence of length $k$ sequences of hyperplanes:  

\[ S_1 = \{ \mathcal{H}_{1}^{1}, \mathcal{H}_{2}^{1}, ..., \mathcal{H}_{k}^{1} \} \]
\[ S_2 = \{ \mathcal{H}_{1}^{2}, \mathcal{H}_{2}^{2}, ..., \mathcal{H}_{k}^{2} \} \] 
\[ ... \]
\[ S_m = \{ \mathcal{H}_{1}^{m}, \mathcal{H}_{2}^{m}, ..., \mathcal{H}_{k}^{m} \} \]

satisfying the following five properties: 

\begin{itemize}
\item[I.] For each $i \le m$ and $j < k$, $H_j^i$ and $H_{j+1}^i$ are of non-intersecting type. 
\item[II.] For each $1 < i \le m$, every hyperplane in $S_{i}$ intersects $\mathcal{H}_{1}^{i-1}$. 
\item[III.] For each $i < m$ and $j \le k$, there exists an integer $c(i,j)$ such that $i < c(i,j) \le m$ and  $\mathcal{H}_{j}^{i}$ intersects every hyperplane in $S_{c(i,j)}$.

Additionally, for all $j, j' \le k$ and for all $i < m$, $Type(S_{c(i,j)}) = Type(S_{c(i,j')})$.
\item[IV.] Every hyperplane in $S_1$ intersects $\mathcal{Y}$ and every hyperplane in $S_m$ intersects $\mathcal{Z}$. 
\item[V.] Let $g$ be a minimal geodesic from $\mathcal{Y}$ to $\mathcal{Z}$. For all $i \le m$ and $j \le k$, $g$ and $\mathcal{H}_{j-1}^i$ lie in different half-spaces of $\mathcal{H}_{j}^i$. 
\end{itemize}

\end{definition}

\begin{remark}
By Lemma \ref{k_separated_lemma}, property V in the definition above necessarily implies that $m >1$. Furthermore, it follows that if property V is true for a minimal geodesic from $\mathcal{Y}$ to $\mathcal{Z}$, then it is true for all minimal geodesics from $\mathcal{Y}$ to $\mathcal{Z}$.
\end{remark}

\begin{definition}[symbolically $k$--chain separated] \label{symbolically_chain_separated_def}
Two non-intersecting hyperplanes are \textit{symbolically $k$--chain separated} if they are not symbolically $k$--chain connected and are not $k$--chain connected. 
\end{definition}

\section{Divergence in CAT(0) Cube Complexes} \label{section_div_cube_complex}
 
We now define the \textit{hyperplane divergence function}, $HDiv$, which measures the length of a shortest path between two hyperplanes which avoids a ball centered on one of the hyperplanes. Later in this section, we obtain lower bounds on the $Div$ function from lower bounds on the $HDiv$ function.

\begin{definition} \label{hyperplane_divergence_def}
Let $g$ be a minimal geodesic from $\mathcal{Y}$ to $\mathcal{Z}$ and set $p = \mathcal{Y} \cap g$. $HDiv_{g}(\mathcal{Y}, \mathcal{Z})(r)$ is the length of a shortest path from $\mathcal{Y}$ to $\mathcal{Z}$ which avoids the ball $B_p(r)$.
\end{definition}

\begin{remark}
If $X$ is one-ended then $HDiv_g(\mathcal{Y}, \mathcal{Z})(r)$ always takes finite values. Additionally, if $\mathcal{Y}$ and $\mathcal{Z}$ are $k$--separated and $g_1$, $g_2$ are different minimal geodesics from $\mathcal{Y}$ to $\mathcal{Z}$, then by Lemma \ref{strongly_separated_lemma} and Lemma \ref{k_separated_lemma} we have that $HDiv_{g_1}(\mathcal{Y}, \mathcal{Z})(r - k) \le HDiv_{g_2}(\mathcal{Y}, \mathcal{Z})(r) \le HDiv_{g_1}(\mathcal{Y}, \mathcal{Z})(r + k)$. Hence, up to the usual equivalence on divergence functions, for $k$--separated hyperplanes it is often not relevant which minimal geodesic is used. 
\end{remark}

This section is devoted to proving the following two theorems which provide a connection between the hyperplane separation properties defined in the previous section and divergence in $X$. Theorem \ref{hdiv_bounds_theorem} gives bounds on $HDiv(\mathcal{Y}, \mathcal{Z})$, and Theorem \ref{div_bounds_theorem} gives bounds on $Div(X)$.   

\begin{theorem} \label{hdiv_bounds_theorem}
The following are true: 
\begin{enumerate}
\item Suppose $X$ is finite-dimensional and locally compact. $\mathcal{Y}$ and $\mathcal{Z}$ are $\infty$-connected if and only if $HDiv(\mathcal{Y}, \mathcal{Z})$ is constant. \label{hdiv_thm_constant}
\item If $\mathcal{Y}$ and $\mathcal{Z}$ are $k$--separated, then $HDiv(\mathcal{Y}, \mathcal{Z})$ is at least linear. \label{hdiv_thm_linear2}
\item If $\mathcal{Y}$ and $\mathcal{Z}$ are $k$--chain separated and $X$ is finite-dimensional, then $HDiv(\mathcal{Y}, \mathcal{Z})(R) \succeq \frac{1}{2} R\log_2(\log_2{R}) $.\label{hdiv_thm_super_linear}

\end{enumerate}
\end{theorem}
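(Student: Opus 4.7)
The proof of Theorem \ref{hdiv_bounds_theorem} proceeds by disk diagram analysis applied uniformly to all three parts. Fix an arbitrary combinatorial path $\alpha$ from $\mathcal{Y}$ to $\mathcal{Z}$ avoiding $B_p(r)$, with endpoints $u \in N(\mathcal{Y})$ and $v \in N(\mathcal{Z})$. Using Lemma \ref{minimal_area_lemma}, construct a combed disk diagram $D$ supported by the hyperplane-path sequence $\{\mathcal{Y}, \alpha, \mathcal{Z}, g\}$ with boundary $\{\gamma_Y, \alpha, \gamma_Z, g^{-1}\}$, where $\gamma_Y$ and $\gamma_Z$ are taken to lie in a single side of their respective hyperplanes (possible by convexity of the half-carriers, after adjusting $u$ and $v$ by at most one edge). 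This choice guarantees that each edge of $\gamma_Y$ (respectively $\gamma_Z$) is dual to a distinct hyperplane of $X$ transverse to $\mathcal{Y}$ (respectively $\mathcal{Z}$), and since $\alpha$ avoids $B_p(r)$, we have $|\gamma_Y|, |\gamma_Z| \ge r$ up to a bounded additive error.

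For Part \ref{hdiv_thm_constant}, in the forward direction, local compactness and finite-dimensionality imply that only finitely many hyperplanes pass through any compact set. Since $\mathcal{Y}$ and $\mathcal{Z}$ are $\infty$-connected, for every $r$ one can find a hyperplane $\mathcal{H}$ crossing both, whose intersection with $N(\mathcal{Y})$ is at distance exceeding $r + d(\mathcal{Y},\mathcal{Z})$ from $p$. Any combinatorial path in $N(\mathcal{H})$ from $N(\mathcal{H}) \cap N(\mathcal{Y})$ to $N(\mathcal{H}) \cap N(\mathcal{Z})$ has length at most $d(\mathcal{Y}, \mathcal{Z})$, because hyperplanes of $\mathcal{H}$ separating $\mathcal{Y} \cap \mathcal{H}$ from $\mathcal{Z} \cap \mathcal{H}$ are precisely the restrictions of hyperplanes of $X$ separating $\mathcal{Y}$ from $\mathcal{Z}$. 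Such a path stays outside $B_p(r)$, yielding a uniform bound on $HDiv$. The converse follows from Part \ref{hdiv_thm_linear2}, since non-$\infty$-connected hyperplanes are $k$-separated for some $k$, hence $HDiv$ is at least linear.

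For Part \ref{hdiv_thm_linear2}, analyze the dual curves of $D$ emanating from $\gamma_Y$. Each of the $\ge r$ edges of $\gamma_Y$ initiates a distinct maximal dual curve, and by the choice of $\gamma_Y$ no such curve can return to $\gamma_Y$ (distinct edges of $\gamma_Y$ map to distinct hyperplanes in $X$, while by Lemma \ref{no_monogons_lemma} combedness forbids a dual curve returning to its starting edge). Thus each terminates on $\alpha$, $\gamma_Z$, or $g$. At most $k$ terminate on $\gamma_Z$ by $k$-separation, at most $|g| = d(\mathcal{Y}, \mathcal{Z})$ terminate on $g$, so at least $r - k - d(\mathcal{Y},\mathcal{Z})$ terminate on $\alpha$, each using a distinct edge. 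Hence $|\alpha| \ge r - O(1)$.

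For Part \ref{hdiv_thm_super_linear}, the super-linear bound emerges from an iteration of the Part \ref{hdiv_thm_linear2} argument. Assume for contradiction that $|\alpha| < \tfrac{1}{2} \log_2 \log_2 R \cdot R$. Since we no longer have an a priori bound on hyperplanes crossing both $\mathcal{Y}$ and $\mathcal{Z}$, many dual curves from $\gamma_Y$ may terminate on $\gamma_Z$; applying the Ramsey-style Lemma \ref{ramsey_lemma} to $\gamma_Y$ extracts a set $S_1$ of $k$ pairwise non-intersecting hyperplanes crossing $\mathcal{Y}$. If all of these also crossed $\mathcal{Z}$, then setting $m=1$ in Definition \ref{thickly_connected_hyperplanes_def} would give $k$-chain connectedness, contradiction; so some hyperplane in $S_1$ terminates away from $\gamma_Z$, and the subdiagram of $D$ cut out between the two extremal hyperplanes of $S_1$ inherits a configuration with comparable hypotheses but a reduced length parameter of roughly $\sqrt{R}$. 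Iterating this reduction produces successive layers $S_2, S_3, \ldots$ of hyperplane sets with property II of Definition \ref{thickly_connected_hyperplanes_def} automatically enforced by nesting; after $\log_2 \log_2 R$ iterations, the length parameter shrinks to a constant, forcing the construction of a full $k$-chain sequence $S_1, \ldots, S_m$ connecting $\mathcal{Y}$ to $\mathcal{Z}$, contradicting $k$-chain separation. The main obstacle is this iteration: one must carefully verify that at each stage the nested subdiagram remains combed, that the selected hyperplanes in successive layers genuinely mutually intersect (property II), and that the length parameter cascades in a doubly-exponential manner to yield exactly the $\tfrac{1}{2}\log_2\log_2 R$ factor.
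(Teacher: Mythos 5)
Parts~\ref{hdiv_thm_constant} and~\ref{hdiv_thm_linear2} of your proposal are essentially correct and match the paper's approach (Lemmas~\ref{infinitely_connected_div_lemma} and~\ref{k_separated_div_lemma}): a combed diagram supported by a hyperplane-path sequence, a count of dual curves emanating from the $\mathcal{Y}$-side of the boundary with the $k$-separation and $|g|$ bounds accounting for those that escape to $Z$ and $g$ respectively, and the converse of part~\ref{hdiv_thm_constant} supplied by part~\ref{hdiv_thm_linear2}.

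Part~\ref{hdiv_thm_super_linear} is where your proposal has a genuine gap. The paper's proof (Lemma~\ref{chain_separated_div_lemma}) does not proceed by contradiction, does not embed a self-similar ``reduced length parameter of roughly $\sqrt{R}$'' subdiagram, and does not cascade doubly-exponentially downward. Instead, it sets $r = \log_2\log_2 R$, defines explicit offsets $c_i = 2^{i+1}r(K+d)$, and constructs directly a nested sequence of dual curves $H_0, H_1, \dots, H_r$ in a single combed disk diagram, where $H_n$ is the $c_{n-1}$'th dual curve to $H_{n-1}$. The central structural input is that $k$-chain separation forbids $K$ consecutive dual curves from $H_{n}$ reaching $Z$ (else a chain of length-$k$ layers would connect $\mathcal{Y}$ to $\mathcal{Z}$), so all but a bounded number of dual curves to $H_n$ must hit $\alpha$. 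The whole point of the geometric choice of $c_i$ is that $\sum_{i<r} c_i \approx 2r(K+d)2^r = o(R)$, so the intersection points $\mathcal{H}_n \cap \alpha$ all remain within distance $o(R)$ of $p$; since $\alpha$ avoids $B_p(R)$, each subsegment $\beta_n$ of $\alpha$ between consecutive $\mathcal{H}_{n-1}, \mathcal{H}_n$ has length $\ge R - c_n \ge R/2$, and there are $r$ of them, giving $|\alpha| \gtrsim rR$. Your sketch has none of this machinery: the ``$\sqrt{R}$'' reduction is unmotivated (no disk-diagram counting produces it), and you give no mechanism by which each of the $\log_2\log_2 R$ stages contributes a length of order $R$ to $\alpha$, which is what is actually needed. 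You also never use that $\alpha$ avoids the ball of radius $R$ inside the recursion, whereas that is precisely what controls each $|\beta_n|$ in the paper. As written, your part~\ref{hdiv_thm_super_linear} argument would not yield the stated bound without being replaced by something closer to the paper's explicit construction.
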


\begin{theorem} \label{div_bounds_theorem}
Suppose $X$ is essential, locally compact and with cocompact automorphism group. 
\begin{enumerate}[resume]
\item If $\mathcal{Y}$ and $\mathcal{Z}$ are $k$--separated, then $Div(X)$ is bounded below by a quadratic function. \label{div_thm_quad}
\item If $\mathcal{Y}$ and $\mathcal{Z}$ are $k$--chain separated, then $Div(X) \succeq \frac{1}{2} R^2 \log_2(\log_2(R))$. \label{div_thm_super_quad}
\item Suppose $X$ has $k$--alternating geodesics (Definition \ref{k_alternating_geodesics}). If $\mathcal{Y}$ and $\mathcal{Z}$ are symbolically $k$--chain separated then $HDiv(\mathcal{Y}, \mathcal{Z})$ is bounded below by a quadratic function and $Div(X)$ is bounded below by a cubic function. \label{div_thm_cubic}

\end{enumerate}
\end{theorem}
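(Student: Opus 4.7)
\medskip

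\noindent
\textbf{Proof Proposal.}
The plan is to reduce the three lower bounds on $Div(X)$ to matching lower bounds on $HDiv(\mathcal{Y},\mathcal{Z})$ and then invoke the Chaining Hyperplanes Theorem (Theorem \ref{chaining_hyperplanes_theorem}), which, under the standing hypotheses that $X$ is essential, locally compact, and has cocompact automorphism group, upgrades any lower bound $F(r)$ on $HDiv(\mathcal{Y},\mathcal{Z})$ to the lower bound $r F(r)$ on $Div(X)$. Each of the three statements is thus obtained in two steps: establish the correct $HDiv$ bound, then multiply by $r$.

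For parts (3) and (4) the $HDiv$ bounds are already supplied by Theorem \ref{hdiv_bounds_theorem}. In (3), $k$--separated hyperplanes satisfy $HDiv(\mathcal{Y},\mathcal{Z}) \succeq r$ by part (2) of that theorem, and chaining then yields $Div(X) \succeq r\cdot r = r^2$. In (4), $k$--chain separated hyperplanes give $HDiv(\mathcal{Y},\mathcal{Z}) \succeq \tfrac{1}{2}\log_2\log_2(R)\cdot R$ by part (3) of that theorem, and chaining then produces $Div(X) \succeq \tfrac{1}{2}\log_2\log_2(R)\cdot R^{2}$. These two parts are essentially formal consequences of results already in hand.

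The substantive new content is part (5). By the chaining principle it suffices to prove $HDiv(\mathcal{Y},\mathcal{Z}) \succeq r^2$ under the symbolic $k$--chain separation hypothesis combined with the $k$--alternating geodesics hypothesis on $X$. My approach is as follows. Let $g$ be a minimal geodesic from $\mathcal{Y}$ to $\mathcal{Z}$, set $p = g\cap \mathcal{Y}$, and let $\alpha$ be any path from $\mathcal{Y}$ to $\mathcal{Z}$ avoiding $B_p(r)$. Using Lemma \ref{minimal_area_lemma}, form a combed disk diagram $D$ supported by a hyperplane-path sequence built from $\mathcal{Y}$, $g$, $\mathcal{Z}$ and $\alpha$. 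Every dual curve to $g$ in $D$ must either exit through $\mathcal{Y}$, through $\mathcal{Z}$, or through $\alpha$. The symbolic chain separation hypothesis rules out the existence of a length-$k$ ladder of type-compatible hyperplanes connecting $\mathcal{Y}$ to $\mathcal{Z}$ across $D$; combined with the symbolic type data of Definition \ref{symbolically_chain_connected_def}(III), this forces many dual curves of $g$ to exit through $\alpha$. The $k$--alternating geodesics hypothesis provides a linear number of hyperplane-type alternation sites along $g$, and at each such site the symbolic obstruction applies afresh. Iterating the dual-curve counting argument used for Theorem \ref{hdiv_bounds_theorem}(3), but replacing the logarithmic recursion by a linear summation indexed by these alternation sites, upgrades the $\log\log$-linear bound on $HDiv$ to the desired quadratic bound, yielding $|\alpha| \succeq r^{2}$ and hence $Div(X) \succeq r^{3}$.

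The main obstacle is part (5): correctly bookkeeping the interaction between the type function $F$ of Definition \ref{symbolically_chain_connected_def}(III), the alternating structure of geodesics, and the combed dual-curve configuration in $D$. Specifically, I must show that a type obstruction is genuinely produced at every alternation site along $g$ and that these local obstructions contribute additively to $|\alpha|$ rather than being absorbed by one or two long dual curves. The technical heart of the argument is therefore an inductive dual-curve counting scheme inside $D$, structured by the alternation pattern of $g$, in the same spirit as the proof of Theorem \ref{hdiv_bounds_theorem}(3) but with its logarithmic recursion replaced by a linear one driven by the $k$--alternating hypothesis.
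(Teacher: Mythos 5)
Your overall strategy matches the paper exactly: Theorem \ref{div_bounds_theorem} is deduced by establishing a lower bound on $HDiv(\mathcal{Y},\mathcal{Z})$ for each separation hypothesis and then invoking Theorem \ref{chaining_hyperplanes_theorem} to multiply that bound by $r$. For the first two items your argument is identical to the paper's (Theorem \ref{hdiv_bounds_theorem} parts (\ref{hdiv_thm_linear2}) and (\ref{hdiv_thm_super_linear}) plus chaining). For the third item, the quadratic bound $HDiv(\mathcal{Y},\mathcal{Z})\succeq r^2$ that you correctly identify as the substantive step is the content of Lemma \ref{symbolically_separated_div_lemma} in the paper, so you have located the right missing piece. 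One remark on your sketch of that step: you organize the count around dual curves to $g$ and ``alternation sites along $g$,'' whereas the paper's Lemma \ref{symbolically_separated_div_lemma} instead iteratively constructs a nested sequence of hyperplanes $\mathcal{H}_0=\mathcal{Y},\mathcal{H}_1,\dots$ from dual curves to the previous $\mathcal{H}_{n-1}$, uses the $k$--alternating hypothesis to bound the constant $c_1 = M + d$ (the number of dual curves to $H_n$ that can reach $Z$ before a symbolic chain would be forced), and tracks the bookkeeping via $\tau_n$, the index of the last dual curve to $H_n$ missing $\alpha$. That scaffolding is what actually delivers the linear (rather than geometric) spacing, so if you pursue your version you will need to verify that the alternation-site decomposition of $g$ genuinely yields disjoint segments of $\alpha$ with linear lower bounds, which is not immediate since a single long dual curve to $g$ could a priori service many alternation sites; the paper's $\tau_n$ argument is designed precisely to prevent this absorption.
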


\begin{definition} \label{k_alternating_geodesics}
$X$ has \textit{$k$--alternating geodesics}, if there exists a constant $M$ so that every geodesic of length $M$ in $X$ crosses a set of hyperplanes $\{\mathcal{H}_1, ..., \mathcal{H}_k \}$ such that $\mathcal{H}_i$ and $\mathcal{H}_{i+1}$ are of non--intersecting type for all $i < k$. 
\end{definition}

\begin{lemma} \label{infinitely_connected_div_lemma}
Suppose $X$ is finite-dimensional, locally compact and that $\mathcal{Y}$ and $\mathcal{Z}$ are $\infty$-connected. There exists a constant $c$ such that for all $r$ and choice of geodesic $g$, $HDiv_g(\mathcal{Y}, \mathcal{Z}) (r) = c$. 
\end{lemma}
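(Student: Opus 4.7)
The plan is to set $c = d(\mathcal{Y}, \mathcal{Z})$, the combinatorial distance between the two hyperplanes, and show that $HDiv_g(\mathcal{Y}, \mathcal{Z})(r) = c$ for every $r$ and every choice of minimal geodesic $g$. Note that every minimal geodesic from $\mathcal{Y}$ to $\mathcal{Z}$ has length exactly $c$, so this value is intrinsic and does not depend on $g$.

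\textbf{Lower bound.} Any path from $\mathcal{Y}$ to $\mathcal{Z}$ has length at least $d(\mathcal{Y}, \mathcal{Z}) = c$, so in particular every path avoiding $B_p(r)$ has length $\ge c$. Hence $HDiv_g(\mathcal{Y}, \mathcal{Z})(r) \ge c$.

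\textbf{Upper bound.} Here is where the hypotheses are used. Because $X$ is finite-dimensional and locally compact, the closed ball $\overline{B_p(r+c)}$ is compact and therefore contains only finitely many cubes of $X$. In particular, $\mathcal{Y} \cap B_p(r+c)$ meets only finitely many cubes of $\mathcal{Y}$, so only finitely many hyperplanes of $X$ can cross $\mathcal{Y}$ at a point inside $B_p(r+c)$. Since infinitely many hyperplanes intersect both $\mathcal{Y}$ and $\mathcal{Z}$ by the $\infty$-connected hypothesis, we may select one such hyperplane $\mathcal{H}$ whose intersection point $q = \mathcal{H} \cap \mathcal{Y}$ lies outside $B_p(r+c)$. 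The carrier $N(\mathcal{H})$ contains a geodesic $\gamma$ from $\mathcal{Y}$ to $\mathcal{Z}$ of length equal to $d(\mathcal{Y}, \mathcal{Z}) = c$ (obtained by crossing $\mathcal{H}$ perpendicularly starting at $q$). Since one endpoint of $\gamma$ is at distance $> r+c$ from $p$ and $\gamma$ has length $c$, the triangle inequality ensures $\gamma$ lies entirely outside $B_p(r)$.

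Combining, $HDiv_g(\mathcal{Y}, \mathcal{Z})(r) = c$ for every $r$ and every minimal geodesic $g$. The only mild technical point is justifying that infinitely many crossings of $\mathcal{Y}$ must escape any compact neighborhood of $p$ in $\mathcal{Y}$; this follows immediately from local compactness and finite-dimensionality because each cube of $X$ supports only finitely many midcubes. No disk diagram machinery is required for this lemma — the argument is purely a compactness/pigeonhole observation combined with the existence of a length-$c$ geodesic inside the carrier of any hyperplane crossing both $\mathcal{Y}$ and $\mathcal{Z}$.
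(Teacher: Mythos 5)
Your proposal is correct in broad strokes and does reach $c = d(\mathcal{Y},\mathcal{Z})$, but it takes a genuinely different route from the paper. The paper builds a combed disk diagram supported by $\{\mathcal{Y}, \mathcal{H}_1, \mathcal{Z}, \mathcal{H}_2\}$ for two hyperplanes crossing both $\mathcal{Y}$ and $\mathcal{Z}$ and argues, via the combed-diagram conditions, that every dual curve crosses the full width and the diagram is forced to be a Euclidean rectangle; the far side of that rectangle is the required short avoidant path. You work directly in $X$ without disk diagrams, using local compactness and finite-dimensionality to extract a hyperplane $\mathcal{H}$ crossing both $\mathcal{Y}$ and $\mathcal{Z}$ far from $p$, and then travel across $N(\mathcal{H})$. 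That is a perfectly valid and arguably more elementary argument, and the rectangle in the paper is really the same geometric picture. Three points should be tightened, however. First, $\mathcal{H}\cap\mathcal{Y}$ is not a point $q$ but a hyperplane of $\mathcal{Y}$, typically unbounded; the correct phrasing is that finitely many hyperplanes can meet $B_p(r+c)$ at all (they must pass through one of its finitely many cubes), so among infinitely many hyperplanes crossing both $\mathcal{Y}$ and $\mathcal{Z}$ one can be chosen with $N(\mathcal{H}) \cap B_p(r+c) = \emptyset$, from which the triangle-inequality step follows. Second, the parenthetical "crossing $\mathcal{H}$ perpendicularly" is misleading: $\gamma$ runs \emph{along} $N(\mathcal{H})$, parallel to $\mathcal{H}$, crossing $\mathcal{Y}$ and $\mathcal{Z}$ transversally, not crossing $\mathcal{H}$. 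Third, the claim that such a $\gamma$ inside $N(\mathcal{H})$ has length exactly $c$ is true but not immediate; it requires observing that every hyperplane separating $\mathcal{Y}$ from $\mathcal{Z}$ must also cross $\mathcal{H}$ (since $\mathcal{H}$ meets both half-spaces), or equivalently invoking convexity/gatedness of the carrier $N(\mathcal{H})$. With those repairs the argument is sound and is a reasonable disk-diagram-free alternative to the paper's proof.
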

\begin{proof}
Fix a hyperplane $H_1$ intersecting both $\mathcal{Y}$ and $\mathcal{Z}$, and let $\mathcal{H}_2$ be another hyperplane intersecting both $\mathcal{Y}$ and $\mathcal{Z}$ a distance at least $r$ from $\mathcal{H}_1$. Let $D$ be a combed disk diagram supported by $\{ \mathcal{Y}, \mathcal{H}_1, \mathcal{Z}, \mathcal{H}_2 \}$. Every dual curve to $\mathcal{H}_1$ must intersect $\mathcal{H}_2$ and every dual curve to $Y$ must intersect $Z$. Hence, $D$ is an Euclidean strip of dimension $r \times d(\mathcal{Y}, \mathcal{Z})$ and the lemma follows. 
\end{proof}

\begin{lemma} \label{k_separated_div_lemma}
Suppose $\mathcal{Y}$ and $\mathcal{Z}$ are $k$--separated. There exists a constant $c$ such that $HDiv(\mathcal{Y}, \mathcal{Z}) (r) \ge r + c$. 
\end{lemma}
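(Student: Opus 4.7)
The plan is to use the disk diagram machinery of Section \ref{section_disk_diagrams} to bound the length of any path realizing $HDiv(\mathcal{Y}, \mathcal{Z})(r)$ from below by counting how many dual curves must emanate from a long carrier segment along $\mathcal{Y}$.

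More precisely, let $\alpha$ be any path from $\mathcal{Y}$ to $\mathcal{Z}$ avoiding $B_p(r)$, with endpoints $q \in N(\mathcal{Y})$ and $q' \in N(\mathcal{Z})$. Since $\alpha$ avoids $B_p(r)$, we have $d(p,q) \ge r$. Let $p_1$ denote the endpoint of $g$ on $N(\mathcal{Z})$. Using convexity of hyperplane carriers, I would fix a geodesic $Y$ in $N(\mathcal{Y})$ from $p$ to $q$ (so that $|Y| \ge r$) and a geodesic $Z$ in $N(\mathcal{Z})$ from $q'$ to $p_1$. Then $\{\mathcal{Y}, \alpha, \mathcal{Z}, g^{-1}\}$ is a hyperplane-path sequence whose realization as a closed loop is $Y * \alpha * Z * g^{-1}$, and by Lemma \ref{minimal_area_lemma} there exists a combed disk diagram $D$ supported by this sequence with this loop as boundary.

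Next I would count the maximal dual curves in $D$ dual to the segment $Y$. There are exactly $|Y| \ge r$ such dual curves, and by property $2$ of combed diagrams they are pairwise non-intersecting. Since $Y$ maps to a geodesic in $X$, distinct dual curves dual to $Y$ correspond to distinct hyperplanes of $X$ crossing $\mathcal{Y}$; moreover, no single dual curve can have both of its endpoints on $Y$ (either conclusion would force $Y$ to cross a single hyperplane of $X$ twice). Hence each such dual curve must terminate on one of $\alpha$, $Z$, or $g^{-1}$. The number terminating on $Z$ is at most $k$, since these are in bijection with distinct hyperplanes of $X$ crossing both $\mathcal{Y}$ and $\mathcal{Z}$ and $\mathcal{Y},\mathcal{Z}$ are $k$--separated. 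The number terminating on $g^{-1}$ is at most $|g|$, since each such dual curve occupies a distinct edge of $g^{-1}$.

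It follows that at least $|Y| - k - |g| \ge r - k - |g|$ dual curves dual to $Y$ terminate on distinct edges of $\alpha$, yielding $|\alpha| \ge r - k - |g|$. Setting $c = -k - |g|$ then proves the lemma. The main technical point is the identification of distinct dual curves with distinct hyperplanes of $X$, and in particular with distinct edges on each opposing boundary segment; this is precisely where the geodesic property of $Y$ and the $k$--separation hypothesis enter the estimate. Once the combed disk diagram is in hand, the rest is bookkeeping.
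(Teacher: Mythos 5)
Your proposal is correct and follows essentially the same argument as the paper: form a combed disk diagram supported by $\{\mathcal{Y}, \alpha, \mathcal{Z}, g^{-1}\}$, then count that at most $|g|$ dual curves to the $Y$-segment can end on $g^{-1}$ and at most $k$ can end on $Z$ (by $k$--separation), so at least $r - |g| - k$ must land on $\alpha$. The extra details you supply (that $|Y|\ge r$, and that no dual curve can begin and end on the geodesic $Y$) are left implicit in the paper but are the right justifications.
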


\begin{proof}
Let $g$ be any minimal geodesic from $\mathcal{Y}$ to $\mathcal{Z}$ and $p = g \cap \mathcal{Y}$. Let $\alpha$ be a path from $\mathcal{Y}$ to $\mathcal{Z}$ which avoids the ball $B_p(r)$. Let $D$ be a combed disk diagram supported by $\{ \mathcal{Y}, \alpha, \mathcal{Z}, g^{-1} \}$. At most $d = |g|$ dual curves to $\mathcal{Y}$ can intersect $g$ and at most $k$ dual curves to $\mathcal{Y}$ can intersect $\mathcal{Z}$. Hence, at least $r - d - k$ dual curves to $\mathcal{Y}$ intersect $\alpha$. Therefore, $|\alpha| \ge r - d - k$. 
\end{proof}

\begin{lemma} \label{chain_separated_div_lemma}
Suppose $X$ is finite-dimensional and $\mathcal{Y}$, $\mathcal{Z}$ are $k$--chain separated. Set $d = d_{X}(\mathcal{Y}, \mathcal{Z})$.  There exists a constant $R_0(d, k)$ such that for $R > R_0$, $HDiv(\mathcal{Y}, \mathcal{Z}) \ge \frac{1}{2} R\log_2(\log_2{R})$. 
\end{lemma}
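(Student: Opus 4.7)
The plan is a proof by contradiction. Suppose some path $\alpha$ from $\mathcal{Y}$ to $\mathcal{Z}$ avoids $B_p(R)$ and has length $L < \tfrac{1}{2} R \log_2 \log_2 R$. Fix a minimal geodesic $g$ from $\mathcal{Y}$ to $\mathcal{Z}$ with $|g| = d$, and by Lemma \ref{minimal_area_lemma} build a combed disk diagram $D$ supported by the hyperplane-path sequence $\{\mathcal{Y}, \alpha, \mathcal{Z}, g^{-1}\}$ with boundary path $Y * \alpha * Z * g^{-1}$. Since $\alpha$ avoids $B_p(R)$, the triangle inequality gives $|Y|, |Z| \geq R - d$.

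I would first count dual curves emanating from $Y$ in $D$: each edge of $Y$ is dual to a unique maximal dual curve, which (by absence of monogons and bigons in a combed diagram) must terminate on $\alpha$, on $Z$, or on $g^{-1}$. At most $d$ of these terminate on $g^{-1}$ and at most $L$ on $\alpha$, so at least $n := R - 2d - L$ distinct hyperplanes $\mathcal{H}_1, \ldots, \mathcal{H}_n$ in $X$ cross both $\mathcal{Y}$ and $\mathcal{Z}$. They are distinct because a second dual curve sharing the same hyperplane would create a bigon.

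The technical heart is to leverage $k$-chain separation to control this collection and extract a super-linear improvement over the obvious linear bound. I would build layers $S_1, S_2, \ldots$ of a candidate $k$-chain connected configuration in the sense of Definition \ref{thickly_connected_hyperplanes_def} by an iterative pigeonhole extraction: starting from $\{\mathcal{H}_i\}$, at each step one selects a maximal set of mutually non-intersecting hyperplanes (bounded in size by using the finite-dimensional clique bound $\omega \le \dim X$ on the intersection graph), passes to the subcollection consistent with that selection under the incidence requirements of Definition \ref{thickly_connected_hyperplanes_def}, and repeats. Each step consumes a doubly-exponential fraction of the supply, so the attainable chain depth grows like $\log_2 \log_2 n$; by $k$-chain separation this depth must be strictly less than $k$, yielding the double-logarithmic quantitative relation that underlies the bound.

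The main obstacle is converting this chain-depth analysis into the stated $\tfrac{1}{2} R \log_2 \log_2 R$ lower bound on $L$ by iterating across scales. I would iterate the argument across $\lfloor \log_2 \log_2 R \rfloor$ successive resolutions of the disk diagram: at each scale, isolate a pair of hyperplanes in the current diagram that is still $(k-1)$-chain separated, invoke Lemma \ref{new_combed_diagram_lemma} to set up a fresh combed subdiagram supported on it, and rerun the counting to harvest a further additive contribution to $L$. The delicate step will be arranging these scales so their contributions are genuinely disjoint and therefore add rather than double-count, which I would enforce by carefully tracking the combed structure through Lemmas \ref{combed_prop_lemma} and \ref{remove_intersecting_curves_lemma}; once that disjointness is secured, summing the per-scale linear contributions yields $L \geq \tfrac{1}{2} R \log_2 \log_2 R$ as soon as $R$ exceeds a threshold $R_0(d, k)$.
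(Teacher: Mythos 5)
Your opening setup is on the right track: building a combed disk diagram supported by $\{\mathcal{Y},\alpha,\mathcal{Z},g^{-1}\}$ and counting dual curves emanating from $Y$ is exactly how the paper begins. The gap is in the ``technical heart,'' which is where the actual proof does its work.

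First, the ``iterative pigeonhole extraction'' paragraph misreads the structure of Definition \ref{thickly_connected_hyperplanes_def}. In a family of hyperplanes crossing both $\mathcal{Y}$ and $\mathcal{Z}$, a clique in the intersection graph is a set of \emph{pairwise intersecting} hyperplanes, and that is what $\dim X$ bounds; pairwise \emph{non-intersecting} ones form an independent set. What $k$-chain separation with $m=1$ gives you is that fewer than $k$ pairwise non-intersecting hyperplanes can simultaneously cross both $\mathcal{Y}$ and $\mathcal{Z}$; combined with the finite-dimension clique bound (via Lemma \ref{ramsey_lemma}) this already shows $\mathcal{Y},\mathcal{Z}$ are $k'$-separated, but that only gives you the linear bound of Lemma \ref{k_separated_div_lemma}. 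Moreover, in Definition \ref{thickly_connected_hyperplanes_def} the integer $k$ bounds the \emph{width} of each layer $S_i$, while the number of layers $m$ is arbitrary, so the claim that ``by $k$-chain separation this depth must be strictly less than $k$'' is not what the definition says, and the inference that ``chain depth grows like $\log_2\log_2 n$'' does not follow.

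Second, in your iteration ``isolate a pair of hyperplanes in the current diagram that is still $(k-1)$-chain separated'' is not what is needed and would not obviously be available. The paper's argument keeps $\mathcal{Y}$ and $\mathcal{Z}$ fixed at the two ends throughout. It builds a nested sequence of combed diagrams $D_0\subset D_1\subset\cdots\subset D_r$ (via Lemma \ref{new_combed_diagram_lemma}) by choosing, at step $n$, a hyperplane $\mathcal{H}_n$ which is the $c_{n-1}$-th dual curve emanating from $H_{n-1}$, where $c_i = 2^{i+1}r(K+d)$ grows geometrically and $r=\log_2\log_2 R$. The precise lever supplied by $k$-chain separation is a bound at \emph{every} stage: one cannot have $K$ dual curves to $H_n$ terminating on $Z$, because together with the $rK$ dual curves to each earlier $H_j$ guaranteed by the inductive bookkeeping, they would visibly assemble a $k$-chain connecting $\mathcal{Y}$ to $\mathcal{Z}$ inside the diagram. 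That uniform constant bound on ``shortcuts to $Z$'' is what forces essentially all dual curves at each scale to land on $\alpha$, making the segments $\beta_n$ disjoint and of length roughly $R-c_n$, so $|\alpha|\gtrsim\sum_{n\le r}(R-c_n)\gtrsim rR$. Your proposal gestures at additive contributions across $\log_2\log_2 R$ scales, but without the geometric choice of $c_n$, the fixed endpoints, and in particular without the per-scale bound on curves escaping to $Z$, the disjointness and the length bounds you would need do not materialize.
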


\begin{figure}[h]
\centering
\begin{overpic}[scale=.7]{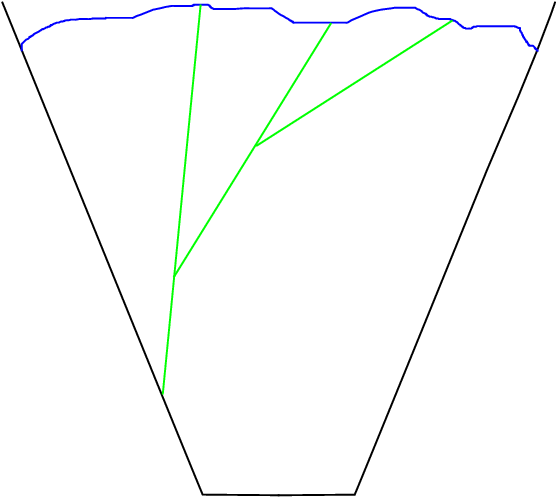}
\put(0,35){$\mathcal{Y} = \mathcal{H}_0$}
\put(80, 35){$\mathcal{Z}$}
\put(50,1){$g$}
\put(25, 70){$\mathcal{H}_1$}
\put(45, 75){$\mathcal{H}_2$}
\put(70, 75){$\mathcal{H}_3$}
\put(10, 87){$\alpha$}
\put(50, 30){$D_3$}
\put(85,80){$\alpha_3$ }

\end{overpic}
\caption{Graphic for the proof Lemma \ref{chain_separated_div_lemma}. The disk diagram $D_3$ is supported by the hyperplane-path sequence $\{ \mathcal{H}_0,   \mathcal{H}_1, \mathcal{H}_2, \mathcal{H}_3, \alpha_3, \mathcal{Z}, g^{-1} \}$. }
\label{fig_k_separated_lemma}
\end{figure}

\begin{proof}
By Lemma \ref{ramsey_lemma}, let $K>0$ be the constant, only depending on $k$ and $X$, such that a geodesic of length $K$ in $X$ must intersect at least $k$ pairwise non-intersecting hyperplanes. Let $g$ be a minimal geodesic from $\mathcal{Y}$ to $\mathcal{Z}$ and set $p = g \cap Y$. Fix $R>0$ and let $r =  \log_2(\log_2{R})$. Let $\alpha$ be a combinatorial path from $\mathcal{Y}$ to $\mathcal{Z}$ which avoids the ball $B_p(R)$.

Set $\mathcal{H}_0 = \mathcal{Y}$. Let $D_0$ be a combed disk diagram with boundary supported by the hyperplane-path sequence $\bar{A}_0 = \{\mathcal{H}_0 , \alpha, \mathcal{Z}, g^{-1} \}$ and with boundary path $\bar{P}_0 = \{H_0, \alpha, Z, g^{-1} \}$. Orient $H_0$ from $g$ to $\alpha$. For $i \in \mathbb{Z}_{\ge0}$ set $c_i = 2^{i+1} r(K+d)$ where $d = |g|$.    

For $n \le r$, define inductively $H_n$ as the $c_{n-1}$'th dual curve to $H_{n-1}$ in $D_{n-1}$ and assume $H_n$ intersects $\alpha$. Orient $H_n$ from $H_{n-1}$ to $\alpha$. Define $\alpha_n$ as the subpath of $\alpha$ from $H_n$ to $Z$ and  $\beta_n$ as the subpath of $\alpha$ from $\mathcal{H}_{n-1}$ to $\mathcal{H}_{n}$. Define $D_n$ as the combed diagram with boundary supported by 

\[ \bar{A}_n = \{\mathcal{H}_0, \mathcal{H}_1, ..., \mathcal{H}_n, \alpha_n, \mathcal{Z}, g^{-1} \} \]

and with boundary path:

\[ \bar{P}_n = \{H_0, H_1, ..., H_n, \alpha_n, Z, g^{-1} \} \]

 obtained from $D_{n-1}$ by Lemma \ref{new_combed_diagram_lemma}. For $j \le n$, define the paths in $D_n$: 

\[ T_j = H_{j+1} * H_{j+2} * ... * H_n * \alpha_n \mbox{ (set } T_n= \alpha_n \mbox{)} \]
\[ B_j = Z^{-1} * g^{-1} * H_0 * H_1 * ... * H_{j-1} \]
\[ B_j' = g^{-1} * H_0 * H_1 * ... * H_{j-1}  \]

We will show through induction the following are true for all $n \le r$: 

\begin{description}
\item[A.] $D_{n}$ is well defined. In particular, $H_n$ intersects $\alpha$. 
\item[B.] For $j < n$, $rK$ dual curves to $H_j$ in $D_{n}$, intersect $T_j$. 
\item[C.] For $n >0$, $|\beta_n| \ge R - c_{n}$ 
\end{description}

Given the diagram $D_{n-1}$, in order to define $D_{n}$ we must first show $H_{n-1}$ has at least $c_{n-1}$ dual curves emanating from it in $D_{n-1}$. Since $D_n$ only needs to be defined for $n \le r$, we can do this by showing $|H_r| > 0$ in $D_r$. Because $\alpha$ avoids the $R$-ball about $p$, we have:

\[|H_{r}| \ge R - \sum_{j=0}^{r-1}c_j \]
\[ = R - \sum_{j=0}^{r-1}2^{j+1}r(K+d) = R - 2r(K+d)\sum_{j=0}^{r-1}2^{j}  \]

Using the formula for a geometric series: 

\[|H_{r}| \ge R - 2r(K+d)(2^r - 1)\]
\[ = R - \Big( 2 (K + d) \log_2( \log_2{R}) \Big) \Big( 2 \log_2{R}-1 \Big)\]

There then exists a constant $m_1(k, d)$, such that for $R > m_1$, we have that $|H_r| \ge 0$. Thus when $R > m_1$, $H_n \ge c_n$ for $n < r$.  

We now turn to the base case, $n=0$. Hypothesis A. follows immediately. Let $Q$ denote the set of dual curves to $H_0$. At most $d$ of these can intersect $g$.  Additionally, we cannot have $K$ dual curves in $Q$ intersect $Z$. For if they did, this would imply $\mathcal{Y}$ and $\mathcal{Z}$ are $k$--chain connected, a contradiction. We then have that $R - K - d$ dual curves to $H_0$ intersect $\alpha = T_0 \subset D_0$. 

The following inequalities imply that $R - K - d > rK$:

\[ R > c_0  = 2r(K+d)> rK + K + d \]

Hypothesis B. is then true, settling the base case. 

For the general case, assume $n + 1 \le r$ and that hypotheses A, B and C are satisfied for any $n' < n + 1$. Let $Q = \{Q_1, Q_2, ..., Q_m\}$ be the set of dual curves in $D_n$ emanating from $H_{n}$ ordered by the orientation on $H_{n}$. It follows at most $C = \sum_{j=0}^{n-2}c_j + d$ dual curves in $Q$ can intersect $B_{n}'$ (the sum does not go to $n-1$ since the diagram is combed). Using the formula for a geometric series we have: 

\[ C  =  c_{n-1} - 2r(K+d) + d \le c_{n-1} \]

Additionally, we cannot have $K$ curves in $Q$ intersect $Z$. For then, there is a subset of $k$ of these dual curves, $S_1 = \{H_{1}^1, ..., H_{k}^1 \} \subset Q$, corresponding to pairwise non-intersecting hyperplanes, which intersect $Z$. By induction hypothesis B, $k$ dual curves to $H_{n-1}$, $S_2 = \{H_{1}^2, H_{2}^2, ..., H_{k}^2 \}$, corresponding to pairwise non-intersecting hyperplanes, intersect every curve in $S_1$. Now, $H_{k}^1 * H_{k}^2$ is a path from $H_{n-1}$ to $Z$. By the induction hypothesis B and the pigeonhole principle, $k$ dual curves emanating from $H_{n-2}$, $S_3 = \{H_{1}^3, ..., H_{k}^3 \}$, intersect either $H_{k}^1$ or $H_{k}^2$. Hence, every curve in $S_3$ intersects every curve in $S_1$ or $S_2$.  Proceeding this way we can show $\mathcal{Y}$ is $k$--chain connected to $\mathcal{Z}$, a contradiction. 

It follows for $j \ge c_{n-1} + K + d \ge C + K + d$, $Q_j$ must intersect $\alpha$. In particular, $H_{n+1} = Q_{c_{n}}$ must intersect $\alpha$. Hence, using Lemma \ref{new_combed_diagram_lemma} we can define $D_{n+1}$, proving hypothesis A. 

Note that for $j$ such that, $c_{n-1} + K + d \le j \le c_n$, $Q_j$ must intersect $\alpha$. A direct calculation gives that there are at least $rK$ of such curves. Because of this, and the boundary combinatorics preservation property of Lemma \ref{new_combed_diagram_lemma}, hypothesis B is satisfied in $D_{n+1}$. 

We are left to prove hypothesis C. Note that for $j > c_n$, $Q_j$ intersects $\beta_{n+1}$ in $D_n$. By using the fact that $\alpha$ avoids $B_p(R)$ we have:  

\[|\beta_{n+1}| \ge R - \sum_{j=0}^{n}c_j = R - 2r(K+d)\sum_{j=0}^n2^j \]
\[ = R - 2r(K+d)(2^{n+1} - 1) = R - c_{n+1} + 2r(K+d) \]
\[ > R - c_{n+1} \]

This proves induction hypothesis C and completes the induction.

We have thus divided $\alpha$ into a set of disjoint subpaths $\{\beta_i \}$ for each of which we have a lower bound. This allows us to compute a lower bound for the length of $\alpha$: 

\[ |\alpha| \ge \sum_{i=1}^{r} |\beta_{i}| = \sum_{i=1}^{r}{ (R - c_{i} )}  \]
\[ = rR - \sum_{i=0}^{r - 1}c_{i+1} \]
\[ = rR - 4r(K+d)\sum_{i=0}^{r - 1}2^{i} \]
\[ = rR - 4r(K+d)(2^{r} - 1) \]
\[ = R \log_2(\log_2(R)) - 4(K+d)\log_2(\log_2{R}) \Big( \log_2{R} - 1  \Big) \]

There then exists a constant $m_2(k,d)$ such that for $R > m_2$

\[ |\alpha| \ge \frac{R \log_2(\log_2(R))}{2} \] 
\end{proof}

We are now in a position to prove Theorem \ref{hdiv_bounds_theorem}.

\begin{proof}[Proof of Theorem \ref{hdiv_bounds_theorem} ]
The theorem follows from the above three lemmas.

\end{proof}

The proof of the following lemma is similar to that of Lemma \ref{chain_separated_div_lemma}. However, to get the quadratic bound on the $HDiv$ function the proof requires a different counting technique. 

\begin{lemma} \label{symbolically_separated_div_lemma}
Suppose $X$ is essential, locally compact, has $k$--alternating geodesics and that Aut($X$) acts cocompactly.  Let $\mathcal{Y}$ and $\mathcal{Z}$ be symbolically $k$--chain separated. Set $d = d_X( \mathcal{Y}, \mathcal{Z})$. There exists a constant $R_0(d,k)$ such that for $R > R_0$, $HDiv(\mathcal{Y}, \mathcal{Z})$ is bounded below by a quadratic function.  
\end{lemma}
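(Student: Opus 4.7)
The plan is to follow the inductive dual-curve extraction scheme of Lemma \ref{chain_separated_div_lemma}, but exploit the stronger hypotheses of symbolic $k$-chain separation together with $k$-alternating geodesics to force the gap sequence $c_n$ to be bounded by a constant $c = c(k, d)$, rather than to grow exponentially. With constant gaps, one can iterate $r \sim R/c$ times, each stage contributing a subpath $\beta_i$ of $\alpha$ of length at least $R - O(1)$, yielding $|\alpha| \succeq rR \sim R^2$.

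Concretely, set $p = g \cap \mathcal{Y}$ for a minimal geodesic $g$ from $\mathcal{Y}$ to $\mathcal{Z}$, let $\alpha$ be any path from $\mathcal{Y}$ to $\mathcal{Z}$ avoiding $B_p(R)$, and inductively build combed diagrams $D_n$ supported by $\{\mathcal{Y}, \mathcal{H}_1, \ldots, \mathcal{H}_n, \alpha_n, \mathcal{Z}, g^{-1}\}$ via Lemma \ref{new_combed_diagram_lemma}, where $\mathcal{H}_n$ is the hyperplane associated to the $c$-th dual curve emanating from $H_{n-1}$ in the direction of $\alpha$. As before, orient each $H_j$ from the back boundary toward $\alpha$, define $T_j$, $B_j'$, and $\beta_n$ in the analogous way, and maintain the three running inductive hypotheses: that $D_n$ is well-defined, that many dual curves to $H_j$ in $D_n$ reach $T_j$, and that $|\beta_n| \ge R - c_n$.

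The key technical step, which replaces the exponential counting of Lemma \ref{chain_separated_div_lemma}, is to show that only a constant number of dual curves to $H_n$ can intersect the back boundary $B_n'$, and only a constant number can reach $Z$ without crossing $\alpha_n$. Suppose the latter fails. Invoking the $k$-alternating geodesic property, each sufficiently long subsegment of $H_n$ carries a block of $k$ hyperplanes of pairwise non-intersecting type; since $Aut(X)$ acts cocompactly there are only finitely many orbit classes, so by the pigeonhole principle one may extract, across iterated stages, a stable type pattern. Combining the resulting layers of hyperplanes crossing $\mathcal{H}_n, \mathcal{H}_{n-1}, \ldots, \mathcal{H}_1, \mathcal{Y}$, arranged on the $\alpha$-side of $g$ to place $g$ in the opposite half-space at each step, produces the sequences $S_1, \ldots, S_m$ of Definition \ref{symbolically_chain_connected_def} connecting $\mathcal{Y}$ to $\mathcal{Z}$, a contradiction. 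An analogous chain-building argument handles returning dual curves to $B_n'$.

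The principal obstacle is arranging the extracted layers to satisfy \emph{all five} properties of Definition \ref{symbolically_chain_connected_def} simultaneously. Properties I, II, and IV follow from combedness and transversality of the extracted dual curves; property III requires a type-matching step, which I would handle by replacing $m$ by a multiple of the number of orbit classes and reselecting blocks so that the type function $F$ stabilizes; property V is the most delicate and is secured by always choosing the alternating-type hyperplane blocks from the portion of $H_j$ lying strictly between $g$ and $\alpha$. Once the constant-gap bound $c_n \le c$ is established, the calculation
\[ |\alpha| \ge \sum_{i=1}^{r} |\beta_i| \ge \sum_{i=1}^{r} (R - c) \ge \frac{rR}{2} \]
holds for $R$ large and $r = \lfloor R/(2c) \rfloor$, giving the required quadratic lower bound on $HDiv(\mathcal{Y}, \mathcal{Z})$.
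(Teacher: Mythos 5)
Your overall architecture matches the paper's: build combed diagrams $D_n$ inductively via Lemma~\ref{new_combed_diagram_lemma}, run a subclaim ruling out too many dual curves reaching $Z$ by assembling a symbolic $k$-chain, and sum up the lengths $|\beta_i|$. The subclaim for $Z$ is essentially what the paper proves (it bounds that count by a constant $c_1 = M+d$). However, there is a genuine gap in your treatment of dual curves returning to the \emph{back} boundary $B_n'$.

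You assert that "only a constant number of dual curves to $H_n$ can intersect the back boundary $B_n'$" and use this to justify a constant gap at every stage, so that $H_{n+1}$ can always be taken as the $c$-th dual curve from $H_{n-1}$. This pointwise constant bound is not established, and it is not what the paper proves. In the paper, the gap at stage $n$ is $m_n = \tau_n + c$, where $\tau_n$ (the number of dual curves to $H_n$ landing back on $B_n$) has only a \emph{linear-in-$n$} a priori bound --- induction hypothesis B gives that exactly $c$ dual curves to $H_j$ cross $T_j \supset H_n$ for each $j < n$, so up to $(n-1)c + d$ back-curves could emanate from $H_n$; combedness only excludes the single adjacent hyperplane $H_{n-1}$, not $H_j$ for $j \le n-2$. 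The quadratic bound is recovered by a global double-counting: the partial sums $\sum_{j < n} \tau_j$ are bounded by $2nc$, i.e.\ on \emph{average} each $\tau_j$ is $O(1)$, but the paper never claims (and does not need) that each is individually $O(1)$. The consequence is that your claim $|\beta_i| \ge R - c$ with a fixed $c$ does not follow from what you have shown; the paper instead gets $|\beta_n| \ge |H_{n-1}| - \tau_{n-1} - c$ together with $|H_n| \ge R/2$, and the sum $\sum \tau_j \le 2nc$ is what lets the final telescoped estimate close at $\asymp R^2$. To repair your proposal you would either need to prove a genuine pointwise $O(1)$ bound on $\tau_n$ (not clear, and I see no argument towards it), or switch to the variable gap $m_n = \tau_n + c$ and import the aggregate bound on $\sum \tau_j$ as the paper does. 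Your handling of properties I--V is also more optimistic than the paper's: the paper works with "almost symbolically chain connected" (dropping V) throughout the subclaim and only contradicts the global hypothesis once the full chain from $\mathcal{Y}$ to $\mathcal{Z}$ is assembled, which sidesteps the type-stabilization and half-space bookkeeping you gesture at.
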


\begin{proof}
Fix $R > 0$. Let $g$ be a minimal geodesic from $\mathcal{Y}$ to $\mathcal{Z}$, $p = g \cap \mathcal{Y}$, and $\alpha$ a $B_p(R)$ avoidant combinatorial path from $\mathcal{Y}$ to $\mathcal{Z}$. Let $M$ be the $k$--alternating constant from Definition \ref{k_alternating_geodesics}. Set $c_1 = M + d$.  Let $c_2$ be the number of different hyperplane types in $X$ (this is finite since $Aut(X)$ acts cocompactly). Set $c = c_1(c_2)^k + 2d$. Set $r = \frac{R}{6c}$ and set $\mathcal{H}_0 = \mathcal{Y}$. Let $D_0$ be a combed diagram with boundary supported by $\bar{A}_0 = \{ \mathcal{Y}, \alpha, \mathcal{Z}, g^{-1} \}$ and with boundary path $\bar{P}_0 = \{H_0, \alpha, Z, g^{-1} \}$. Orient $H_0$ from $g$ to $\alpha$. 

Assume we have a combed disk diagram $D_n$ supported by 

\[ \bar{A}_n = \{\mathcal{H}_0, \mathcal{H}_1, ..., \mathcal{H}_n, \alpha_n, \mathcal{Z}, g^{-1} \} \]

and with boundary path:

\[ \bar{P}_n = \{H_0, H_1, ..., H_n, \alpha_n, Z, g^{-1} \} \]

where $\alpha_n$ is a subpath of $\alpha$ from $\mathcal{H}_n$ to $Z$. Assume $H_n$ is oriented from $H_{n-1}$ to $\alpha_n$. Define $\beta_n$ as the subpath of $\alpha$ from $\mathcal{H}_{n-1}$ to $\mathcal{H}_{n}$.  

Let $Q = \{Q_1, Q_2, ..., Q_l \}$ be the set of dual curves to $H_n$ labeled sequentially by the orientation on $H_n$. Define $\tau_n$ to be the largest integer such that $Q_{\tau_n}$ does not intersect $\alpha$. Set $m_n = \tau_n + c$, and let $H_{n+1} = Q_{m_n}$.  For $j \le n$, define the paths in $D_n$: 

\[ T_j = H_{j+1} * H_{j+2} * ... * H_n * \alpha_n \mbox{ (set } T_n= \alpha_n \mbox{)} \]
\[ B_j = Z^{-1} * g^{-1} * H_0 * H_1 * ... * H_{j-1} \]
\[ B_j' = g^{-1} * H_0 * H_1 * ... * H_{j-1} \subset B_j \]

We assume by induction the following are true for $n < r$: 

\begin{description}
\item[A.] $D_{n}$ is well defined.
\item[B.] In $D_n$, for $j < n$, exactly $c$ dual curves to $H_{j}$ intersect $T_j$ and exactly $\tau_j$ dual curves to $H_j$ intersect $B_j$. 
\item[C.] For $n>0$, $|\beta_{n}| \ge |H_{n-1}| - \tau_{n-1} - c$ 
\end{description}

For the case when $n=0$, \textbf{A} follows immediately and \textbf{B}, \textbf{C} are trivial. We now turn to the case $n=1$. Note that in $D_0$, at most $d$ dual curves can intersect $g$ and $M$ dual curves can intersect $Z$ (since $\mathcal{Y}$ and $\mathcal{Z}$ are symbolically $k$--chain separated). Hence, $\tau_0 \le d + M$, and $Q_{m_0}$ does intersect $\alpha$. Therefore, by using Lemma \ref{new_combed_diagram_lemma}, we can define $D_1$. It is also clear that \textbf{B} holds in $D_1$. Furthermore, in $D_0$, it is clear that for $j > m_0$, $Q_j$ intersects $B_1$. Hence, \textbf{C} is true as well. This shows the base cases $n=0$ and $n=1$ are true. 

We now turn to the general case. Suppose the lemma is true for all integers $n'$ such that $n' \le n < r$. Consider the diagram $D_n$, and let $Q= \{ Q_1, ..., Q_l \}$ be the set of dual curves to $H_{n}$ labeled sequentially by the orientation on $H_{n}$. 

\begin{subclaim} We will first show that we cannot have $c_1$ curves in $Q$ intersect $Z$. 
\end{subclaim}
\begin{proof}
We say two hyperplanes are almost symbolically $k$-chain connected, if they satisfy every condition of Definition \ref{symbolically_chain_connected_def} except maybe condition V. Suppose for a contradiction the claim is not true. It follows that $\mathcal{H}_n$ is almost symbolically $k$--chain connected to $\mathcal{Z}$. Assume for some $i \le n$, $\mathcal{H}_i$ and $\mathcal{Z}$ are almost symbolically $k$--chain connected by sequences: 

\[ S_1 = \{ \mathcal{P}_{1}^{1}, \mathcal{P}_{2}^{1}, ..., \mathcal{P}_{k}^{1} \} \]
\[ S_2 = \{ \mathcal{P}_{1}^{2}, \mathcal{P}_{2}^{2}, ..., \mathcal{P}_{k}^{2} \} \] 
\[ ... \]
\[ S_m = \{ \mathcal{P}_{1}^{m}, \mathcal{P}_{2}^{m}, ..., \mathcal{P}_{k}^{m} \} \]

Additionally, we want this structure to be seen in the disk diagram $D_n$. So assume for every $\mathcal{P}_i^j$, there is a corresponding dual curve $P_i^j$ in $D_n$. Also assume every dual curve corresponding to a hyperplane in $S_1$ intersects $H_i$, every dual curve corresponding to a hyperplane in $S_m$ intersects $Z$, and $P_{1}^1 * P_1^{2} * ... * P_1^{m}$ is well defined as a path in $D_n$ from $H_i$ to $Z$. 

By the induction hypothesis, there are $c$ dual cuves to $H_{i-1}$ which intersect $T_j$. Let $p$ be one such curve. Since $D_n$ is combed, $p$ cannot intersect $T_{j+1}$. Hence, $p$ must intersect $P_{1}^1 * P_1^{2} * ... * P_1^{m}$, and, consequently $p$ must intersect $P_1^{j}$ for some $j$. However, since the curves in $S_j$ are pairwise disjoint, $p$ must intersect every curve in $S_j$.  

There are only $c_2^k$ different possibilities for the tuple $Type(S_j)$. Hence, by the pigeonhole principle, there must be $k$ dual curves to $H_{n-1}$, $S = \{P_1, ..., P_k\}$, such that the corresponding hyperplanes to the sequence: 

\[ S = \{ P_{1}, P_{2}, ..., P_{k} \} \]
\[ S_j = \{ P_{1}^{1}, P_{2}^{1}, ..., P_{k}^{1} \} \]
\[ S_{j+1} = \{ P_{1}^{2}, P_{2}^{2}, ..., P_{k}^{2} \} \] 
\[ ... \]
\[ S_m = \{ P_{1}^{m}, P_{2}^{m}, ..., P_{k}^{m} \} \]

almost symbolically $k$--chain connect $\mathcal{H}_{n-1}$ to $\mathcal{Z}$. 

Proceeding in this manner, this would imply $\mathcal{Y}$ is symbolically $k$--chain connected to $\mathcal{Z}$, a contradiction. This finishes the proof of the subclaim.
\end{proof}

We next want to show that $m_n$ is well defined. By induction hypothesis \textbf{B} and the subclaim, at most $C = (n-2)c + d + c$ curves in $Q$ can intersect $B_n$. Note that $\frac{R}{6} = rc \ge nc + c \ge C + c$. Hence, for $m_n$ to be well defined, it is enough to know that $|H_n| \ge \frac{R}{6}$.

$\sum_{j=0}^{n-1}\tau_j$ is a count of how many dual curves have both endpoints on $B_n$. At most $(n-1)c$ such curves have endpoints on $H_i$ and $H_j$ for some $i < j < n$. At most $d$ such curves have endpoints on $g$. Furthermore, by the subclaim  at most $(n-1)c_1$ such curves have endpoints on  $H_i$ and $Z$ for some $i < n$. Hence, 

\[ \sum_{j=0}^{n-1}\tau_j \le  (n-1)c + d + (n-1)c_1 \le 2nc \]  

Since $\alpha$ does not intersect $B_p(R)$, we have that: 

\[ |H_n|  \ge R - \sum_{i=0}^{n-1}{|H_i|} \]
\[ = R - \sum_{i=0}^{n-1}{(\tau_i + c)} = R - nc - \sum_{i=0}^{n-1}{\tau_i} \]
\[ \ge R -3nc \]
\[ \ge R - 3rc = \frac{R}{2}  \]

Thus, $m_n$ is well defined. 

We are left to prove induction hypothesis \textbf{C}. For $j > m_n$, $Q_j$ intersects $\beta_{n+1}$. Thus, 

\[ |\beta_n| \ge |H_{n-1}| - \tau_{n-1} - c \]

This finishes the induction. 

We have broken $\alpha$ into a union of $r$ subpaths $\{\beta_j \}$ for which we have a lower bound. We can then calculate a lower bound for $\alpha$: 

\[ 
|\alpha| \ge \sum_{i=0}^{r-1}|\beta_{i+1}|  \ge  \sum_{i=0}^{r-1}|H_i| - \tau_i - c
\]

\[
 \ge \sum_{i=0}^{r-1} \Big( \frac{R}{2} - \tau_i - c\Big) 
\]

\[ 
\ge \frac{rR}{2} - \sum_{i=0}^{r-1} (\tau_i) -rc
\]

\[ 
\ge \frac{rR}{2} - 3rc = \frac{R^2}{12c} - \frac{R}{2}
\]

\end{proof}

The following theorem allows us to deduce stronger lower bounds for $Div(X)$ through the existence of just two hyperplanes with strong enough separation properties. The proof of the theorem involves constructing an infinite sequence of nested hyperplanes. This is done primarily through the machinery developed in \cite{CS}. 

\begin{theorem} \label{chaining_hyperplanes_theorem}
Let $X$ be essential, locally compact and with cocompact automorphism group. Suppose $HDiv(\mathcal{Y}, \mathcal{Z}) \succeq F(r)$ for a pair of non-intersecting hyperplanes $\mathcal{Y}$ and $\mathcal{Z}$ in $X$. It then follows that $Div(X) \succeq rF(r)$.  
\end{theorem}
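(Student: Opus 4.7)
The plan is to use the Double Skewering Lemma of Caprace--Sageev \cite{CS} to produce a bi-infinite nested chain of translates of the pair $(\mathcal{Y}, \mathcal{Z})$, and then argue that any divergent path between suitably placed basepoints must cross each pair in the chain, with each crossing contributing at least $F(r)$ to the length.

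First, I would apply the Double Skewering Lemma to the nested pair of half-spaces $\mathcal{Z}^+ \subsetneq \mathcal{Y}^+$ (orienting so that one pair of complementary half-spaces is nested, as is forced by $\mathcal{Y}\cap\mathcal{Z}=\emptyset$). The hypotheses of essentiality, local compactness, and cocompact $\mathrm{Aut}(X)$ yield an automorphism $g \in \mathrm{Aut}(X)$ with $g\mathcal{Y}^+ \subsetneq \mathcal{Z}^+$. Iterating gives the bi-infinite nested chain
\[
\cdots < g^{-1}\mathcal{Y} < g^{-1}\mathcal{Z} < \mathcal{Y} < \mathcal{Z} < g\mathcal{Y} < g\mathcal{Z} < g^2\mathcal{Y} < g^2\mathcal{Z} < \cdots,
\]
which shows $g$ is hyperbolic with an axis $A$ crossing every hyperplane in the chain. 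Let $L$ denote the translation length of $g$, fix a point $p$ on $A$ to serve as the basepoint of the divergence computation, and let $q \in \mathcal{Y}$ be the basepoint used to define $HDiv(\mathcal{Y},\mathcal{Z})$.

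Given $r$ large, set $n = \lfloor r/(4L) \rfloor$ and take $a = g^n p$, $b = g^{-n}p$, so that $d(a,b) \le 2nL \le r/2$. Let $\alpha$ be any combinatorial path from $a$ to $b$ avoiding the ball $B_p(\delta r - \lambda)$ from the divergence definition. Since $a$ and $b$ lie on opposite sides of $g^i\mathcal{Y}$ and of $g^i\mathcal{Z}$ for each $-n < i < n$, the nesting forces $\alpha$ to contain a subpath $\alpha_i$ with endpoints on $g^i\mathcal{Y}$ and $g^i\mathcal{Z}$. The relevant basepoint $g^i q$ on $g^i\mathcal{Y}$ satisfies
\[
d(p, g^i q) \le d(p, g^i p) + d(g^i p, g^i q) = |i|L + d(p,q),
\]
so the triangle inequality shows $\alpha_i$ avoids $B_{g^i q}\bigl(\delta r - \lambda - |i|L - d(p,q)\bigr)$. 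For $|i|$ bounded by a small fixed fraction of $r/L$, this radius is $\asymp r$, and $g$-equivariance of $HDiv$ combined with the hypothesis $HDiv(\mathcal{Y},\mathcal{Z}) \succeq F(r)$ yields $|\alpha_i| \succeq F(r)$. Summing $|\alpha| \ge \sum_i |\alpha_i|$ over the $\asymp r/L$ valid indices gives $|\alpha| \succeq rF(r)$, as desired.

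The main obstacle is Step 1: Caprace--Sageev's skewering results are usually stated with an additional hypothesis such as $\mathrm{Aut}(X)$ having no global fixed point at infinity. Under the stated hypotheses this is either part of a standard reduction or implicitly built in, but verifying that the appropriate skewering automorphism $g$ exists in full generality is the delicate point. Once the bi-infinite nested chain is produced, the remainder of the argument is a direct length estimate using triangle inequalities and the equivariance of the hyperplane divergence function under cube complex automorphisms.
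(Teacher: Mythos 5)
Your proposal is correct and follows essentially the same approach as the paper: apply the Double Skewering Lemma of Caprace--Sageev to obtain a hyperbolic automorphism $\gamma$ whose axis crosses an infinite nested chain of $\gamma$-translates of the pair $(\mathcal{Y},\mathcal{Z})$, and observe that any avoidant path between points $\gamma^{\pm n}p$ must contain roughly $r$ disjoint subpaths, each of which is an $HDiv$-path for a translated pair and hence of length $\succeq F(r)$. The paper states this quite tersely, while you spell out the endpoint choices, equivariance of $HDiv$, and the ball-avoidance estimate explicitly; your concern about the fixed-point-at-infinity hypothesis for Double Skewering is also acknowledged by the paper in a remark following Theorem 5.8.
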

\begin{proof}

Let $\mathcal{Y}^+$ and $\mathcal{Z}^+$ be half-spaces associated to $\mathcal{Y}$ and $\mathcal{Z}$ such that $\mathcal{Y}^+ \subsetneq \mathcal{Z}^+$. 

By the Double Skewering Lemma in \cite{CS}, there exists a $\gamma \in G$ so that $\gamma \mathcal{Z}^+ \subsetneq \mathcal{Y}^+$. Note that $HDiv(\gamma \mathcal{Z}, \mathcal{Z}) \ge F(r)$ since $\mathcal{Y}$ separates $\mathcal{Z}$ from $\gamma \mathcal{Z}$.  By Lemma 2.3 in \cite{CS}, $\gamma$ is hyperbolic and its axis, $l$, intersects $\mathcal{Y}$ and $\mathcal{Z}$ ($\gamma$ skewers both $\mathcal{Y}$ and $\mathcal{Z}$). 

Now, we have a chain of equally spaced pairs of hyperplanes $\{\gamma^n Z, \gamma^{n-1}Z \}$ along $l$ (isometry moves hyperplanes through $l$). Hence, the divergence of the geodesic $l$ is at least $rF(r)$. 
\end{proof}

\begin{proof}[Proof of Theorem \ref{div_bounds_theorem} ]
The  statements in Theorem \ref{div_bounds_theorem} are now an easy consequence of Lemma \ref{symbolically_separated_div_lemma}, Theorem \ref{hdiv_bounds_theorem} and Theorem \ref{chaining_hyperplanes_theorem}. 
\end{proof}

\begin{remark}
We note that Theorem \ref{chaining_hyperplanes_theorem}, Theorem \ref{div_bounds_theorem} and Theorem \ref{higher_degree_theorem} all hold under the different assumption that $X$ is essential, finite-dimensional and $Aut(X)$ has no fixed point at infinity. This is true since the Double Skewering Lemma from \cite{CS} also holds under these assumptions.
\end{remark}

\section{Higher Degree Polynomial Divergence in CAT(0) Cube Complexes} \label{section_higher_bounds}

We define when a pair of hyperplanes are \textit{degree $d$ $k$--separated}. We show the divergence of two degree $d$ $k$--separated hyperplanes is bounded below by a polynomial of degree $d$. 

\begin{definition} \label{degree_d_separated_def}
Hyperplanes $\mathcal{H}_1$ and  $\mathcal{H}_2$ are degree $1$ $k$--separated if $\mathcal{H}_1$ and $\mathcal{H}_2$ are $k$--separated. 

$\mathcal{H}_1$ and  $\mathcal{H}_2$ are degree $d$ $k$--separated if they are $k$--separated, and for either $i=1$ or $i=2$ every geodesic of length $k$ contained in $N(\mathcal{H}_i)$ intersects a pair of degree $(d-1)$ $k$--separated hyperplanes.
\end{definition}

The main theorem of this section is the following:

\begin{theorem} \label{higher_degree_div_theorem}
Suppose $X$ is finite-dimensional. If $\mathcal{Y}$ and $\mathcal{Z}$ are degree $d$ $k$--separated hyperplanes, then $Hdiv(\mathcal{Y}, \mathcal{Z})$ is bounded below by a polynomial of degree $d$. 
\end{theorem}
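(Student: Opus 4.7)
The plan is to induct on $d$, following a strategy similar in spirit to Lemma \ref{chain_separated_div_lemma} but peeling off one ``layer'' of separation per recursive call instead of a logarithmic number of layers in a single diagram. The base case $d = 1$ is exactly Lemma \ref{k_separated_div_lemma}. For the inductive step, assume the theorem for degree $(d-1)$ $k$--separated hyperplanes, and, without loss of generality, that $\mathcal{Y}$ is the hyperplane of the pair $\{\mathcal{Y}, \mathcal{Z}\}$ whose carrier-geodesics satisfy the property in Definition \ref{degree_d_separated_def}. Fix a minimal geodesic $g$ from $\mathcal{Y}$ to $\mathcal{Z}$, set $p = g \cap \mathcal{Y}$, let $\alpha$ be an arbitrary $B_p(R)$--avoidant combinatorial path from $\mathcal{Y}$ to $\mathcal{Z}$, and invoke Lemma \ref{minimal_area_lemma} to produce a combed disk diagram $D$ supported by $\{\mathcal{Y}, \alpha, \mathcal{Z}, g^{-1}\}$.

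I would then enumerate the dual curves $Q_1, Q_2, \ldots$ to $H_0 = Y$ in the orientation of $Y$ starting at $p$. The $k$--separation of $\mathcal{Y}, \mathcal{Z}$ together with combedness guarantees that all but at most $k + |g|$ of the $Q_j$ terminate on $\alpha$, at points $p_1, p_2, \ldots$ ordered along $\alpha$. Partition the indices into consecutive size-$k$ blocks $G_i = \{ik+1, \ldots, (i+1)k\}$. The edges of $Y$ labeled by $G_i$ form a length-$k$ geodesic in $N(\mathcal{Y}) \subset X$, so by Definition \ref{degree_d_separated_def} this geodesic meets a pair of degree $(d-1)$ $k$--separated hyperplanes $\mathcal{P}_i = \mathcal{Q}_{a_i}$ and $\mathcal{P}_i' = \mathcal{Q}_{b_i}$ with $a_i, b_i \in G_i$; neither can equal $\mathcal{Y}$ since a hyperplane cannot be $k$--separated from itself.

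The corresponding subpath $\alpha_i$ of $\alpha$ from $p_{a_i}$ to $p_{b_i}$ is a combinatorial path from $\mathcal{P}_i$ to $\mathcal{P}_i'$, and these subpaths are pairwise disjoint in $i$ because the blocks are consecutive. I would next verify that $\alpha_i$ avoids a ball of radius $R - C(i+1)k$ about the endpoint $q_i$ on $\mathcal{P}_i$ of some minimal geodesic from $\mathcal{P}_i$ to $\mathcal{P}_i'$: the point $\mathcal{Q}_{a_i} \cap \mathcal{Y}$ lies within distance $(i+1)k$ of $p$ along $\mathcal{Y}$, and $q_i$ is within distance at most $k$ of it by Lemma \ref{k_separated_lemma}, so the triangle inequality combined with $\alpha_i \cap B_p(R) = \emptyset$ gives the claim. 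Applying the inductive hypothesis to $(\mathcal{P}_i, \mathcal{P}_i')$ yields $|\alpha_i| \succeq (R - C(i+1)k)^{d-1}$.

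Restricting to $0 \leq i \leq \lfloor R/(4Ck) \rfloor$ keeps $R - C(i+1)k \geq R/2$, which for $R$ large enough exceeds the threshold needed for the inductive bound. Summing the disjoint contributions,
\[
|\alpha| \;\geq\; \sum_i |\alpha_i| \;\succeq\; \left\lfloor \frac{R}{4Ck} \right\rfloor \cdot \left(\frac{R}{2}\right)^{d-1} \;\succeq\; R^d,
\]
yielding the desired degree-$d$ lower bound on $HDiv(\mathcal{Y}, \mathcal{Z})$. The main obstacle I expect is verifying that the pair of degree $(d-1)$ $k$--separated hyperplanes promised by the hypothesis really is realized among the dual curves $\mathcal{Q}_j$ of each block--this requires checking that the length-$k$ subsegment of $Y$ is mapped into $N(\mathcal{Y})$ in $X$ as an honest geodesic consistent with the diagram's boundary combinatorics--and, secondarily, converting avoidance of $B_p(R)$ into a clean avoidance statement on a minimal $\mathcal{P}_i$-to-$\mathcal{P}_i'$ geodesic so that the inductive $HDiv$ bound applies without additive loss.
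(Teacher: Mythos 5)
Your proposal is correct and follows essentially the same strategy as the paper's proof: induct on $d$ with base case Lemma~\ref{k_separated_div_lemma}, build a combed diagram supported by $\{\mathcal{Y}, \alpha, \mathcal{Z}, g^{-1}\}$, read off the dual curves emanating from the $\mathcal{Y}$-side of the boundary, extract about $R/k$ consecutive pairs of degree $(d-1)$ $k$--separated hyperplanes (the paper's subsequence $\{K_1,\dots,K_m\}$ with $d(K_i,p)\le ki+|g|+k$ is exactly your size-$k$ blocking), apply the inductive hypothesis to the corresponding subpaths of $\alpha$, and sum.  The two concerns you flag are handled the same way the paper handles them: the $\mathcal{Y}$-side of the boundary path of a diagram supported by a hyperplane-path sequence is by definition a geodesic in $N(\mathcal{Y})$, so each size-$k$ block directly triggers Definition~\ref{degree_d_separated_def}, and the paper likewise invokes Lemma~\ref{k_separated_lemma} (as you do) to move the center of the avoided ball from $\mathcal{K}_i\cap\mathcal{Y}$ to the basepoint of a minimal $\mathcal{K}_i$-to-$\mathcal{K}_{i+1}$ geodesic at only a bounded additive cost.
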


By combining Theorem \ref{chaining_hyperplanes_theorem} and \ref{higher_degree_div_theorem} we immediately get the following:

\begin{theorem} \label{higher_degree_theorem}
Let $X$ be an essential, locally compact CAT(0) cube complex with cocompact automorphism group. If $X$ contains a pair of degree $d$ $k$--separated hyperplanes, then $Div(X)$ is bounded below by a polynomial of degree $d+1$. 
\end{theorem}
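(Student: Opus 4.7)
The plan is to obtain Theorem \ref{higher_degree_theorem} as an essentially immediate synthesis of two results already in hand. Suppose $\mathcal{Y}, \mathcal{Z}$ are a pair of degree $d$ $k$--separated hyperplanes in $X$. Since $X$ is locally compact with cocompact automorphism group, it is in particular finite-dimensional, so the hypotheses of Theorem \ref{higher_degree_div_theorem} are satisfied. Applying that theorem, I obtain a polynomial $F(r)$ of degree $d$ such that $HDiv(\mathcal{Y}, \mathcal{Z}) \succeq F(r)$.

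The second and final step is to promote this hyperplane-divergence estimate to an estimate on $Div(X)$. The hypotheses on $X$ (essential, locally compact, cocompact automorphism group) are precisely those of Theorem \ref{chaining_hyperplanes_theorem}, which therefore yields
\[
Div(X) \;\succeq\; r \, F(r),
\]
and the right-hand side is a polynomial of degree $d+1$. This concludes the derivation.

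So in the present statement there is no genuine obstacle: all the real work is packaged into the two theorems being cited. The hard content lives inside Theorem \ref{higher_degree_div_theorem}, whose proof should be an induction on $d$ whose base case ($d=1$) is exactly Lemma \ref{k_separated_div_lemma}, and whose inductive step refines the dual-curve counting scheme used in Lemma \ref{chain_separated_div_lemma} and Lemma \ref{symbolically_separated_div_lemma}: a long detour $\alpha$ avoiding $B_p(R)$ is cut into subpaths using the defining recursive property of degree $d$ $k$--separation, each of which inherits a polynomial lower bound from the inductive hypothesis. The other non-trivial ingredient sits inside Theorem \ref{chaining_hyperplanes_theorem}, which uses the Double Skewering Lemma of \cite{CS} to produce a bi-infinite chain of translates $\{\gamma^n \mathcal{Z}\}$ along a hyperbolic axis; the extra factor of $r$ in the bound $rF(r)$ comes from summing the $HDiv$ contribution across roughly $r$ such consecutive pairs.

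Because Theorem \ref{higher_degree_theorem} is stated as an immediate corollary, I would write the actual proof as a two-sentence deduction: invoke Theorem \ref{higher_degree_div_theorem} to produce the polynomial $F$ of degree $d$ with $HDiv(\mathcal{Y}, \mathcal{Z}) \succeq F(r)$, then invoke Theorem \ref{chaining_hyperplanes_theorem} to conclude $Div(X) \succeq r F(r)$, a polynomial of degree $d+1$.
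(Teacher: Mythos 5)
Your proposal matches the paper's own proof exactly: the paper states Theorem \ref{higher_degree_theorem} as an immediate consequence of combining Theorem \ref{higher_degree_div_theorem} (degree $d$ $k$--separation gives a degree $d$ polynomial lower bound on $HDiv(\mathcal{Y},\mathcal{Z})$) with Theorem \ref{chaining_hyperplanes_theorem} (which promotes $HDiv \succeq F(r)$ to $Div(X) \succeq rF(r)$). Your remark that local compactness plus cocompact automorphism group supplies finite-dimensionality is a correct and useful observation that makes the chain of hypotheses explicit.
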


\begin{proof}[Proof of Theorem \ref{higher_degree_div_theorem} ]

The base case, $d = 1$, follows from \ref{hdiv_thm_linear2} of Theorem \ref{hdiv_bounds_theorem}. For the general case, assume the claim is true for degree $d-1$ $k$--separated hyperplanes. Suppose $\mathcal{Y}$ and $\mathcal{Z}$ are degree $d$ $k$--separated. Let $g$ be a minimal geodesic from $\mathcal{Y}$ to $\mathcal{Z}$ and let $p = g \cap \mathcal{Y}$. Fix $R > 0$, and let $\alpha$ be a path from $\mathcal{Y}$ to $\mathcal{Z}$ which avoids the ball $B_p(R)$. Let $D$ be a combed disk diagram supported by $\{ \mathcal{Y}, \alpha, \mathcal{Z}, g^{-1} \}$. 

Orient $Y \subset D$ from $g$ to $\alpha$, and let $A = \{H_1, H_2, ..., H_n \}$ be dual curves to $Y$ sequentially ordered by the orientation on $Y$. Since $\alpha$ does not intersect $B_r(p)$, we have that $n \ge r$. Since $D$ is combed and since $\mathcal{Y}$ and $\mathcal{Z}$ are $k$--separated, it follows for $i > |g| + k$, $H_i$ intersects $\alpha$. By Definition \ref{degree_d_separated_def}, there is a subsequence $B = \{ K_1, K_2, ..., K_m \} \subset A$ such that:

\begin{enumerate}
\item $K_i$ intersects $\alpha$
\item For $i$ odd, $K_i$ and $K_{i+1}$ are degree $d-1$ $k$--separated. 
\item $m \ge \frac{(r - k - |g|)}{k}$
\item $d(K_i, p) \le ki + |g| + k$
\end{enumerate}

For $i$ odd, let $\alpha_i$ be the segment of $\alpha$ from $\mathcal{K}_i$ to $\mathcal{K}_{i+1}$. Note that $\alpha_i$ is a path from $\mathcal{K}_i$ to $\mathcal{K}_{i+1}$ which avoids the ball $B_{(r - ki - |g| - k)}(\mathcal{K}_i \cap \mathcal{Y})$. By the induction hypothesis and Lemma \ref{k_separated_lemma}, $|\alpha_i| \ge (r - ki - |g| - k)^{d-1}$. Since we have linearly many segments $\{\alpha_i\}$ whose length is bounded below by a degree $d-1$ polynomial, it follows the length of $\alpha$ is bounded below by a degree $d$ polynomial. 
\end{proof}

\section{Right-Angled Coxeter Group Divergence} \label{section_racg_divergence}

In the next two subsections, we wish to apply the theorems from previous sections to the case of right-angled Coxeter groups (RACGs for short). 

Let $\Gamma$ be the graph associated to a RACG, $W_{\Gamma}$. Let $\Gamma^c$ be the graph complement of $\Gamma$ and let $I$ be the set of isolated vertices in $\Gamma^c$. i.e., $I = \{ v \in V(\Gamma^c) ~ | ~ Link(v) = \emptyset\}$.   

$I$ forms a clique in $\Gamma$, and $\Gamma$ is the graph join of the induced subgraph corresponding to $I$ with the induced subgraph corresponding to $\Gamma - I$. Consequently, $W_{(\Gamma - I)}$ is finite index in $W_{\Gamma}$. Divergence is a quasi-isometry invariant, hence divergence results for $W_{(\Gamma - I)}$ apply to $W_{\Gamma}$. 

\textbf{We will from now on assume, without loss of generality, that $\Gamma^c$ has no isolated vertices for all RACGs considered.} The Davis complex for $W_{\Gamma}$ under this assumption is essential.

The following definition is a construction used in \cite{DT}.

\begin{definition}[$\Gamma$-complete word] \label{complete_word_def}
Given a graph $\Gamma$ which is not a join, let $w_0 = s_1...s_k$ be a word with the property that for every generator $s \in \Gamma$, there exists an $i$ such that $s_i = s$. Furthermore, $m(s_i, s_{i+1}) = \infty$ for all $1 \le i < k$ and $m(s_1, s_k) = \infty$. Since $\Gamma$ is not a join, it is always possible to define $w_0$, although $w_0$ is not unique. We call such a word a \textit{$\Gamma$-complete word} and always denote it by $w_0$. 
\end{definition}

We use the definition of a CFS graph used in \cite{BFHS} and \cite{DT} (defined below). An \textit{induced square} of a graph $\Gamma$ is an embedded 4--cycle. 
 
\begin{definition} \label{cfs_def}
Given a graph $\Gamma$, define $\square (\Gamma)$ as the graph whose vertices are induced squares of $\Gamma$. Two vertices in $\square ( \Gamma )$ are adjacent if and only if the corresponding induced squares in $\Gamma$ have two non-adjacent vertices in common. For a set of induced squares $S \subset \square ( \Gamma)$, define the \textit{support} of $S$ to be all vertices in $\Gamma$ which are contained in some square in $S$. We say $\Gamma$ is \textit{CFS} if $\square (\Gamma)$ contains a component whose support is $V(\Gamma)$. 
\end{definition}

\begin{remark}
In \cite{BFHS}, the graph join of a CFS graph with a clique graph is still CFS. With the assumption that $\Gamma^c$ has no isolated vertices, such a graph is not possible and so we omit this from the definition. We note again, however, that the RACG corresponding to a graph that is a join with a clique is commensurable with the RACG corresponding to the graph. So the results in this section still hold in full generality.  
\end{remark}

\subsection{Characterization of Linear Divergence}

The authors of \cite{BFHS} characterize which right-angled Coxeter groups exhibit linear divergence and the triangle-free case is done in \cite{DT}. For completeness and as a warm up for the quadratic case, we provide another proof here of this characterization.

\begin{theorem}
If $\Gamma$ is a join then $Div(W_{\Gamma})$ is linear. Otherwise, $Div(W_{\Gamma})$ is at least quadratic.  
\end{theorem}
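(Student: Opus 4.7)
The plan is to treat the two directions separately.

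For the join direction, suppose $\Gamma = \Gamma_1 * \Gamma_2$ is a non-trivial join. Under the standing assumption that $\Gamma^c$ has no isolated vertices, each $\Gamma_i$ must contain a pair of non-adjacent vertices; otherwise some vertex of $\Gamma_i$ would be adjacent in $\Gamma$ to every other vertex, and hence isolated in $\Gamma^c$. Consequently each $W_{\Gamma_i}$ contains an infinite dihedral subgroup and is infinite. Since $W_\Gamma \cong W_{\Gamma_1} \times W_{\Gamma_2}$, and a direct product of two infinite finitely generated groups has linear divergence---one can detour around any avoided ball by first translating in one factor and then the other---the linear case follows.

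For the non-join direction, my strategy is to invoke Theorem \ref{div_bounds_theorem} (\ref{div_thm_quad}), which reduces the problem to exhibiting a pair of non-intersecting $k$--separated hyperplanes in the Davis complex $\Sigma_\Gamma$ for some finite $k$. The key combinatorial tool is the $\Gamma$--complete word $w_0 = s_1 s_2 \cdots s_k$ of Definition \ref{complete_word_def}, which exists since $\Gamma$ not being a join is equivalent to $\Gamma^c$ being connected. Because consecutive letters of $w_0$ do not commute and $m(s_k, s_1) = \infty$, every positive power of $w_0$ is a geodesic word in $W_\Gamma$. I take $\mathcal{Y}$ to be the $s_1$--labeled hyperplane dual to the edge $e \to s_1$ in the Cayley graph, and $\mathcal{Z} = w_0 \mathcal{Y}$, the $s_1$--labeled hyperplane through $w_0 \to w_0 s_1$. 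These are distinct non-intersecting hyperplanes, since the geodesic word $w_0 s_1$ crosses each of them exactly once.

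To bound the number of hyperplanes meeting both $\mathcal{Y}$ and $\mathcal{Z}$, suppose $\mathcal{K}$ is such a hyperplane, labeled by a generator $t$. Then $t \in Link(s_1)$, a finite set. Using Lemma \ref{minimal_area_lemma} construct a combed disk diagram $D$ supported by $\{\mathcal{Y}, w_0, \mathcal{Z}, \mathcal{K}\}$, where the second piece is the geodesic $w_0$ from $e$ to $w_0$ and the last is a geodesic on $N(\mathcal{K})$. Each edge of $w_0$ is dual to a dual curve in $D$ which, because $w_0$ is geodesic and $D$ has no bigons, cannot return to $w_0$, and so must exit across $\mathcal{Y}$, $\mathcal{Z}$, or $\mathcal{K}$. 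Tracking these exits forces strong commutation constraints on the letters of $w_0$ with $s_1$ and $t$; combined with the fact that $w_0$ visits every generator and the assumption that $\Gamma^c$ has no isolated vertices, this yields a uniform bound $k = k(\Gamma)$ on the number of possible hyperplanes $\mathcal{K}$. An application of Theorem \ref{div_bounds_theorem} (\ref{div_thm_quad}) then gives the desired quadratic lower bound on $Div(W_\Gamma)$.

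The main obstacle is making the disk diagram bookkeeping in the third paragraph precise enough to rule out configurations in which many distinct dual curves exit through $\mathcal{K}$ without forcing the relevant commutation relations, and to extract a bound on $k$ depending only on $\Gamma$. If the direct argument proves unwieldy, a cleaner route is to observe that when $\Gamma$ is not a join, $\Sigma_\Gamma$ is irreducible, and apply the Rank Rigidity Theorem of Caprace--Sageev \cite{CS} to produce a rank one isometry; its axis then supplies a pair of strongly separated (in particular $0$--separated) hyperplanes, after which Theorem \ref{div_bounds_theorem} (\ref{div_thm_quad}) concludes the argument.
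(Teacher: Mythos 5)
Your first paragraph (the join direction) matches the paper's argument exactly. Your choice of hyperplanes in the second paragraph also works and is essentially equivalent to the paper's (the paper takes $\mathcal{Y}$ dual to the first letter $s_1$ and $\mathcal{Z}$ dual to the last letter $s_k$ of $w_0$, whereas you take two $s_1$--hyperplanes separated by one copy of $w_0$; both give the same situation).

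The real issue is the third paragraph, where you honestly flag an unresolved obstacle in the disk diagram bookkeeping. That obstacle does not need to be resolved, because the paper's argument is both stronger and much simpler, and does not use disk diagrams at all at this step: $\mathcal{Y}$ and $\mathcal{Z}$ are actually \emph{strongly separated}, i.e.\ $0$--separated, not merely $k$--separated for some mysterious $k$. The observation is as follows. The hyperplanes dual to the interior letters $s_2, \ldots, s_{k-1}$ of $w_0$ all separate $\mathcal{Y}$ from $\mathcal{Z}$ (they cross the geodesic segment between them). Any hyperplane $\mathcal{H}$ that intersects both $\mathcal{Y}$ and $\mathcal{Z}$ therefore has a point in $\mathcal{Y}$ and a point in $\mathcal{Z}$, i.e.\ points on both sides of each such separating hyperplane, and, being connected, must cross each of them. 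It also crosses $\mathcal{Y}$ and $\mathcal{Z}$ themselves. Since $w_0$ is $\Gamma$--complete, the hyperplanes dual to the letters $s_1, \ldots, s_k$ realize every generator type of $\Gamma$; so the type $s$ of $\mathcal{H}$ satisfies $m(s, t) = 2$ for every $t \in V(\Gamma)$. That makes $s$ isolated in $\Gamma^c$, contradicting the paper's standing assumption. No disk diagram and no finite bound $k(\Gamma)$ is needed; one gets $k = 0$ outright, and Theorem~\ref{div_bounds_theorem}(\ref{div_thm_quad}) applies. Your suggested fallback via the Rank Rigidity Theorem of Caprace--Sageev would also conclude the proof, but as you note it is heavier machinery, and the paper's one-paragraph type argument is preferable.
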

\begin{proof}
Suppose $\Gamma$ is a join. It follows $W_{\Gamma} = W_{\Gamma_1} \times W_{\Gamma_2}$. By the assumption that $\Gamma^c$ has no isolated vertices, both $W_{\Gamma_1}$ and $W_{\Gamma_2}$ are infinite. Hence, $Div(W_{\Gamma})$ is linear. 

Now suppose $\Gamma$ is not a join. Let $w_0 = s_1s_2...s_k$ be a $\Gamma$--complete word. Let $\mathcal{Y}$ be the hyperplane dual to the letter $s_1$ in $w_0$ and $\mathcal{Z}$ the hyperplane dual to the letter $s_k$ in $w_0$ in the Davis complex of $W_{\Gamma}$. Since, $m(s_1, s_k) = \infty$, it follows $\mathcal{Y}$ and $\mathcal{Z}$ do not intersect. Similarly, any hyperplane dual to the letter $s_j$ in $w_0$ for $1 < j < k$, does not intersect $\mathcal{Y}$ or $\mathcal{Z}$. 

We will show $\mathcal{Y}$ and $\mathcal{Z}$ are strongly separated. Suppose, for a contradiction, some hyperplane $\mathcal{H}$ intersects both $\mathcal{Y}$ and $\mathcal{Z}$. Let $\mathcal{H}$ be of type $s \in V(\Gamma)$. It follows that for every $j$ such that $1 \le j \le k$, the hyperplane through the letter $s_j$ in $w_0$ intersects $\mathcal{H}$. Hence, for every $t \in \Gamma$, $m(t,s) = 2$. But this implies that $s$ is isolated in $\Gamma^c$, a contradiction. 

Since $\mathcal{Y}$ and $\mathcal{Z}$ are strongly separated, by Theorem \ref{div_bounds_theorem}, $Div(W_{\Gamma})$ is at least quadratic. 

\end{proof} 

\subsection{Characterization of Quadratic Divergence}

We use results from Section \ref{section_div_cube_complex} to characterize quadratic divergence in RACGs and show there is a gap between quadratic and cubic divergence in RACGs.  

\begin{theorem} \label{racg_quadratic_div_theorem}
Suppose $\Gamma$ is not CFS and is not a join, then $W_\Gamma$ has divergence greater or equal to a cubic polynomial.
\end{theorem}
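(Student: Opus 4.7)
The plan is to apply part~\ref{div_thm_cubic} of Theorem~\ref{div_bounds_theorem} to the Davis complex $\Sigma_{\Gamma}$. This complex is essential (by our standing assumption that $\Gamma^c$ has no isolated vertex), locally compact, and admits a cocompact action of $W_{\Gamma}$. I will use the parameter $k = 2$: two hyperplanes of $\Sigma_{\Gamma}$ are of non-intersecting type precisely when their types are non-adjacent vertices of $\Gamma$, so a symbolic $2$-chain structure on hyperplanes should translate into a walk in $\square(\Gamma)$, which is exactly the combinatorial object entering the CFS condition.

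First I verify that $\Sigma_{\Gamma}$ has $2$-alternating geodesics. Since $\Gamma$ is not a join it is in particular not a clique, so any reduced word whose letters pairwise commute represents an element of a finite parabolic subgroup and hence has bounded length. Therefore every sufficiently long combinatorial geodesic in $\Sigma_{\Gamma}$ crosses two hyperplanes whose types are non-adjacent in $\Gamma$. Next, let $w_0 = s_1 \cdots s_l$ be a $\Gamma$-complete word and, as in the linear-divergence classification earlier in this section, let $\mathcal{Y}$ and $\mathcal{Z}$ be the hyperplanes dual to the first and last letter of $w_0$. These are non-intersecting, unbounded, and strongly separated, hence in particular $2$-separated. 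The remaining task is to show that $\mathcal{Y}$ and $\mathcal{Z}$ are not symbolically $2$-chain connected.

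Suppose for contradiction they are, witnessed by sequences $S_1, \ldots, S_m$ with $S_i = \{\mathcal{H}_1^i, \mathcal{H}_2^i\}$ and type function $F$ as in Definition~\ref{symbolically_chain_connected_def}. Property~(I) forces $\{\mathrm{type}(\mathcal{H}_1^i), \mathrm{type}(\mathcal{H}_2^i)\}$ to be a non-edge of $\Gamma$, while property~(III) forces the types of $S_i$ together with those of $S_{F(i)}$ to span an induced $4$-cycle $\sigma_i$ in $\Gamma$, i.e.\ a vertex of $\square(\Gamma)$. Since $\sigma_i$ and $\sigma_{F(i)}$ share the non-adjacent pair carried by $S_{F(i)}$, they are adjacent in $\square(\Gamma)$, so the squares $\sigma_1, \sigma_{F(1)}, \sigma_{F^2(1)}, \ldots$ all lie in a single connected component $\mathcal{C}$ of $\square(\Gamma)$. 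Using property~(IV) to link this component to $\mathcal{Y}$ and $\mathcal{Z}$, and property~(V) (forcing the chain's hyperplanes to lie on a single side of a minimal geodesic $g$ between $\mathcal{Y}$ and $\mathcal{Z}$), together with the fact that $g$ can be chosen to spell $w_0$ and hence to cross a hyperplane of every vertex-type of $\Gamma$, a combed disk-diagram argument of the kind developed in Section~\ref{section_disk_diagrams} will match each vertex of $\Gamma$ to some hyperplane $\mathcal{H}_j^i$, showing that every vertex of $\Gamma$ appears in the support of some $\sigma_i$. Consequently $\mathcal{C}$ has support $V(\Gamma)$, so $\Gamma$ is CFS, contradicting the hypothesis. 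Part~\ref{div_thm_cubic} of Theorem~\ref{div_bounds_theorem} then gives $\mathrm{Div}(W_{\Gamma}) \succeq r^3$.

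The main obstacle is the last step of the contradiction: showing that the support of the chain $\{\sigma_i\}$ exhausts $V(\Gamma)$. The $\Gamma$-complete word ensures that every vertex of $\Gamma$ has a hyperplane crossed by the minimal geodesic $g$, but it is not immediate that every such vertex appears as a type in the chain. Concretely, one builds a combed disk diagram supported by the hyperplane--path sequence $\{\mathcal{Y}, g, \mathcal{Z}\}$ together with the chain hyperplanes, and uses the one-sidedness of condition~(V) together with condition~(II) on intersections with $\mathcal{H}_1^{i-1}$ to match each hyperplane type crossed by $g$ with some $\mathcal{H}_j^i$ and hence with some square $\sigma_i$. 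Making this matching both well-defined and surjective onto $V(\Gamma)$ is the core technical step.
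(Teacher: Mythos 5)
Your overall strategy is the same as the paper's: take the hyperplanes $\mathcal{Y},\mathcal{Z}$ dual to the first and last letters of a $\Gamma$-complete word $w_0$, observe the Davis complex has $2$-alternating geodesics, show that a symbolic $2$-chain connection between $\mathcal{Y}$ and $\mathcal{Z}$ would force $\Gamma$ to be CFS, and then invoke part~\ref{div_thm_cubic} of Theorem~\ref{div_bounds_theorem}. The alternating-geodesics check and the identification of the chain's type-pairs as vertices of $\square(\Gamma)$ are correct. The gap is exactly where you flag it, and your proposed way to fill it is the wrong target.

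You set out to "match each vertex of $\Gamma$ to some hyperplane $\mathcal{H}_j^i$" and conclude "every vertex of $\Gamma$ appears in the support of some $\sigma_i$." That is not what needs to be proved, and it is not generally true: a generator $s$ of $\Gamma$ need not appear as the type of any hyperplane in the chain. What CFS actually requires is that some connected component of $\square(\Gamma)$ has support all of $V(\Gamma)$; you are free to use induced squares \emph{not} coming from the chain, as long as they are joined to the chain's squares in $\square(\Gamma)$. The argument that works is a separation argument, not a matching argument: because $\mathcal{Y}$ and $\mathcal{Z}$ are the outermost hyperplanes dual to $w_0$, every hyperplane dual to an interior letter of $w_0$ separates $\mathcal{Y}$ from $\mathcal{Z}$, hence cannot cross either, hence must cross \emph{both} members of one of the pairs $S_i = \{\mathcal{H}_1^i, \mathcal{H}_2^i\}$ in the chain. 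This gives, for each generator $s$ appearing in $w_0$ (which is every generator of $\Gamma$), a non-adjacent pair $\{a_j, b_j\}$ in the chain's support $\Delta$ with which $s$ commutes. If $s$ also fails to commute with some $t \in \Delta$, then $\{s,t,a_j,b_j\}$ is an induced square sharing the non-edge $\{a_j,b_j\}$ with a chain square; if $s$ commutes with all of $\Delta$, pick the next letter $s_{i+1}$ of $w_0$ (which, by the $\Gamma$-complete property, does not commute with $s$) and use $\{s, s_{i+1}, a_j, b_j\}$. Either way $s$ lies in the support of the same component of $\square(\Gamma)$ as the chain squares. Your plan of building a combed disk diagram on $\{\mathcal{Y}, g, \mathcal{Z}\}$ and using condition~(V)/(II) to force a bijection between vertex-types and chain hyperplanes would not produce such a bijection and is not needed once you replace "$s$ is a type in the chain" by "$s$ is in a square adjacent to a chain square."
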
 

The proof of the theorem will be given at the end of this subsection. We state the following corollaries which immediately follow. 

\begin{corollary}
$W_{\Gamma}$ has quadratic divergence if and only if $\Gamma$ is CFS and is not a join. 
\end{corollary}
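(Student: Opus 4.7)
The plan is to derive this corollary by assembling Theorem \ref{racg_quadratic_div_theorem} together with the classification of linear divergence proved earlier in this section and a matching quadratic upper bound for CFS graphs coming from the machinery of thickness. Both implications are short provided the prior pieces are in place.

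For the forward direction I would argue by contrapositive. Assume $\Gamma$ is either a non-trivial join or is not CFS. If $\Gamma$ is a non-trivial join then, by the classification of linear divergence in the previous subsection, $Div(W_\Gamma)$ is linear, hence not quadratic. If instead $\Gamma$ is not a join and not CFS, then Theorem \ref{racg_quadratic_div_theorem} supplies a cubic lower bound on $Div(W_\Gamma)$, so again $W_\Gamma$ is not quadratic. Thus quadratic divergence forces $\Gamma$ to be CFS and not a join.

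For the backward direction, suppose $\Gamma$ is CFS and not a join. The lower bound of at least quadratic follows from the previous subsection: since $\Gamma$ is not a join, a $\Gamma$-complete word $w_0$ produces a pair of strongly separated hyperplanes in the Davis complex, and Theorem \ref{div_bounds_theorem} then yields $Div(W_\Gamma) \succeq r^2$. The matching upper bound is the place where one must leave the techniques developed in this paper and invoke thickness: the result of Behrstock--Falgas-Ravry--Hagen--Susse \cite{BFHS} shows that when $\Gamma$ is CFS and $\Gamma^c$ has no isolated vertices, $W_\Gamma$ is thick of order at most $1$, and by Behrstock--Drutu \cite{BD} any group that is thick of order $d$ has divergence bounded above by a polynomial of degree $d+1$. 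Applied with $d=1$ this yields $Div(W_\Gamma) \preceq r^2$, and combined with the lower bound we conclude quadratic divergence.

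The main obstacle is thus not internal to the corollary at all; the geometric content has already been absorbed into Theorem \ref{racg_quadratic_div_theorem}, which closes the cubic gap for non-CFS graphs, and into the thickness-based upper bound for CFS graphs coming from \cite{BFHS} and \cite{BD}. Once those inputs are in hand, the corollary reduces to a routine matching of upper and lower bounds in each of the two cases, with no further disk diagram or hyperplane separation arguments required.
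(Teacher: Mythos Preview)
Your proposal is correct and matches the paper's proof: the paper handles the backward direction by a single citation to \cite{BFHS} (which packages both the thickness upper bound and the non-join lower bound you spelled out), and the forward direction by invoking Theorem \ref{racg_quadratic_div_theorem} together with the linear classification for joins, exactly as you do. Your version simply unpacks these citations a bit more explicitly.
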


\begin{proof}
If $\Gamma$ is CFS and is not a join, it follows from \cite{BFHS} that it has quadratic divergence. The other direction follows from Theorem \ref{racg_quadratic_div_theorem}. 
\end{proof}

\begin{corollary} \label{thick_order_3_cor}
If $W_\Gamma$ is strongly thick of order 2, then $W_{\Gamma}$ has cubic divergence. 
\end{corollary}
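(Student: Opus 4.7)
The plan is to sandwich $Div(W_\Gamma)$ between matching cubic upper and lower bounds. For the upper bound I invoke the Behrstock--Drutu theorem \cite{BD}: any metric space strongly thick of order $d$ has divergence bounded above by a polynomial of degree $d+1$; applied with $d=2$, this immediately gives $Div(W_\Gamma) \preceq r^3$.

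For the matching lower bound, I appeal to Theorem \ref{racg_quadratic_div_theorem}. Being strongly thick of order exactly $2$ means $W_\Gamma$ is not strongly thick of any smaller order. In particular, $\Gamma$ cannot be a non-trivial join, since then (under our running assumption that $\Gamma^c$ has no isolated vertices) $W_\Gamma$ would split as a direct product of two infinite factors and exhibit linear divergence, making it strongly thick of order $0$. Likewise, $\Gamma$ cannot be CFS: by \cite{BFHS}, a non-join CFS graph yields quadratic divergence, which would make $W_\Gamma$ strongly thick of order $1$, again contradicting that the order is exactly $2$. Therefore $\Gamma$ is neither a non-trivial join nor CFS, and Theorem \ref{racg_quadratic_div_theorem} produces $Div(W_\Gamma) \succeq r^3$. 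Combining the two bounds yields $Div(W_\Gamma) \asymp r^3$.

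The only delicate point, rather than a real obstacle, is ensuring the phrase \emph{strongly thick of order $2$} is interpreted as \emph{exactly $2$} (not merely \emph{at most $2$}); this is the standard convention in \cite{BD}, under which the argument is essentially a bookkeeping combination of Theorem \ref{racg_quadratic_div_theorem}, the thickness upper bound, and the \cite{BFHS} classification of quadratic divergence, with no further disk-diagram work required.
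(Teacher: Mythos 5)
Your argument is essentially the same as the paper's, just spelled out more fully: the paper simply states that \cite{BFHS} gives the cubic upper bound (via thickness) and that Theorem \ref{racg_quadratic_div_theorem} then gives the matching lower bound, leaving the reader to supply the (correct) observation that strong thickness of order exactly $2$ rules out $\Gamma$ being a join or CFS.

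One step in your write-up is phrased imprecisely, though, and is worth fixing. You write that a non-join CFS graph yields quadratic divergence, ``which would make $W_\Gamma$ strongly thick of order $1$.'' That inference runs the wrong direction: strong thickness of order $1$ implies divergence at most quadratic, but quadratic divergence does not by itself imply thickness of any particular order (thickness is a structural gluing property, divergence a coarse invariant, and no converse is available in general). What you actually want, and what \cite{BFHS} really establishes, is the direct implication that CFS (non-join) graphs give RACGs that are strongly thick of order at most $1$; the quadratic divergence bound in \cite{BFHS} is a \emph{consequence} of that thickness statement, not the source of it. With that implication quoted directly, ``not thick of order $\le 1$ hence not CFS and not a join'' follows cleanly, and the rest of your sandwich argument goes through exactly as you intend.
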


\begin{proof}
By \cite{BFHS}, $W_{\Gamma}$ has at most cubic divergence. Hence, by Theorem \ref{racg_quadratic_div_theorem}, $W_{\Gamma}$ has exactly cubic divergence. 
\end{proof}

The following lemma guarantees the existence of symbolically $k$--chain separated hyperplanes when $\Gamma$ is not CFS. 

\begin{lemma} \label{cfs_condition_lemma}
Let $M$ be the maximal clique size in $\Gamma$. Let $w_0 = s_1s_2...s_k$ be a $\Gamma$-complete word and consider its image in the Davis complex $X$. Let $\mathcal{Y}$ be the hyperplane dual to $w_0$ which intersects $s_1$ and $\mathcal{Z}$ the hyperplane dual to $w_0$ intersecting $s_k$. If $\mathcal{Y}$ and $\mathcal{Z}$ are symbolically $2$-chain connected then $\Gamma$ is CFS. 
\end{lemma}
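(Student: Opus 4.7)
My plan is to convert the symbolic $2$-chain data directly into a connected subgraph of $\square(\Gamma)$ whose support exhausts $V(\Gamma)=\{s_1,\ldots,s_k\}$. For each $i$, let $a_i,b_i\in V(\Gamma)$ denote the types of the hyperplanes $H_1^i,H_2^i\in S_i$. Property I gives $a_i\not\sim b_i$ in $\Gamma$; property IV gives $s_1\sim a_1,b_1$ and $s_k\sim a_m,b_m$; and $\Gamma$-completeness of $w_0$ forces $s_1\not\sim s_k$.

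First, I would apply property III iteratively, starting from $i_0=1$, to extract a subsequence $1=i_0<i_1<\cdots<i_\ell=m$ such that for each $j$, both $a_{i_j}$ and $b_{i_j}$ are adjacent in $\Gamma$ to both $a_{i_{j+1}}$ and $b_{i_{j+1}}$. Here the symbolic equality $F(H_1^i)=F(H_2^i)$ is essential, since what matters is the common type tuple rather than the literal hyperplane identity. Combined with property I at levels $i_j$ and $i_{j+1}$, the set $Q_j:=\{a_{i_j},a_{i_{j+1}},b_{i_j},b_{i_{j+1}}\}$ is an induced $4$-cycle of $\Gamma$. Consecutive squares $Q_j$ and $Q_{j+1}$ share the non-adjacent pair $\{a_{i_{j+1}},b_{i_{j+1}}\}$, so they are adjacent in $\square(\Gamma)$, and all $Q_j$ sit in one connected component $\mathcal{C}$.

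Second, I would adjoin squares containing $s_1$ and $s_k$ to $\mathcal{C}$. For $s_1$, I seek $c\in V(\Gamma)$ with $c\sim a_1,b_1$ and $c\not\sim s_1$, making $\{s_1,a_1,c,b_1\}$ an induced $4$-cycle that shares $\{a_1,b_1\}$ with $Q_0$. The natural candidate is $c=a_{i_1}$ or $b_{i_1}$; the non-adjacency $s_1\not\sim c$ should follow from the strong separation of $\mathcal{Y}$ and $\mathcal{Z}$ established in the proof of the linear-divergence classification, since a common neighbor of $s_1$ and a type on both sides of the chain would create a hyperplane intersecting both $\mathcal{Y}$ and $\mathcal{Z}$. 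The case $s_k$ is symmetric.

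The main obstacle is showing each remaining $s_j$ with $2\le j\le k-1$ lies in $\mathrm{supp}(\mathcal{C})$. For this I would invoke property V with $g$ chosen as the sub-word $s_2s_3\cdots s_{k-1}$ of $w_0$, which is a minimal geodesic from $\mathcal{Y}$ to $\mathcal{Z}$ crossing exactly the hyperplanes of types $s_2,\ldots,s_{k-1}$. Property V confines $g$ to the half-space of each $H_2^i$ opposite from $H_1^i$, a strong geometric constraint. Analyzing combed disk diagrams supported by hyperplane-path sequences of the form $\{\mathcal{Y},g,\mathcal{Z},H_1^i,H_2^i\}$ via the machinery of Section \ref{section_disk_diagrams}, each hyperplane type $s_j$ crossed by $g$ should be forced to coincide with some $a_i$ or $b_i$ along the chain, since any type of a hyperplane wedged between $H_1^i$ and $H_2^i$ on $g$'s side must appear in the intersection pattern prescribed by III. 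Making this step precise—translating the half-space condition V into the inclusion $\{s_2,\ldots,s_{k-1}\}\subseteq\bigcup_i\{a_i,b_i\}$—is the technical core of the proof and the place where the full disk-diagram toolkit is needed.
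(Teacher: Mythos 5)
Your step 1 reproduces the paper's construction of the CFS subgraph $\Delta$ built from the types $\{a_i,b_i\}$ of the chain, and this part is fine (the paper states it tersely as ``$a\cup b$ forms a CFS subgraph $\Delta$''). The problem is step 3. You aim to prove that every middle letter $s_j$ of $w_0$ actually lies in $\bigcup_i\{a_i,b_i\}$, i.e., that its type already appears in the chain, and you expect this to follow from property V via disk-diagram analysis. That inclusion is neither true in general nor what is needed: the chain $S_1,\ldots,S_m$ is just one specific sequence of hyperplanes connecting $\mathcal{Y}$ to $\mathcal{Z}$, and there is no reason for its types to exhaust $V(\Gamma)$. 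The CFS condition only asks that every vertex be in the \emph{support} of one connected component of $\square(\Gamma)$, so for $s\notin a\cup b$ it suffices to exhibit a new induced square containing $s$ that shares a non-adjacent pair with a square of $\Delta$.

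The paper's argument replaces your step 3 by a much more elementary observation that requires no disk diagrams and does not use property V: every hyperplane dual to a letter $s_j$ of $w_0$ with $1<j<k$ separates $\mathcal{Y}$ from $\mathcal{Z}$ (because $w_0$ is $\Gamma$-complete, so it intersects neither), and since $\mathcal{H}^1$ crosses $\mathcal{Y}$, $\mathcal{H}^m$ crosses $\mathcal{Z}$, and consecutive $\mathcal{H}^i$, $\mathcal{H}^{i+1}$ intersect, this separating hyperplane must intersect some pair $\mathcal{H}^i,\mathcal{K}^i$. Thus every $s\in L(w_0)$ commutes with $a_i,b_i$ for some $i$. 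The paper then runs a short case analysis: if $s\in a\cup b$, done; if $s$ fails to commute with some $t\in a\cup b$, you can choose $t\in\{a_r,b_r\}$ with $|r-j|=1$ and $m(s,a_j)=m(s,b_j)=2$, so $\{s,t,a_j,b_j\}$ is an induced square sharing $\{a_j,b_j\}$ with $\Delta$; if $s$ commutes with \emph{everything} in $a\cup b$, use the next letter $s_{i+1}$ (which does not commute with $s$ since $w_0$ is $\Gamma$-complete) to form the square $\{s_i,s_{i+1},a_j,b_j\}$. You are missing this third case entirely, and you are trying to prove something stronger than the second case in a way that would not go through. Roughly half of your proof (the $\Delta$ construction) matches the paper; the other half needs to be replaced by this separating-hyperplane argument and case split rather than a disk-diagram computation.
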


\begin{proof}
Assume $\mathcal{Y}$ and $\mathcal{Z}$ are symbolically $2$-chain connected by sequences: 

\[ S_1 = \{ \mathcal{H}^{1}, \mathcal{K}^{1} \} \]
\[ S_2 = \{ \mathcal{H}^{2}, \mathcal{K}^{2} \} \] 
\[ ... \]
\[ S_m = \{ \mathcal{H}^{m}, \mathcal{K}^{m} \} \]

Let $a = \{ a_{1}, a_{2}, ..., a_{m} \}$ and $b = \{ b_{1}, b_{2}, ..., b_{m} \}$ be the letters in $\Gamma$ corresponding respectively to the hyperplanes $\{ \mathcal{H}^1, ..., \mathcal{H}^m \}$ and $\{ \mathcal{K}_1, ..., \mathcal{K}^m\}$. It follows $a \cup b$ forms a CFS subgraph, $\Delta$, of $\Gamma$. 

Any hyperplane intersecting $w_0$ cannot intersect $\mathcal{Y}$ or $\mathcal{Z}$. Thus any such hyperplane separates $\mathcal{Y}$ from $\mathcal{Z}$. Consequently each hyperplane intersecting $w_0$ must intersect $\mathcal{H}_i$ and $\mathcal{K}_i$ for some $i$. Let $L(w_0)$ be the set of generators in the word $w_0$, namely $L(w_0) =  \{s_1, s_2, ..., s_k\}$. It follows that for each $s \in L(w_0)$, there exists a $j$ such that $s$ commutes with both $h_j, k_j \in \Delta$.

Given $s \in L(w_0)$, assume $s \notin a \cup b$ as a vertex of $\Gamma$, and assume $s$ does not commute with every generator in $a \cup b$. Let $t \in a \cup b$ be such that $m(s, t) = \infty$, $t \in \{a_r, b_r\}$ for some $r$, and $m(s, a_j) = m(s, b_{j}) = 2$ for some $j$ with $|r-j| =1$. This is possible by the above paragraph. It follows $\{s, t, a_j, b_j \}$ forms an induced square which shares two non-adjacent vertices with a square in $\Delta$. 

On the other hand, suppose $s_i \in L(w_0)$ commutes with every generator in $A \cup B$. $s_{i+1}$ (if $i = k$ set $s_{i+1} = s_1$) commutes with $a_j$ and $b_j$ for some $j$. We then have that $\{s_i, s_{i+1}, a_j, b_j \}$ forms an induced square which which shares two non-adjacent vertices with a square in $\Delta$. 

We have thus shown every generator in $L(w_0)$ is either contained in $\Delta$ or contained in an induced square C which shares two non-adjacent vertices with a square in $\Delta$. Since $L(w_0)$ contains every generator in $\Gamma$, we have shown that $\Gamma$ is CFS.
\end{proof}

\begin{lemma} \label{racg_alternating_lemma}
The Davis complex for $W_{\Gamma}$ has $2$-alternating geodesics. 
\end{lemma}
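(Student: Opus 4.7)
The plan is to show that once a combinatorial geodesic is longer than the maximum clique size of $\Gamma$, it must cross two hyperplanes whose associated generators do not commute, and such a pair is automatically of non-intersecting type in the Davis complex.

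Let $M$ be the size of a maximum clique in $\Gamma$ plus one; I take this as the constant in Definition \ref{k_alternating_geodesics}. Any combinatorial geodesic of length $M$ in the Davis complex of $W_\Gamma$ corresponds to a reduced expression $t_1 t_2 \cdots t_M$ in the generators, and the hyperplanes it crosses (in order) are precisely the hyperplanes of type $t_1, \ldots, t_M$ dual to these letters. The first step is to observe that not all of the $t_i$ can pairwise commute: if they did, the set of distinct generators appearing in the word would form a clique in $\Gamma$, and since commuting generators can be freely rearranged, any repetition would yield a cancellation $ss = 1$, forcing the reduced word to have length at most the maximum clique size, contradicting $M$.

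Hence the reduced word contains two distinct generators $s$ and $t$ with $m(s,t) = \infty$. Let $\mathcal{H}_1$ be the hyperplane crossed at the first occurrence of $s$ and $\mathcal{H}_2$ the hyperplane crossed at the first occurrence of $t$ (ordered so that $\mathcal{H}_1$ precedes $\mathcal{H}_2$ along the geodesic). I then verify that $\mathcal{H}_1$ and $\mathcal{H}_2$ are of non-intersecting type: in the Davis complex of a RACG, two hyperplanes of types $s'$ and $t'$ intersect if and only if $s' = t'$ (in which case some translate of one equals the other) or $m(s', t') = 2$; since $s \neq t$ and $m(s,t) = \infty$, no translate of $\mathcal{H}_1$ meets $\mathcal{H}_2$.

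There is no real obstacle beyond cleanly stating this clique-size argument; the only subtlety worth care is justifying that in a reduced word whose letters all pairwise commute, each generator appears at most once, which follows from the fact that the subgroup generated by a clique is a direct product of copies of $\mathbb{Z}_2$.
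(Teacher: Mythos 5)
Your proof is correct and takes essentially the same route as the paper: both set $M$ to be one more than the maximum clique size, both conclude that a reduced expression of that length must contain two non-commuting generators, and both observe that the corresponding hyperplanes are of non-intersecting type. The paper invokes Tits' solution to the word problem to justify the middle step, whereas you give a self-contained argument (pairwise-commuting letters would form a clique, so the subgroup they generate is a finite direct product of $\mathbb{Z}_2$'s and no reduced word in it can exceed the clique size); that substitution is sound and arguably more transparent, but it does not change the structure or scope of the argument.
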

\begin{proof}
Choose $M$ to be one larger than the maximal clique size in $\Gamma$. Let $g = s_1s_2...s_M$ be a geodesic of length $M$ with $s_i \in \Gamma$. By Tits' solution to the word problem (see \cite{Dav}), it follows that $m(s_i, s_j) = \infty$ for some $1 \le i < j \le k$. It follows the hyperplane intersecting $s_i$ and the hyperplane intersecting $s_j$ are of non-intersecting type. 
\end{proof}

\begin{proof} [Proof of Theorem \ref{racg_quadratic_div_theorem}]
Theorem \ref{racg_quadratic_div_theorem} now follows from the above two lemmas and Theorem \ref{div_bounds_theorem}. 
\end{proof}

\subsection{Higher Degree Polynomial Divergence in RACGs}

In this section, we apply results from Section \ref{section_higher_bounds} to give graph-theoretic criteria which imply lower bounds on the divergence of a RACG. Together with the machinery of thickness (see \cite{BHSC} and \cite[Corollary 6.3.1]{Lev2}) which provides upper bounds on divergence, these results allow one to compute the exact divergence of many RACGs. 

\begin{definition}
Given distinct vertices $s, t \in \Gamma$, $(s, t)$ is a \textit{non-commuting pair} if $s$ is not adjacent to $t$ in $\Gamma$. 
\end{definition}

\begin{definition} \label{rank_n_pair}
A non-commuting pair $(s, t)$ is rank 1 if $s, t$ are not contained in some induced square of $\Gamma$. Additionally $(s, t)$ are rank $n$ if either every non-commuting pair $(s_1, s_2)$, with $s_1, s_2 \in Link(s)$, is rank $n-1$ or every non-commuting pair $(t_1, t_2)$, with $t_1, t_2 \in Link(t)$, is rank $n-1$. 
\end{definition}

\begin{figure}[h]\label{fig_rank_n_example}
\centering
\begin{overpic}[scale=.3]{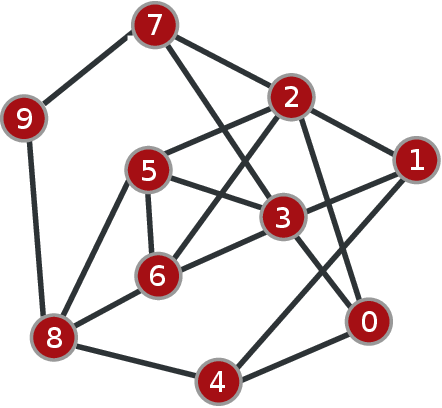}

\end{overpic}
\caption{The non-commuting pairs (4,6), (4,5), (4,9), (5,9) and (6,9) for the example graph above are rank 1. It then follows that the non-commuting pair (7,8) is rank 2. Taking this one step further, we see that the non-commuting pair (9,0) is rank 3. By Theorem \ref{higher_degree_div_racg}, the RACG associated to the above graph has divergence bound below by a polynomial of degree 4. Furthermore,  it can easily be checked using techniques from \cite{BHSC} that this RACG is thick of order 3 and so the divergence of this group is exactly a quartic polynomial.}
\end{figure}

\begin{theorem} \label{higher_degree_div_racg}
Suppose $\Gamma$ contains a rank $n$ pair $(s, t)$, then $Div(W_{\Gamma})$ is bounded below by a polynomial of degree $n+1$. 
\end{theorem}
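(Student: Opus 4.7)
The plan is to reduce to Theorem \ref{higher_degree_theorem}, which converts a pair of degree $d$ $k$--separated hyperplanes into a polynomial lower bound of degree $d+1$ on divergence. Accordingly, given a rank $n$ pair $(s,t)$ in $\Gamma$, I will construct hyperplanes $\mathcal{Y}$ of type $s$ and $\mathcal{Z}$ of type $t$ in the Davis complex $\Sigma_\Gamma$ and argue by induction on $n$ that they are degree $n$ $k$--separated for a single constant $k = k(\Gamma)$, chosen at the outset to bound the maximal clique of $\Gamma$, the number of hyperplane orbits, and the Ramsey constant $N(k)$ of Lemma \ref{ramsey_lemma}.

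The construction is via a $\Gamma$--complete word $w_0 = s \, s_2 \cdots s_{m-1} \, t$ whose first letter is $s$ and whose last letter is $t$, which is permissible under Definition \ref{complete_word_def} because $m(s,t) = \infty$. Let $\mathcal{Y}$ and $\mathcal{Z}$ be the hyperplanes dual to the first and last letters of $w_0$ read as a geodesic in $\Sigma_\Gamma$. The same argument used in the classification of linear divergence shows $\mathcal{Y}$ and $\mathcal{Z}$ are strongly separated, i.e.\ $0$--separated: a common transversal hyperplane would force some generator of $\Gamma$ to commute with every other generator, contradicting the standing assumption that $\Gamma^c$ has no isolated vertex. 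This settles the base case $n=1$, since being degree $1$ $k$--separated is, by definition, just being $k$--separated.

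For the inductive step, suppose the proposition holds at rank $n-1$ with constant $k$, and assume without loss of generality that every non-commuting pair $s_1, s_2 \in Link(s)$ is rank $n-1$. I need to verify the recursive clause of Definition \ref{degree_d_separated_def}: any combinatorial geodesic of length $k$ contained in $N(\mathcal{Y})$ crosses a pair of degree $n-1$ $k$--separated hyperplanes. The carrier $N(\mathcal{Y})$ is a product thickening of $\mathcal{Y}$ by an $s$--edge, and $\mathcal{Y}$ itself is canonically identified with the Davis complex of the sub--RACG $W_{Link(s)}$. A combinatorial geodesic in $N(\mathcal{Y})$ projects to a geodesic of essentially the same length in $\mathcal{Y}$; by Tits' solution to the word problem, used as in Lemma \ref{racg_alternating_lemma}, once $k$ exceeds the maximal clique size of $\Gamma$ any such geodesic must cross hyperplanes of two types $s_1, s_2 \in Link(s)$ with $m(s_1, s_2) = \infty$. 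The rank $n$ hypothesis ensures $(s_1, s_2)$ is rank $n-1$, so the inductive hypothesis applied inside $W_{Link(s)}$ furnishes a pair of degree $n-1$ $k$--separated hyperplanes of types $s_1, s_2$.

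The main obstacle is closing the gap between this ``abstract'' inductive pair and the specific pair of hyperplanes of types $s_1, s_2$ actually crossed by our fixed geodesic in $N(\mathcal{Y})$. I would handle this by observing that the inductive construction itself produces its pair from a $Link(s)$--complete word, so any two pairs of the same type data built this way differ by an element of $W_{Link(s)} \leq W_\Gamma$, and degree $n-1$ $k$--separation is invariant under the isometry action of $W_\Gamma$ on $\Sigma_\Gamma$. A secondary technicality is that $k$--separation must be verified in the ambient complex rather than just within $\mathcal{Y}$: any extra ambient hyperplane meeting both inductively produced hyperplanes must have a type commuting with both $s_1$ and $s_2$, and the number of such types is bounded by the clique size of $\Gamma$, so a uniform $k$ chosen once and for all absorbs these contributions and the induction closes without $k$ growing with $n$.
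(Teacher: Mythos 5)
The gap you anticipate is real and your proposed fix does not close it. You formulate the inductive claim for the specific hyperplanes produced from a $\Gamma$-complete (or $Link(s)$-complete) word, so when the induction returns a pair of degree $(n-1)$ $k$-separated hyperplanes of types $s_1, s_2$, it is a particular pair, not the pair actually crossed by the geodesic $g \subset N(\mathcal{Y})$. You try to bridge this with $W_\Gamma$-equivariance, claiming that ``any two pairs of the same type data built this way differ by an element of $W_{Link(s)}$,'' but the pair crossed by $g$ is not built that way at all: $g$ has length $k$, and the two hyperplanes of non-commuting types $s_1, s_2$ that it crosses may be far apart along $g$ with arbitrary intervening structure, so the pair need not lie in the $W_\Gamma$-orbit of a pair arising from consecutive letters in a $Link(s)$-complete word. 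Each hyperplane of a fixed type lies in a single orbit, but a \emph{pair} of hyperplanes of fixed types does not, and degree $d$ $k$-separation is a property of the pair. The equivariance argument therefore does not propagate the inductive hypothesis to the configurations the recursive clause of Definition~\ref{degree_d_separated_def} actually forces you to consider.

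The paper's proof sidesteps this by strengthening the inductive statement: it shows that for a rank $n$ pair $(s,t)$, \emph{every} pair of hyperplanes of types $s$ and $t$ is degree $n$ $M$-separated, where $M$ is the maximal clique size of $\Gamma$. This universal form is exactly what the recursive clause of Definition~\ref{degree_d_separated_def} requires, since it quantifies over all geodesics in the carrier; and it is what makes the induction close, because when a geodesic in $N(\mathcal{Y})$ crosses hyperplanes of types $s_1, s_2$ with $m(s_1, s_2) = \infty$, the induction hypothesis applies to those very hyperplanes whatever their positions. The $\Gamma$-complete-word construction is then unnecessary scaffolding: the base case only needs $M$-separation of arbitrary hyperplanes of types $s,t$, which follows from Lemma~\ref{racg_alternating_lemma} together with the absence of an induced square containing $s$, $t$, and two non-commuting intermediate generators. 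Replacing your specific-hyperplane induction with this universal claim is the missing idea.
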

\begin{proof}
Let $M$ be the maximal clique size in $\Gamma$. We claim that hyperplanes of type $s$ and $t$ must be degree $n$ $M$-separated, in the sense of Definition \ref{degree_d_separated_def}. If this claim is shown, the theorem follows from Theorem \ref{higher_degree_theorem}. 

We first prove the base case when $n = 1$. Suppose, for a contradiction, $\mathcal{Y}$ and $\mathcal{Z}$ are of type $s$ and $t$ respectively and that $M+1$ hyperplanes intersect both $\mathcal{Y}$ and $\mathcal{Z}$. It follows from Lemma \ref{racg_alternating_lemma} that two such hyperplanes, $\mathcal{H}$ and $\mathcal{H}'$ are respectively of type $a, b \in \Gamma$ where $(a,b)$ is a non-commuting pair. However, it then follows $\{s, a, b, t \}$ is an induced square in $\Gamma$, contradicting $(s,t)$ being rank 1. 

For the general case, suppose $(s, t)$ are rank $n$ and $\mathcal{Y}$ and $\mathcal{Z}$ are hyperplanes of type $s$ and $t$ respectively. Without loss of generality, assume every non-commuting pair $(s_1, s_2)$, with $s_1, s_2 \in Link(s)$,  are rank $n-1$. By the induction hypothesis, hyperplanes of type $s_1$ and type $s_2$ are degree $n-1$ $M$-separated. 

Consider any geodesic, $g \subset N(\mathcal{Y})$, of length $M+1$. By Lemma \ref{racg_alternating_lemma}, $g$ crosses two hyperplanes of non-commuting type, say of type $s_1$ and type $s_2$. By the above paragraph, $(s_1, s_2)$ must be degree $n-1$ $M-1$-separated. The claim then follows.
\end{proof}

\begin{remark}\label{cubic_counterexample_rmk} It is not true that the largest rank of a pair of vertices of a graph determines the corresponding RACG's divergence. The graph, $\Gamma$, in Figure \ref{cubic_counterexample_fig} is not a join and is not CFS. Therefore, the divergence of $W_{\Gamma}$ is at least cubic by Theorem \ref{racg_quadratic_div_theorem}. In fact, by applying \cite[Corollary 6.3.1]{Lev2} to obtain an upper bound, the divergence is determined to be exactly cubic. Furthermore, every non-adjacent pair of vertices in $\Gamma$ is either rank $0$ or rank $1$. It follows we can only obtain a quadratic lower bound on the divergence of $W_{\Gamma}$ through Theorem \ref{higher_degree_div_racg}.
\end{remark}

\begin{figure}[h]
	\centering
	\begin{overpic}[scale=.3]{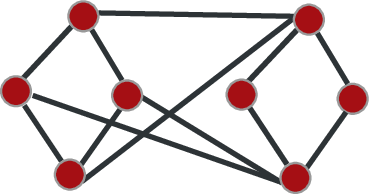}

	\end{overpic}
	\caption{A graph that is not a join, is not CFS and only contains rank $0$ and rank $1$ pairs of vertices.} \label{cubic_counterexample_fig}
\end{figure}

\section{Coxeter Groups} \label{section_coxeter_groups}

This section explores lower bounds for divergence in Coxeter groups (not necessarily right-angled). We use similar arguments to those used in Section \ref{section_higher_bounds}. We do not make use of a cube complex in this section. Instead, we use the construction of bands in Van-Kampen diagrams which behave similarly to dual curves in CAT(0) cube complex disk diagrams. We refer the reader to \cite[Chapter 4]{Olshanskii} for a background on Van-Kampen diagrams and to \cite{Bahls} for their application to Coxeter groups.

A characterization of thick Coxeter groups is given in \cite[Proposition A.2]{BHSC} by a class of edge-labelled graphs that can be constructed by an inductive procedure. Furthermore, the authors' proof of this proposition provides an upper bound on thickness, and hence divergence, at each step of the inductive construction. One can then carefully apply the results in this section, together with the work in \cite{BHSC}, and obtain the exact divergence for a large class of Coxeter groups.

In this section, we assume all Coxeter diagrams have at least one edge. Otherwise, $W_{\Gamma}$ is virtually trivial or virtually free and exhibits either trivial or infinite divergence. 

\subsection{Higher Degree Polynomial Divergence}

\begin{definition} \label{locally_even}
Let $\Gamma$ be a labeled graph. The vertex $v \in V(\Gamma)$ is \textit{$r$-locally triangle-free} if for all $u \in V(\Gamma)$, such that $d_{\Gamma}(u, v) < r$, $u$ is not in a triangle. We say $v$ is \textit{$r$-locally even} if for all $u \in V(\Gamma)$, such that $d_{\Gamma}(u, v) < r$, each edge adjacent to $u$ is even labeled or not labeled. 
\end{definition}

\begin{definition}
For $\Gamma$ a Coxeter diagram, let $L_{\Gamma}$ be the largest integer edge label in $\Gamma$. If $\Gamma$ contains no labeled edges, set $L_{\Gamma} = 2$. 
\end{definition}

The following lemma allows us to choose boundedly spaced generators in a minimal expression for a word $w \in W_{Star(v)}$ such that these generators are not $v$ and do not sequentially coincide. 

\begin{lemma} \label{general_ordering_lemma}
Let $\Gamma$ be a Coxeter diagram. For any $v \in \Gamma$ and any minimal expression, $w = s_1s_2...s_n$, $s_i \in Star(v)$, for a word $w \in W_{Star(v)}$, there exists a subsequence $\{ s_{i_1}, s_{i_2}, ...,  s_{i_m} \} $ such that 

\begin{enumerate}

\item $s_{i_j} \neq v$ for all $j$. 
\item $i_1 \le 2$
\item $i_{j+1} - i_j \le L_\Gamma$ 
\item $s_{i_{j+1}} \neq s_{i_j}$ as vertices of $\Gamma$ 
\item $m \ge \frac{n}{L_\Gamma + 1}$. 

\end{enumerate}
\end{lemma}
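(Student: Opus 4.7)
The plan is to construct the desired subsequence greedily. I take $i_1$ to be the smallest index with $s_{i_1} \neq v$, and having chosen $i_j$, I let $i_{j+1}$ be the smallest index $i > i_j$ with both $s_i \neq v$ and $s_i \neq s_{i_j}$, terminating when no such index exists. Properties (1) and (4) will hold by construction; property (2) follows from the observation that $s_1 = v$ forces $s_2 \neq v$, since otherwise the subword $s_1 s_2 = vv$ would contradict minimality of the expression.

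The heart of the proof is property (3). Write $a = s_{i_j}$ and suppose every position $i_j + 1, \ldots, i_{j+1} - 1$ is \emph{forbidden}, i.e., each such $s_i \in \{v, a\}$. I claim the resulting forbidden run together with $s_{i_j}$ forms the alternating subword $a, v, a, v, \ldots$: reducedness rules out two consecutive equal letters, which combined with $s_{i_j} = a$ forces $s_{i_j+1} = v$, then $s_{i_j+2} = a$, and so on. Since contiguous subwords of a reduced expression are reduced, I invoke Tits' description of reduced words in the dihedral group $\langle v, a\rangle$ of order $2m(v,a)$ (see e.g.\ \cite{Dav}): an alternating word in this subgroup is reduced exactly when its length is at most $m(v, a)$. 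Consequently the alternating subword in question has length at most $m(v,a) \le L_\Gamma$, giving $i_{j+1} - i_j \le L_\Gamma$.

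For property (5), the greedy procedure terminates either when $i_m = n$ or when the tail $s_{i_m}, s_{i_m+1}, \ldots, s_n$ is itself a forbidden run. In the latter case the same alternating analysis applied to the tail yields $n - i_m + 1 \le m(v, s_{i_m}) \le L_\Gamma$, so in either case $i_m \ge n - L_\Gamma + 1$. Combining $i_1 \le 2$ with the gap bound $i_{j+1} - i_j \le L_\Gamma$ from property (3) gives $n - L_\Gamma + 1 \le i_m \le 2 + (m-1)L_\Gamma$, hence $m \ge (n-1)/L_\Gamma$, which is at least $n/(L_\Gamma + 1)$ whenever $n \ge L_\Gamma + 1$ (and holds trivially for smaller $n$, where the desired bound only requires $m \ge 1$). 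The main obstacle I anticipate is conceptual rather than computational: recognizing that the local structure of any forbidden run is forced to be an alternating $\{v, a\}$-word, so that the problem reduces to the dihedral subgroup $\langle v, a\rangle$. Once that reduction is in place, the bound on the run's length and the final counting are straightforward.
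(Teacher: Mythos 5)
Your proof is correct and follows essentially the same route as the paper: a greedy scan that picks each next index as soon as a new non-$v$ letter appears, with the gap bound coming from the observation that a run of letters in $\{v, a\}$ inside a reduced word must alternate and hence, by Tits' description of reduced words in the dihedral subgroup $\langle v, a \rangle$, has length at most $m(v,a) \le L_\Gamma$. The paper states the same argument more tersely (forbidding $svsv\ldots$ runs longer than $L_\Gamma$ and iterating); your write-up simply makes the greedy construction and the final index arithmetic explicit.
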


\begin{proof}
Since $w$ is minimal length, there cannot be two $v$ letters appearing consecutively. Hence either the first or second letter is not $v$. Set $t_{i_1}$ to be this letter. Now note that for any letter $s \in Link(v)$ and $n > L_{\Gamma}$, we cannot have the expression $svsv...sv$ or $svsv...svs$ of length $n$ appearing in $w$ for this would contradict $w$ being reduced. Hence, there is some letter, $t_{i_2}$ not equal to $t_{i_1}$ or $v$ with $i_2 - i_1 \le L_{\Gamma}$. We can keep repeating this process, and the lemma follows.   
\end{proof}

Let $D$ be a Van-Kampen diagram for a Coxeter group. Each 2-cell in $D$ has an even number of edges along its boundary path. For a given cell and a given edge along the cell's boundary path, there is a corresponding opposite edge. Furthermore, each edge in $D$ is contained in exactly one cell if it is a boundary edge of $D$ and in exactly two cells if it is not. Two edges $e$ and $e'$ in $D$ are \textit{opposite connected} if there is a sequence of edges $e = e_1, e_2, ..., e_n = e'$ such that $e_i$ is opposite to $e_{i+1}$ in some 2-cell of $D$. A \textit{band} associated to an edge $e$ in $D$ is the set of all edges opposite connected to $e$ and cells adjacent to these edges.  

The construction of bands is utilized in \cite[Section 1.4]{Bahls}. There it is also shown that bands do not self-intersect and cannot intersect geodesics twice. 

For $u, v \in \Gamma$, an \textit{odd path} from $u$ to $v$ is a path in $\Gamma$ which only contains edges with odd labels. Let $O_v$ consist of vertices $u \in \Gamma$ for which there is an odd path from $u$ to $v$. By definition $v \in O_v$. 

Let $e$ be an edge in $D$ labeled by some $v \in \Gamma$. It is easy to check the band corresponding to $e$ only contains edges labeled by elements in $O_v$. 

Recall that Definition \ref{rank_n_pair} of a rank $n$ pair is still valid for Coxeter groups which are not right-angled. 

\begin{theorem} \label{coxeter_higher_div_thm}
Let $\Gamma$ be a Coxeter graph. Suppose $(u, v)$ is a rank $n$ pair. Without loss of generality, we assume that for all distinct $u_1, u_2 \in Link(u)$, $(u_1, u_2)$ is a rank $n-1$ pair in $\Gamma$. Further assume that $u$ is $n$-locally triangle free and $n+1$-locally even and that $v$ is $1$-locally even. It follows that the divergence of the bi-infinite geodesic $...uvuv...$ is bounded below by a polynomial of degree $n+1$. 
\end{theorem}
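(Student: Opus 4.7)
The plan is to proceed by induction on $n$, adapting the disk-diagram strategy of Theorem \ref{higher_degree_div_theorem} (which treats CAT(0) cube complexes) to Van Kampen diagrams for general Coxeter groups. Bands will play the role of dual curves, and the local evenness hypotheses on $u$ and $v$ ensure that each band through a $u$- or $v$-edge carries only that single label, mimicking the single-type behavior of hyperplanes.

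Setup. Fix $R > 0$ and take points $x_1, x_2$ on $\gamma$ at distance $2R$ with midpoint $p$; let $\gamma'$ denote the subsegment between them. Let $\alpha$ be any combinatorial path from $x_1$ to $x_2$ avoiding $B_p(R)$, and let $D$ be a reduced Van Kampen diagram with boundary $\gamma' \alpha^{-1}$. Since $u$ is $1$-locally even (as $n+1 \ge 2$), the $\sim R$ $u$-edges on $\gamma'$ give rise to disjoint $u$-bands $B_1, \ldots, B_N$ in $D$; each must terminate on $\alpha$ because bands cannot cross a geodesic twice. Between consecutive bands $B_i$ and $B_{i+1}$, the sub-diagram $D_i$ is bounded by the single $v$-edge of $\gamma'$ between them, a subpath $\alpha_i \subset \alpha$, and the two inward-facing sides of $B_i, B_{i+1}$.

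Induction. For the base case $n=1$, the rank-$1$ hypothesis means $u$ and $v$ lie in no induced square of $\Gamma$; equivalently $Link(u) \cap Link(v)$ is a clique. A counting argument analogous to Lemma \ref{k_separated_div_lemma} shows that any band in $D_i$ which interacts with both $B_i$ and $B_{i+1}$ must have label in $Link(u) \cap Link(v)$: if $|\alpha_i|$ were sublinear in $R$, a pigeonhole would force a non-commuting pair $u_1, u_2 \in Link(u) \cap Link(v)$, giving the induced square $\{u, u_1, v, u_2\}$ and contradicting rank $1$. Summing $|\alpha_i| \succeq R$ over the $\sim R$ slabs yields $|\alpha| \succeq R^2$. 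For the inductive step, the assumption that every non-commuting pair $(u_1, u_2) \subset Link(u)$ is rank $n-1$, combined with the local triangle-freeness and evenness of $u$, guarantees each such $(u_1, u_2)$ satisfies the hypotheses of the theorem at rank $n-1$ (here $u_1$ is $(n-1)$-locally triangle-free, $n$-locally even, and $u_2$ is $1$-locally even). Inside each slab $D_i$, Lemma \ref{general_ordering_lemma} applied to the label sequence along the side of $B_i$, together with a pigeonhole over the finite set of hyperplane types, extracts a long alternating sub-path labelled $\ldots u_1 u_2 u_1 u_2 \ldots$ for some non-commuting pair $(u_1, u_2) \in Link(u)$. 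This sub-path is geodesic in $W_\Gamma$, and $\alpha_i$ acts as a ball-avoidant detour around it within $D_i$; the induction hypothesis then gives $|\alpha_i| \succeq (R - O(iL_\Gamma))^n$. Summing over the $\sim R / L_\Gamma$ slabs produces $|\alpha| \succeq R^{n+1}$, completing the induction.

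Main obstacle. The chief difficulty is extracting an alternating $\ldots u_1 u_2 u_1 u_2 \ldots$ sub-word along a band side that is both geodesic in $W_\Gamma$ and long enough for the induction hypothesis to yield a useful estimate. Lemma \ref{general_ordering_lemma} only produces distinct consecutive letters, not non-commuting ones, so it must be combined with a type-pigeonhole and the rank-$n$ hypothesis on $Link(u)$. A parallel bookkeeping of radii across the induction, ensuring the slab estimate $|\alpha_i| \succeq (R - O(iL_\Gamma))^n$ truly aggregates to a degree-$(n+1)$ polynomial (rather than degenerating to $R \cdot R^{n-1}$, as in the analogous bookkeeping of Theorem \ref{higher_degree_div_theorem}), is also delicate and requires careful choice of subsequence spacing.
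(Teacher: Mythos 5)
Your base case is essentially a contrapositive form of the paper's argument and is sound, but your inductive step has a genuine gap which you yourself flag as the "main obstacle" without resolving, and it is in fact the crux. You try to apply the theorem recursively inside each slab $D_i$, and the theorem's statement concerns a \emph{periodic} geodesic $\ldots u_1u_2u_1u_2\ldots$. To feed the induction hypothesis you would need the word along the inward side of a $u$-band $B_i$ to contain a long geodesic subword alternating between a fixed non-commuting pair $(u_1,u_2) \subset Link(u)$. There is no reason such a periodic subword exists: the band side is an arbitrary minimal word in $W_{Star(s_{i_j})}$, and a pigeonhole over label types only produces repeated labels, not a consecutive alternation that remains geodesic in $W_\Gamma$. (Your worry about Lemma~\ref{general_ordering_lemma} only producing distinct, not non-commuting, consecutive letters is actually a non-issue: $u$ being $1$-locally triangle-free forces any two distinct vertices of $Link(u)$ to be non-adjacent, so distinctness automatically gives a non-commuting pair. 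The real problem is periodicity, not non-commutation.)

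The paper sidesteps this entirely by not inducting on the theorem but on a more flexible auxiliary statement, Lemma~\ref{general_div_lemma}: for $g \in W_{Star(u)}$ and $h \in W_{Star(v)}$ joined by a short connector $p$ with $|p| \le L_\Gamma$, any ball-avoidant path from $g$ to $ph$ has length at least polynomial of degree $n$ in $r$. Because the lemma allows $g$ to be an \emph{arbitrary} reduced word in $W_{Star(u)}$ rather than a power of $u_1u_2$, the recursion closes cleanly: the word $g_i$ along a band side is exactly such an arbitrary word in $W_{Star(s_{i_j})}$ with $s_{i_j} \in Link(u)$, and $(s_{i_j}, s_{i_{j+1}})$ is rank $n-1$ by hypothesis, so the induction hypothesis applies directly with $p_i$ as the short connector. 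The Theorem is then a one-line consequence: cut the outer diagram between consecutive $U_i,V_i$-bands of $\gamma$ and apply the degree-$n$ lemma bound in each slab, summing over $\sim r$ slabs to get degree $n+1$. If you want to save your approach, replace the direct recursion on the theorem with the formulation of Lemma~\ref{general_div_lemma} as the inductive workhorse; the rest of your slab bookkeeping and hypothesis tracking then goes through as you sketched.
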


The following corollary is immediate: 

\begin{corollary}
Let $W_{\Gamma}$ be an even Coxeter group such that $\Gamma$ contains no triangles. If $(u,v)$ is a rank $n$ pair in $\Gamma$, then $Div(W_{\Gamma})$ is bounded below by a polynomial of degree $n+1$. 
\end{corollary}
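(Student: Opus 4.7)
The plan is to prove the theorem by induction on $n$, directly adapting the proof of Theorem \ref{higher_degree_div_theorem} from the CAT(0) cube complex setting by replacing dual curves in disk diagrams with bands in Van Kampen diagrams for $W_\Gamma$. Fix a basepoint $p$ on $g = \ldots uvuv\ldots$, a radius $R>0$, and let $\alpha$ be a combinatorial path joining $q_1,q_2\in g$ at distance $\approx R$ from $p$ on opposite sides of $p$, avoiding $B_p(R/c)$ for a suitable constant $c$. Let $g'\subset g$ be the subpath from $q_1$ to $q_2$. The loop $g'\cdot\alpha^{-1}$ is null-homotopic, so by the Van Kampen lemma there is a disk diagram $D$ with this boundary. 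Because $u$ is $n+1$-locally even and $v$ is $1$-locally even, $u$-bands and $v$-bands from $g'$ contain only $u$-edges and $v$-edges respectively; because $m(u,v)=\infty$, no $2$-cell contains both $u$ and $v$, so $u$-bands and $v$-bands cannot cross. A standard shortcutting argument shows no band emanating from $g'$ returns to $g'$, so every such band terminates on $\alpha$.

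In the base case $n=1$, the rank~$1$ hypothesis prevents $(u,v)$ from being contained in any induced square. The same analysis as in the base case of Theorem \ref{higher_degree_div_racg} applies: a $u$-band and a $v$-band from adjacent edges of $g'$ are ``$M$-separated'' in the sense that only boundedly many types of band cross both, since otherwise two such crossing bands would form a non-commuting pair in $Link(u)\cap Link(v)$, producing an induced square and contradicting rank~$1$. This $M$-separation combined with the linearly many $u$-bands along $g'$ produces the required quadratic lower bound $|\alpha|\succeq R^2$, mirroring the combined effect of the degree-$1$ case of Theorem \ref{higher_degree_div_theorem} and the chaining of Theorem \ref{chaining_hyperplanes_theorem} while avoiding the cube complex machinery. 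For the inductive step, fix a $u$-band $B$ from $g'$ to $\alpha$. One side of $B$'s carrier reads a word $w\in W_{Star(u)}$ of length $|B|\succeq R$. Apply Lemma \ref{general_ordering_lemma} to extract a boundedly-spaced subsequence of distinct non-$v$ letters of length $\succeq R$. The $n$-locally triangle-free and $n+1$-locally even hypotheses, combined with the theorem's WLOG hypothesis, imply each consecutive non-commuting pair $(u_1,u_2)$ from this subsequence is a rank $n-1$ pair satisfying the theorem's hypotheses. The $u_1$- and $u_2$-bands emanating from $B$ reach $\alpha$ at edges $e_1',e_2'$; the $\alpha$-subsegment between them is a detour along the bi-infinite $u_1u_2u_1u_2\ldots$ geodesic avoiding a ball of radius $\succeq R$ about an appropriate basepoint, so by induction has length $\succeq R^n$. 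Summing over the $\succeq R$ consecutive non-commuting pairs along $B$ yields $|\alpha|\succeq R\cdot R^n = R^{n+1}$.

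The main obstacle is verifying that bands in the Van Kampen diagram behave like dual curves in cube complex disk diagrams. Using $u$'s local evenness, one must argue bands preserve their label and do not pathologically self-intersect, and, using $u$'s local triangle-freeness, one must show the structure of $Link(u)$ near bands is clean enough for the induction to propagate without breaking the hypotheses for the smaller rank. A second subtlety is ensuring Lemma \ref{general_ordering_lemma} yields a positive proportion of consecutive \emph{non-commuting} pairs rather than merely distinct pairs; this requires a pigeonhole argument over the finite number of generator types appearing within distance $\lesssim n$ of $u$ in $\Gamma$. Finally, care is required in setting up the inductive hypothesis so that the $\alpha$-subsegment corresponding to a $(u_1,u_2)$ pair truly constitutes a valid divergence-avoidant detour relative to a basepoint on the smaller bi-infinite geodesic; tracking the exact distances from the band endpoints to this basepoint in the Cayley graph is technical but follows standard Van Kampen diagram accounting analogous to that in the proof of Lemma \ref{chain_separated_div_lemma}.
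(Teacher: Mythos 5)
The paper's proof of this corollary is a single observation, not a new argument: it follows immediately from Theorem~\ref{coxeter_higher_div_thm}. When $\Gamma$ is an even Coxeter graph, every vertex is $k$-locally even for all $k$, so in particular $u$ is $(n{+}1)$-locally even and $v$ is $1$-locally even. When $\Gamma$ is triangle-free, every vertex is $k$-locally triangle-free for all $k$, so $u$ is $n$-locally triangle-free; furthermore, any two distinct vertices $u_1,u_2\in Link(u)$ are automatically non-adjacent (else $\{u,u_1,u_2\}$ would be a triangle), so the ``without loss of generality'' hypothesis in Theorem~\ref{coxeter_higher_div_thm} reduces to the without-loss clause already built into Definition~\ref{rank_n_pair}. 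With all hypotheses verified, Theorem~\ref{coxeter_higher_div_thm} gives the degree $n{+}1$ lower bound on the divergence of the geodesic $\ldots uvuv\ldots$, hence on $Div(W_\Gamma)$.

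Your proposal instead re-derives the content of Theorem~\ref{coxeter_higher_div_thm} and Lemma~\ref{general_div_lemma} from scratch: it builds Van Kampen diagrams, substitutes bands for dual curves, invokes Lemma~\ref{general_ordering_lemma} to organize the bands emanating from a carrier side, and runs an induction on rank. That is precisely the development the paper has already carried out (it is essentially the body of Lemma~\ref{general_div_lemma} and the proof of Theorem~\ref{coxeter_higher_div_thm}), so your outline is in the right spirit but reproduces work rather than applying it. The point you missed is the simple specialization: global evenness and triangle-freeness subsume every local hypothesis of Theorem~\ref{coxeter_higher_div_thm}, so the corollary requires no new diagram arguments at all. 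The subtleties you flag near the end (band self-intersection, carrying hypotheses through the induction, choosing basepoints for the inner detours) are real concerns for a direct proof, but they have already been addressed in Lemma~\ref{general_div_lemma}; citing the theorem avoids having to re-settle them.
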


To prove the above theorem, we will first need the following technical lemma: 

\begin{lemma} \label{general_div_lemma}
Let $(u, v)$ be as in Theorem \ref{coxeter_higher_div_thm}. Let $g \in W_{Star(u)}$ and $h \in W_{Star(v)}$ and $p \in W_{\Gamma}$ be words written in a minimal length expression. Suppose $|p| \le L_{\Gamma}$, $|g| \ge r$ and $|ph| \ge r$. Let $\alpha$ be a shortest path from $g$ to $ph$ in the Cayley graph of $W_{\Gamma}$ which does not intersect $B_{id}(r)$. It follows $|\alpha|$ is bounded below by a polynomial of degree $n$. 
\end{lemma}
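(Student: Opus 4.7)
I would induct on $n$, running a direct analogue of the proof of Theorem \ref{higher_degree_div_theorem} but with bands in a van Kampen diagram in place of dual curves in a combed disk diagram. Let $D$ be a van Kampen diagram for the null-homotopic loop $g \cdot \alpha \cdot h^{-1} \cdot p^{-1}$ in the Cayley graph of $W_\Gamma$. Apply Lemma \ref{general_ordering_lemma} to the minimal expression of $g$ to obtain a subsequence of edges $e_{i_1},\ldots,e_{i_m}$ of $g$, with $m \ge r/(L_\Gamma+1)$, whose labels $s_{i_j}\in\mathrm{Link}(u)$ are consecutively distinct. Since $u$ is $(n+1)$-locally even, each $s_{i_j}$ has only even incident edges, so $O_{s_{i_j}} = \{s_{i_j}\}$; hence the band $B_j$ through $e_{i_j}$ is labelled entirely by $s_{i_j}$, and because $g$ is geodesic it must terminate on $\alpha$, $p^{\pm 1}$, or $h$. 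The counting in both the base case and the inductive step rests on the observation that for a band labelled $s_{i_j}$ to terminate on $h\in W_{\mathrm{Star}(v)}$ one needs $s_{i_j}\in\mathrm{Star}(v)$.

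\textbf{Base case $n=1$.} The set $U := \mathrm{Link}(u)\cap\mathrm{Star}(v)$ is a clique in $\Gamma$ (two non-adjacent elements of $U$ would produce an induced square $\{u,*,v,*\}$, contradicting $(u,v)$ being rank $1$), and by $1$-local triangle-freeness of $u$ we conclude $|U|\le 1$. Since labels in our subsequence are consecutively distinct, at most $\lceil m/2\rceil$ of the $B_j$ can be labelled in $U$. The remaining bands must terminate on $\alpha\cup p^{\pm 1}$; as distinct bands have disjoint termination edges and $|p|\le L_\Gamma$, this gives $|\alpha|\ge m/2 - 2L_\Gamma$, a linear lower bound in $r$.

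\textbf{Inductive step.} Suppose the lemma is established up through rank $n-1$. By hypothesis any two distinct $s,s'\in\mathrm{Link}(u)$ form a rank $n-1$ pair, and a direct distance check confirms that $s$ is $(n-1)$-locally triangle-free and $n$-locally even while $s'$ is $1$-locally even, so the inductive hypothesis is applicable to $(s,s')$. A rank-style argument analogous to the base case shows that only $O(1)$ of the bands $B_j$ can terminate on $h$ or $p^{\pm 1}$; the remaining bands terminate on $\alpha$ at distinct edges $\varepsilon_j$. Let $\alpha_j$ denote the subpath of $\alpha$ between $\varepsilon_j$ and $\varepsilon_{j+1}$, and let $b_j$ be the vertex of $g$ immediately preceding $e_{i_j}$. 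Walking along the inner side of $B_j$ from $b_j$ to the endpoint of $\varepsilon_j$ spells a word in $W_{\mathrm{Star}(s_{i_j})}$; together with the at most $L_\Gamma$ letters of $g$ between $e_{i_j}$ and $e_{i_{j+1}}$ and the symmetric walk along $B_{j+1}$, we see that after translating by $b_j^{-1}$ the arc $\alpha_j$ runs from an element of $W_{\mathrm{Star}(s_{i_j})}$ to an element of the form $p_j' h_j'$ with $|p_j'|\le L_\Gamma$ and $h_j' \in W_{\mathrm{Star}(s_{i_{j+1}})}$, while avoiding the ball of radius $r_j := r - |b_j| \ge r - jL_\Gamma$ about the translated identity. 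The inductive hypothesis then bounds $|\alpha_j|$ below by a polynomial of degree $n-1$ in $r_j$, and summing $\sum_j (r - jL_\Gamma)^{n-1}$ over $j = 1,\ldots,\Omega(r)$ yields the desired degree-$n$ lower bound on $|\alpha|$.

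\textbf{Main obstacle.} The hardest part is the inductive step's identification of the endpoints of $\alpha_j$, after translation, with data of the form $(g', p'h')$ demanded by the lemma. This requires a careful cell-by-cell analysis of the band $B_j$: the $(n+1)$-local evenness of $u$ ensures each cell of $B_j$ is a $2m$-gon with $m$ even, so opposite edges of a cell through $B_j$ carry the same label $s_{i_j}$ and the two sides of the band are labelled by generators in $\mathrm{Star}(s_{i_j})$. Propagating this through the band realises the inner-side walk as a word in $W_{\mathrm{Star}(s_{i_j})}$, which is exactly what is needed to place the endpoints in the correct standard parabolic subgroups. Verifying this structural claim, and then the avoidance radius estimate $r_j \ge r - jL_\Gamma$ via the triangle inequality, is the heart of the argument.
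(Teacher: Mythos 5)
Your proposal follows the same overall strategy as the paper's proof: a van Kampen diagram for $g\alpha h^{-1}p^{-1}$, extraction of a subsequence of bands via Lemma~\ref{general_ordering_lemma}, use of $(n+1)$-local evenness to ensure bands are mono-labelled (hence pairwise disjoint, by $1$-local triangle-freeness of $u$), a count of how many bands can terminate on $p$ or $h$, and then, in the inductive step, breaking $\alpha$ into arcs $\alpha_j$ between consecutive bands and feeding the subdiagrams back into the lemma for the rank-$(n-1)$ pairs $(s_{i_j}, s_{i_{j+1}})$. Where you differ from the paper is only in degree of explicitness, not in substance: you spell out why the side-of-band words lie in $W_{\mathrm{Star}(s_{i_j})}$ (via the evenness structure of the band cells) so that the inductive hypothesis is genuinely applicable, and you track the avoidance radius $r_j \ge r - jL_\Gamma$ explicitly, details the paper leaves implicit when it defines $g_i$, $h_i$, $p_i$ and says the bound is "in $r-i$." Your base-case count ($|U|\le 1$, hence at most half the bands carry a label in $U$) is slightly more conservative than the paper's "$B_j$ intersects $\alpha$ for $j > L_\Gamma$," but both yield the linear bound; in fact a nesting argument for disjoint bands shows that the $h$-terminating bands form a contiguous block with alternating labels, so with $|U|\le 1$ there is at most one of them.
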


\begin{proof}
The proof will follow by induction on $n$. We begin with the base case where the rank of $(u,v)$ is $n=1$. Suppose $g$ is given by the following expression in generators, $g = s_1s_2...s_l$. Note that $l \ge r$. 	Let $D$ be a Van-Kampen with boundary path $g \alpha h^{-1} p^{-1}$. 

Let $T = \{s_{i_1}, s_{i_2}, ..., s_{i_m} \}$ be a subsequence of $\{s_1, s_2, ..., s_l\}$ as in Lemma \ref{general_ordering_lemma}, and $B = \{B_1, B_2, ..., B_m \}$ bands in $D$ corresponding to each letter in $T$. Since $u$ is $2$--locally even, each band $B_j$ only contains edges labeled by $s_{i_j}$. Furthermore, since $u$ is $1$-locally triangle free, $u$ is not contained in a triangle. It follows for $i \ne j$, $B_i$ and $B_j$ do not intersect. 

At most $L_{\Gamma}$ bands can intersect $p$. Additionally, $u$ and $v$ are rank 1, and so are not in a common square of $\Gamma$. It follows for $j > L_{\Gamma}$, $B_j$ intersects $\alpha$. Hence, $|\alpha|$ is linear in $r$, proving the base case. 

Now assume the theorem is true for $n-1$ and that $(u,v)$ are of rank $n$. The proof proceeds almost the same way as the base case. Consider all the same notation as the base case. For $i$, such that $L_{\Gamma} + 1 < i < m$, let $\alpha_i$ be the segment of $\alpha$ between $B_i$ and $B_{i+1}$, and let $p_i$ be the segment of $g$ from $B_i$ to $B_{i+1}$. Let $g_i$ be the word along $B_i$ from $p_i$ to $\alpha_i$, and let $h_i$ be the word along $B_{i+1}$ from $p_i$ to $\alpha_i$. By the induction hypothesis, $|\alpha_i|$ is bounded below by a polynomial of degree $n-1$ in $r -i$. Hence, $|\alpha|$ is bounded below by a polynomial of degree $n$. 
\end{proof}

\begin{proof} [Proof of Theorem \ref{coxeter_higher_div_thm} ]
Let $\alpha$ be a $B_{id}(r)$ avoidant path from $(uv)^r$ to $(vu)^r$ in the Cayley graph of $W_{\Gamma}$. Let $D$ be a Van-Kampen diagram with boundary path $(uv)^r \alpha (vu)^{-r}$. Since $(u,v)$ is a non-commuting pair in $\Gamma$, no pair of bands emanating from the words $(uv)^r$ or from $(vu)^r$ along the boundary path of $D$ can intersect. Hence, each of these bands must intersect $\alpha$. 

Write $(uv)^r$ as $u_1v_1u_2v_2...u_rv_r$. Let $U_i, V_i$ be bands corresponding respectively to $u_i, v_i$. Let $D_i$ be the minimal connected subdiagram of $D$ which includes $U_i$ and $V_i$. Let $\alpha_i$ be the segment of $\alpha$ contained in $D_i$. By Lemma \ref{general_div_lemma}, $|\alpha_i|$ is bounded below by a polynomial of degree $n$ in $r - i$. Hence, $|\alpha|$ is bounded below by a polynomial of degree $n + 1$.  

\end{proof}

\subsection{Quadratic Divergence Lower Bound}

\begin{definition}
Given an edge labeled Coxeter graph $\Gamma$, let $\hat{\Gamma}$ be the graph resulting from collapsing odd labeled edges in $\Gamma$ to a point. For $v \in \Gamma$ we denote its image in $\hat{\Gamma}$ by $\pi(v)$.  Each vertex $\hat{v} \in \hat{\Gamma}$ is labeled by a list, $\pi^{-1}(\hat{v})$. Each edge in $\hat{\Gamma}$ is labeled by the same integer as the corresponding edge in $\Gamma$. Note that this new graph can have multiple edges between two vertices. 
\end{definition}

\begin{theorem} \label{coxeter_quadratic_thm}
Let $\Gamma$ be a Coxeter graph. If the diameter of $\hat{\Gamma}$ is larger than 2, then $W_{\Gamma}$ has at least quadratic divergence. 
\end{theorem}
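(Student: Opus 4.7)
The plan is to find vertices $u, v \in \Gamma$ whose images in $\hat \Gamma$ realize the diameter, then use band arguments in a Van-Kampen diagram to show that the bi-infinite geodesic $\dots uvuv \dots$ in the Cayley graph exhibits quadratic divergence. Pick $\hat u, \hat v \in \hat \Gamma$ with $d_{\hat \Gamma}(\hat u, \hat v) \ge 3$ and lift them to $u, v \in \Gamma$. No edge of $\Gamma$ can join $u$ and $v$, since an odd edge would identify $\hat u = \hat v$ while an even edge would make them adjacent in $\hat \Gamma$; hence $m(u,v) = \infty$ and $\langle u, v\rangle$ is infinite dihedral. Similarly, no generator $w \in \Gamma$ can satisfy $m(w,u) = m(w,v) = 2$, since this would give a length-two path $\hat u \sim \hat w \sim \hat v$ in $\hat \Gamma$.

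Now fix $r$, let $\alpha$ be a combinatorial path from $(uv)^r$ to $(vu)^r$ avoiding $B_{\mathrm{id}}(r)$, and form a reduced Van-Kampen diagram $D$ with boundary $p_1 \cdot \alpha \cdot p_2^{-1}$, where $p_1 = (uv)^r$ and $p_2 = (vu)^r$. The first structural point is that the diameter hypothesis prohibits a band whose edges lie in $\pi^{-1}(\hat u)$ from intersecting a band whose edges lie in $\pi^{-1}(\hat v)$: such an intersection would occur in a $2$-cell bearing the relator $(st)^m$ with $s \in \pi^{-1}(\hat u)$, $t \in \pi^{-1}(\hat v)$, and $m < \infty$, but then $\{s,t\}$ would descend to an edge $\hat u \sim \hat v$ in $\hat \Gamma$. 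Because the reduced words $p_1, p_2$ live in $\langle u,v\rangle$, every band incident to them records a reflection of $\langle u, v\rangle$, and a direct calculation (parametrising reflections of $\langle u,v\rangle$ as $(uv)^k u$ for $k \in \mathbb{Z}$) shows that the reflections recorded by $p_1$ correspond to $k \in \{0, 1, \ldots, 2r-1\}$ while those recorded by $p_2$ correspond to $k \in \{-1, -2, \ldots, -2r\}$. These sets are disjoint in $W_\Gamma$, so no band emanating from $p_1 \cup p_2$ can have both endpoints on $p_1 \cup p_2$, forcing each of the $4r$ boundary bands to exit through $\alpha$ and yielding the linear bound $|\alpha| \succeq r$.

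To promote this linear bound to a quadratic one, I would decompose $\alpha$ recursively. For each $k \in \{1, \ldots, r\}$, the boundary bands from the $k$-th pair of edges $u_k^{(1)}, v_k^{(1)}$ on $p_1$ and their counterparts on $p_2$ cut out a subdiagram $D_k \subset D$ whose boundary contains a subpath $\alpha_k$ of $\alpha$. Since the outer bands of $D_k$ correspond to reflections that translate $\mathrm{id}$ to an element roughly $k$ steps along the axis of $uv$, the subpath $\alpha_k$ still avoids a ball of radius $r - O(k)$ about this translated basepoint. Rerunning the boundary-band count inside $D_k$ produces $|\alpha_k| \succeq r - O(k)$, and telescoping over $k$ gives $|\alpha| \succeq r^2$.

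The principal obstacle is rigorous control of the recursive step: although $\hat u$-type and $\hat v$-type bands do not cross, two bands of the same type can still intersect when $\hat u$ or $\hat v$ carries a loop in $\hat \Gamma$, so the planar structure of the subdiagrams $D_k$ is not automatic. The remedy should be to choose $D$ of minimal area and to adapt the bigon-elimination argument used for dual curves in Lemma~\ref{remove_intersecting_curves_lemma} to bands in Van-Kampen diagrams, at which point the recursion goes through and the quadratic lower bound on $Div(W_\Gamma)$ follows.
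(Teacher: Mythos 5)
Your plan is essentially the paper's: pick $u,v$ with $d_{\hat\Gamma}(\hat u,\hat v)>2$, show $m(u,v)=\infty$, run a band argument in a Van--Kampen diagram over the boundary $(uv)^r\,\alpha\,(vu)^{-r}$, and recurse to upgrade linear to quadratic. Two points of comparison. First, for the linear count you parametrize reflections of the infinite dihedral $\langle u,v\rangle$ explicitly; the paper instead notes that $U_i$ and $V_j$ cannot cross because a crossing cell would force an odd path from $u$ to some $u'$ and from $v$ to some $v'$ with $m(u',v')<\infty$, contradicting $d_{\hat\Gamma}(\hat u,\hat v)>2$. Both work; yours is a bit more explicit about why a band from $p_1$ cannot reattach to $p_2$. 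Second, and more importantly, your worry about same-type band crossings spoiling the planar recursion is real, but the paper avoids it entirely rather than cleaning it up with bigon elimination: instead of ``rerunning the count'' in $D_k$, the paper reads the boundary path $g=s_1\cdots s_m$ of the band $U_i$, observes that each $\hat s_j$ is within $\hat\Gamma$-distance $1$ of $\hat u$ (since $s_j\in Star(t)$ for some $t$ odd-connected to $u$), and concludes that the band through each $s_j$ cannot reach $V_i$ because that would give $d_{\hat\Gamma}(\hat u,\hat v)\le 2$. So each of those $m\ge r-i$ bands must cross $\alpha_i$, giving the linear lower bound on $|\alpha_i|$ directly, without any diagram surgery. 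You should adopt this cleaner route for the recursive step; your proposed bigon-elimination detour is unnecessary here.
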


\begin{proof}
Suppose $d_{\hat{\Gamma}}(\hat{u}, \hat{v}) > 2$ for some $\hat{u}, \hat{v} \in \hat{\Gamma}$. Choose	 $u \in \pi^{-1}(\hat{u})$ and $v \in \pi^{-1}(\hat{v})$. It follows that $m(u,v) = \infty$. We will show that the bi-infinite geodesic $...uvuv...$ exhibits at least quadratic divergence. 

Let $\alpha$ be a shortest $B_{id}(2r)$--avoidant path from $(uv)^{r}$ to $(vu)^r$. Let $D$ be a Van-Kampen diagram with boundary path $(uv)^r\alpha (vu)^{-r}$. Write $(uv)^r = u_1v_1u_2v_2...u_rv_r$. Let $U_i$ denote the band emanating from $u_i$ and $V_i$ the band emanating from $v_i$. Note that for any $i, j$, $U_i$ cannot intersect $V_j$. For then, there would be an odd path in $\Gamma$ from $u$ to some $u'$, and an odd path from $v$ to some $v'$, so that $m(u',v') \ne \infty$. However, this would imply $d_{\hat{\Gamma}}(\hat{u}, \hat{v}) \le 1$, a contradiction. 

Fix $i$. Let $D_i$ denote the minimal connected subdiagram of $D$ containing both $U_i$ and $V_i$, and let $\alpha_i$ be the subsegment of $\alpha$ contained in $D_i$. Let $g = s_1...s_m$ be the word along the boundary path of $U_i$ from $u_i$ to $\alpha_i$. It follows $m \ge r - i$. Furthermore, $s_i \in A = \{ Star(t) ~ | ~ t \in \pi^{-1}(\hat{u}) \}$. Note that $d_{\hat{\Gamma}}(\hat{u}, \hat{t}) \le 1$ for $t \in A$. Let $S_j$ be the band in $D_i$ emanating from $s_j$. It follows $S_j$ cannot intersect $V_i$. For then $d_{\hat{\Gamma}}(\hat{u}, \hat{v}) \le 2$. Hence $S_j$ intersects $\alpha_i$ for each $j$. It follows, $|\alpha_i|$ is at least linear in $r - i$. Hence, $|\alpha|$ is at least quadratic in $r$.

\end{proof}

\begin{corollary} \label{coxeter_quadratic_corollary}
Let $W_{\Gamma}$ be an even Coxeter group. If $diam(\Gamma) > 2$, then $Div(W_{\Gamma})$ is at least quadratic. 
\end{corollary}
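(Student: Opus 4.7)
The plan is to reduce Corollary \ref{coxeter_quadratic_corollary} directly to Theorem \ref{coxeter_quadratic_thm} by observing that the quotient construction $\hat{\Gamma}$ does nothing when $\Gamma$ is already even.

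First I would recall the definition of an even Coxeter group: the labeled Coxeter graph $\Gamma$ has every edge carrying either an even integer label $m(s,t) \ge 4$ or no label at all (which is the convention for $m(s,t) = 2$). In particular, $\Gamma$ contains no odd-labeled edges. The construction of $\hat{\Gamma}$ from $\Gamma$ consists precisely of homotoping each odd-labeled edge to a point; since there are no such edges to begin with, the quotient map is the identity and $\hat{\Gamma} = \Gamma$ as labeled graphs. Consequently, $\mathrm{diam}(\hat{\Gamma}) = \mathrm{diam}(\Gamma)$.

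Under the hypothesis $\mathrm{diam}(\Gamma) > 2$, we therefore have $\mathrm{diam}(\hat{\Gamma}) > 2$, so Theorem \ref{coxeter_quadratic_thm} applies and yields that $Div(W_{\Gamma})$ is at least quadratic. No further counting or diagram argument is required beyond the one already performed inside Theorem \ref{coxeter_quadratic_thm}. Since the argument is a one-line deduction, there is no genuine obstacle; the only thing to check is the compatibility of conventions, namely that unlabeled edges (representing commuting generators, $m = 2$) are regarded as even rather than odd, which is the standard and stated convention in the paper.
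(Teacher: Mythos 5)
Your proof is correct and matches the paper's intent exactly: the paper leaves this corollary without a written proof precisely because, as you observe, an even Coxeter graph has no odd-labeled edges, so $\hat{\Gamma} = \Gamma$ and Theorem \ref{coxeter_quadratic_thm} applies immediately. Nothing more to add.
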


\bibliographystyle{amsalpha}
\bibliography{mybibliography}

\end{document}